\documentclass{amsart}
\usepackage{microtype}
\usepackage[english]{babel}
\usepackage{hyperref}
\usepackage[all]{xy}
\usepackage{bm}
\usepackage{xkeyval}
\usepackage{graphicx}
\usepackage{amssymb,amscd,verbatim, amsthm,amsmath,amsgen,xspace}
\usepackage{latexsym}
\usepackage{amsfonts}
\usepackage{color}
\usepackage{ulem}
\usepackage{exscale}
\usepackage{comment}

\usepackage{ytableau}

\usepackage{amsbsy}

\newtheorem{prop}[equation]{Proposition}
\newtheorem*{prop*}{Proposition}
\newtheorem{thm}[equation]{Theorem}
\newtheorem*{thm*}{Theorem}
\newtheorem{lem}[equation]{Lemma}
\newtheorem*{lem*}{Lemma}

\newtheorem*{kor*}{Corollary}
\newtheorem{cor}[equation]{Corollary}
\newtheorem{rem}[equation]{Remark}

\newtheorem{ex}[equation]{Example}

\numberwithin{equation}{section}

\newcommand{\fre}{\mathfrak{e}}

\newcommand{\frg}{\mathfrak{g}}

\newcommand{\frk}{\mathfrak{k}}

\newcommand{\frp}{\mathfrak{p}}

\newcommand{\fru}{\mathfrak{u}}

\newcommand{\frgl}{\mathfrak{gl}}
\newcommand{\frsl}{\mathfrak{sl}}
\newcommand{\frsu}{\mathfrak{su}}
\newcommand{\frso}{\mathfrak{so}}
\newcommand{\frsp}{\mathfrak{sp}}

\newcommand{\YD}{\operatorname{YD}}

\newcommand{\bbar}{\,|\,}

\def\bbC{\mathbb{C}}

\def\bbR{\mathbb{R}}
\def\bbZ{\mathbb{Z}}

\let\ccdot\cdot
\def\cdot{\hbox to 2.5pt{\hss$\ccdot$\hss}}

\newcommand{\eq}{\begin{equation}}
	\newcommand{\eeq}{\end{equation}}
\newcommand{\eqn}{\begin{equation*}}
	\newcommand{\bmul}{\begin{multline*}}
		\newcommand{\eemul}{\end{multline*}}
	\newcommand{\eeqn}{\end{equation*}}
\newcommand{\pf}{\begin{proof}}
	\newcommand{\epf}{\end{proof}}

\newcommand{\la}{\lambda}

\renewcommand{\phi}{\varphi}

\newcommand{\La}{\Lambda}

\newcommand{\eps}{\varepsilon}
\newcommand{\half}{\frac{1}{2}}
\newcommand{\dom}{\operatorname{dom}}

\let\ssize\scriptstyle
\newif\ifFIRST\newdimen\MAXright\MAXright0pt
\def\sdynkin{\bgroup\eightpoint\dynkin}
\def\endsdynkin{\enddynkin\egroup}
\def\dynkin{\bgroup\FIRSTtrue\hskip.5em\setbox1\hbox{$\diagup$}%
	\setbox2\hbox{$\diagdown$}%
	\setbox0\hbox to2\wd1{\hrulefill}%
	\setbox3\hbox{$\bullet$}%
	\setbox4\hbox{$\times$}%
	\setbox7\hbox{$\circ$}
	\def\whiteroot##1{\ifFIRST\setbox5\hbox{$##1$}\ifdim\wd5>1.3em
		\hskip-.5em\hskip.5\wd5\fi\fi\FIRSTfalse
		\hskip-.25em\raise1.5\wd3\hbox to0pt{\hss\hskip.45em$
			\ssize##1$\hss}\copy7\hskip-.25em\setbox6\hbox{$##1$}
		\MAXright\wd6}
	\def\root##1{\ifFIRST\setbox5\hbox{$##1$}\ifdim\wd5>1.3em%
		\hskip-.5em\hskip.5\wd5\fi\fi\FIRSTfalse%
		\hskip-.25em\raise1.5\wd3\hbox to0pt{\hss\hskip.45em$%
			\ssize##1$\hss}\copy3\hskip-.25em\setbox6\hbox{$##1$}%
		\MAXright\wd6}%
	\def\whitedroot##1{\ifFIRST\setbox5\hbox{$##1$}\ifdim\wd5>1.3em
		\hskip-.5em\hskip.5\wd5\fi\fi\FIRSTfalse
		\hskip-.25em\lower1.8\wd3\hbox to0pt{\hss\hskip.45em$
			\ssize##1$\hss}\copy7\hskip-.25em\setbox6\hbox{$##1$}
		\MAXright\wd6}%
	\def\whiterroot##1{\hskip-.25em\copy7\hbox to0pt{\hskip.3em$\ssize##1$\hss}%
		\hskip-.25em\setbox6\hbox{\hskip.6em$##1##1$}%
		\MAXright\wd6}%
	\def\droot##1{\ifFIRST\setbox5\hbox{$##1$}\ifdim\wd5>1.3em%
		\hskip-.5em\hskip.5\wd5\fi\fi\FIRSTfalse%
		\hskip-.25em\lower1.8\wd3\hbox to0pt{\hss\hskip.45em$%
			\ssize##1$\hss}\copy3\hskip-.25em\setbox6\hbox{$##1$}%
		\MAXright\wd6}%
	\def\rroot##1{\hskip-.25em\copy3\hbox to0pt{\hskip.3em$\ssize##1$\hss}%
		\hskip-.25em\setbox6\hbox{\hskip.6em$##1##1$}%
		\MAXright\wd6}%
	\def\norroot##1{\hskip-.36em\copy4\hbox to0pt{\hskip.3em$\ssize##1$\hss}%
		\hskip-.48em\setbox6\hbox{\hskip.6em$##1##1$}%
		\MAXright\wd6}%
	\def\noroot##1{\ifFIRST\setbox5\hbox{$##1$}\ifdim\wd5>1.3em%
		\hskip-.5em\hskip.5\wd5\fi\fi\FIRSTfalse%
		\hskip-.36em\raise1.5\wd3\hbox to0pt{\hss\hskip.6em$%
			\ssize##1$\hss}\copy4\hskip-.38em\setbox6\hbox{$##1$}%
		\MAXright\wd6}%
	\def\nodroot##1{\ifFIRST\setbox5\hbox{$##1$}\ifdim\wd5>1.3em%
		\hskip-.5em\hskip.5\wd5\fi\fi\FIRSTfalse%
		\hskip-.36em\lower1.8\wd3\hbox to0pt{\hss\hskip.6em$%
			\ssize##1$\hss}\copy4\hskip-.38em\setbox6\hbox{$##1$}%
		\MAXright\wd6}%
	\def\nolink{\hskip\wd0}
	\def\link{\raise.22em\copy0}%
	\def\llink##1{\raise.32em\copy0\hskip-\wd0%
		\raise.12em\copy0\hskip-.5\wd0\hbox to0pt{\hss$##1$\hss}\hskip.5\wd0}%
	\def\lllink##1{\raise.22em\copy0\hskip-\wd0\raise.32em\copy0\hskip-\wd0%
		\raise.12em\copy0\hskip-.5\wd0\hbox to0pt{\hss$##1$\hss}\hskip.5\wd0}%
	\def\rootupright##1{\hbox to0pt{\raise.45em\copy1\hskip-.25em\raise1.3\ht1%
			\hbox{\copy3\hskip.3em$\ssize##1$}\hss}%
		\setbox6\hbox{\hskip.6em\copy1\copy1$##1##1$}%
		\ifdim\MAXright<\wd6\MAXright\wd6\fi}%
	\def\whiterootupright##1{\hbox to0pt{\raise.45em\copy1\hskip-.25em\raise1.3\ht1
			\hbox{\copy7\hskip.3em$\ssize##1$}\hss}
		\setbox6\hbox{\hskip.6em\copy1\copy1$##1##1$}
		\ifdim\MAXright<\wd6\MAXright\wd6\fi}
	\def\norootupright##1{\hbox to0pt{\raise.45em\copy1\hskip-.36em\raise1.3\ht1%
			\hbox{\copy4\hskip.3em$\ssize##1$}\hss}%
		\setbox6\hbox{\hskip.6em\copy1\copy1$##1##1$}%
		\ifdim\MAXright<\wd6\MAXright\wd6\fi}%
	\def\rootdownright##1{\hbox to0pt{\raise-.5em\copy2\hskip-.25em\raise-1.35\ht1%
			\hbox{\copy3\hskip.3em$\ssize##1$}\hss}\setbox6%
		\hbox{\hskip.6em\copy2\copy2$##1##1$}%
		\ifdim\MAXright<\wd6\MAXright\wd6\fi}%
	\def\whiterootdownright##1{\hbox to0pt{\raise-.5em\copy2\hskip-.25em\raise-1.35\ht1
			\hbox{\copy7\hskip.3em$\ssize##1$}\hss}\setbox6
		\hbox{\hskip.6em\copy2\copy2$##1##1$}
		\ifdim\MAXright<\wd6\MAXright\wd6\fi}
	\def\rootdown##1{\hbox to0pt{\hskip-.05em\vrule height.25em depth.65em%
			\hskip-.25em\raise-.95em\hbox{\copy3\hskip.3em$\ssize##1$}\hss}%
		\setbox6\hbox{$##1$}%
		\ifdim\MAXright<\wd6\MAXright\wd6\fi}%
	\def\whiterootdown##1{\hbox to0pt{\hskip-.05em\vrule height.25em depth.65em
			\hskip-.25em\raise-.95em\hbox{\copy7\hskip.3em$\ssize##1$}\hss}
		\setbox6\hbox{$##1$}
		\ifdim\MAXright<\wd6\MAXright\wd6\fi}
	\def\dots{\hskip.5em\cdots\hskip.5em}}%
\def\enddynkin{\ifdim\MAXright>1em\hskip.5\MAXright\else\hskip.5em\fi\egroup}%

\begin{document} 
	
	\title[Fixed infinitesimal character]{Unitary highest weight modules for $\mathfrak{su}(p, q)$ and $\mathfrak{so}^{*}(2n)$ with fixed integral infinitesimal character}
	\author{Pavle Pand\v zi\'c}
	\address[Pand\v zi\'c]{Department of Mathematics, Faculty of Science, University of Zagreb, Bijeni\v cka 30, 10000 Zagreb, Croatia}
	\email{pandzic@math.hr}
	\author{Ana Prli\'c}
	\address[Prli\'c]{Department of Mathematics, Faculty of Science, University of Zagreb, Bijeni\v cka 30, 10000 Zagreb, Croatia}
	\email{anaprlic@math.hr}
	\author{Vladim\'{\i}r Sou\v cek}
	\address[Sou\v cek]{Matematick\'y \'ustav UK, Sokolovsk\'a 83, 186 75 Praha 8, Czech Republic}
	\email{soucek@karlin.mff.cuni.cz}
	\author{V\'it Tu\v cek}
	\address[Tu\v cek]{Department of Mathematics, Faculty of Science, University of Zagreb, Bijeni\v cka 30, 10000 Zagreb, Croatia}
	\email{vit.tucek@gmail.com}
	\date{}
	\thanks{P.~Pand\v zi\'c and A.~Prli\'c were supported by the QuantiXLie Center of Excellence, a project co-financed by the Croatian Government and European Union through the European Regional Development Fund - the Competitiveness and Cohesion Operational Programme (grant PK.1.1.02.0004), by the Croatian Science Foundation (HRZZ), grant no. IP-2025-02-6514, and
		by the European Union – NextGenerationEU through the National Recovery and Resilience Plan 2021-2026. Institutional grant of University of Zagreb Faculty of Science (IK IA 1.1.3. Impact4Math). V.~Tuček was supported by the QuantiXLie Center of Excellence and by the grant GX19-28628X.
		V.~Sou\v cek was supported by the grants GX19-28628X and GA24-10887S of GAČR}
	\subjclass[2010]{primary: 22E47}
	\keywords{}
	\begin{abstract} 
		{We classify  unitary highest weight modules with a given integral infinitesimal character for the real Lie algebras $\frsu(p,q)$ and $\frso^*(2n)$. We treat both regular and singular cases. For $\frsu(p,q)$ we identify the unitarizable modules in the Hasse diagrams of the highest weight orbit. Analogous results for the other Hermitian Lie algebras were given in our earlier publications.}
	\end{abstract}

	\maketitle
	
	\section{Introduction}
	
	Studying unitary highest weight modules is a classical topic in representation theory of simple real Lie groups. Typical examples of unitary highest weight modules include holomorphic discrete series representations, but also interesting small representations like the Weil representation of the metaplectic group. These modules appear in many applications, and they are also good examples of unitary modules because they are very concrete, unlike some other families of modules which are rather abstract.
	
	Unitary highest weight modules were first studied by Harish-Chandra \cite{HC1,HC2}, who constructed the holomorphic discrete series and proved that nontrivial unitary highest weight modules exist precisely when $G$ is of Hermitian type, i.e., the symmetric space $G/K$  admits a Hermitian structure (here $K$ is a maximal compact subgroup of $G$). 
	
	In the 1970s several authors were working towards classification of unitary highest weight modules; see for example \cite{KV}, \cite{RV},  \cite{W} and \cite{P2}. The full classification was obtained in 1983, independently by Enright-Howe-Wallach \cite{EHW} and Jakobsen \cite{J}. This did not close the topic and many authors continued to investigate various features of unitary highest weight modules, including \cite{A}, \cite{BGG}, \cite{DES}, \cite{E}, \cite{EJ}, \cite{ES}, \cite{EW}, \cite{HPP}, \cite{HPZ}, \cite{NOT} and \cite{Sa}. 
	
	Recently we revisited the issue of classification, in part joint with G.Savin,  in \cite{PPST1,PPST2,PPST3,PPSST}.
	Our approach differs from the previous ones in that we make more use of the Dirac inequality, and less use of the $K$-type structure of highest weight modules. The Dirac inequality of Parthasarathy, \cite{P1,P2}, is a basic tool in all of the classification schemes, but \cite{EHW} and \cite{J} use it in a minimal fashion, while we use it to the full extent. 
	Some details about this inequality and its refinements can be found in \cite{HP1,HP2}.
	
	Other basic tools used to some extent in all approaches to the classification include Howe's theory of dual pair correspondences \cite{H1,H2,H3} as well as Schmid's description of $K$-types
	in $S(\frp^-)$  \cite{Sch}, where $\frg=\frk\oplus\frp^+\oplus\frp^-$ is the decomposition of the complexified Lie algebra of $G$ into eigenspaces for a nontrivial element of the (one-dimensional) center of the complexified Lie algebra $\frk$ of $K$. 
	
	Our main motivation for trying to get a better understanding of the classification of unitary highest weight modules is the fact that the classifications of \cite{EHW} and \cite{J} are expressed in terms of certain lines in the dual Cartan algebra, with infinitesimal character varying along each line. It is thus not immediately clear how to obtain a more usual classification scheme where one 
	first fixes an infinitesimal character, and then tries to classify  modules with that  infinitesimal character.  We addressed and resolved this issue when the Lie algebra $\frg_0$ of $G$ is $\frsp(2n,\bbR)$ in \cite{PPST3}, and when $\frg_0$ is $\frso(2,n)$, $\fre_{6(-14)}$, or $\fre_{7(-25)}$ in \cite{PPSST}.
	
	In this paper we study the remaining cases, $\frg_0=\frsu(p,q)$ (in Section 2) and $\frg_0=\frso^*(2n)$ (in Section 3). We restrict our attention to the case of integral infinitesimal character, because the most interesting modules in these cases have integral infinitesimal characters (these are the discrete points in the classification, i.e., those modules that are not full generalized Verma modules). 
	
	Throughout the paper we work with the concept of the parameter of a highest weight module $L(\la)$ with highest weight $\la$. This is a special conjugate of the infinitesimal character, which is equal to $\La=\la+\rho$, where $\rho$ is the half sum of positive roots for the root system which is used to define the concept of highest weight modules. (Here, as usual, infinitesimal characters are expressed in terms of elements of the dual Cartan algebra modulo the Weyl group, using results of Harish-Chandra.) By \emph{string} we mean a sequence of numbers which decrease by one, i.e. $(x, x-1, \ldots, x-y)$.
	
	Highest weights and parameters are clearly in bijection, so we can identify the module $L(\la)$ with its parameter $\La$, and talk about ``unitary parameters". Each of our main results, Theorem \ref{unitarity-regular-su(p, q)}, Theorem \ref{unit reg}, Theorem \ref{su(p,q)-singular}, 
	Theorem \ref{so*-zeroin}, Theorem \ref{so*-zeroout}, Theorem \ref{so*-halfinteger-onehalfoout} and Theorem \ref{so*-halfinteger-onehalfin}, gives a list of unitary parameters $\La$ conjugate to a fixed dominant representative $\La^{\dom}$ of the infinitesimal character.

	\section{The case of $\frsu(p,q),\ p \leq q$}
	\label{sec:su(p,q)}
	
	In this case, the complexification of the Lie algebra $\frg_0 = \frsu(p, q)$ is $\frg = \frsl(p + q, \bbC)$ and $\frk = s(\frgl(p, \bbC) \oplus \frgl(q, \bbC))$. We use standard coordinates and the standard positive root system. The positive compact roots are
	\[
	\eps_i - \eps_j, \quad 1 \leq i < j \leq p \text{ or } p+1 \leq i < j \leq n,
	\]
	where $n = p + q$, and the positive noncompact roots are
	\[
	\eps_i - \eps_j, \quad 1 \leq i \leq p \text{ and } p+1 \leq j \leq n.
	\]
	Therefore, the half sum of positive roots is 
	\[
	\rho = \left ( \frac{n-1}{2}, \frac{n-3}{2},  \ldots,  - \frac{n-1}{2} \right). 
	\]
	By definition, the parameter of the irreducible highest weight 
	$(\mathfrak{g}, K)$-module $L(\lambda)$ is
	\[
	\Lambda = \lambda + \rho.
	\]
	
	If $\lambda$ is $\mathfrak{g}$-dominant, then any irreducible highest weight 
	$(\mathfrak{g}, K)$-module with the same infinitesimal character as $L(\lambda)$ 
	is of the form 
	\[
	L\bigl(w(\lambda + \rho) - \rho\bigr)
	\]
	for some 
	\[
	w \in W^{1} = \{\, w \in W_{\mathfrak{g}} \mid w\rho \ \text{is } \mathfrak{k}\text{-dominant} \,\}.
	\]
	The corresponding parameter is $w(\lambda + \rho)$.
	
	As in \cite{PPST3} and \cite{PPSST}, we will denote by $\La^{\dom}$ the $\frg$--dominant representative of the infinitesimal character $\La$, i. e., we have
	\[
	\La^{\dom}_1 \geq \La^{\dom}_2 \geq \ldots \geq \La^{\dom}_n. 
	\]
	All parameters $\La$ corresponding to highest weight $(\frg, K)$-modules must be dominant regular for $\frk$. Therefore, we have
	\[
	\La_1 > \La_2 > \ldots > \La_p; \quad \La_{p + 1} > \La_{p + 2} > \ldots > \La_{n}.
	\]
	The integrality of the parameter $\Lambda^{\mathrm{dom}}$ means that $\La_i - \La_j \in \mathbb{Z} \quad 1 \leq i < j \leq n$.
	Theorems \cite[Theorem 4.6. and Theorem 4.9.]{PPST3} imply that $L(\la)$ is unitary if and only if $\la$ is of the form
	\[
	\la=(\underbrace{\la_1,\dots,\la_1}_{p'},\la_{p'+1},\dots,\la_p\bbar\la_{p+1},\dots,\la_{n-q'},\underbrace{\la_n,\dots,\la_n}_{q'}),
	\]
	where $p'$ and $q'$ are integers such that $1 \leq p' \leq p, 1 \leq q' \leq q$ and $\lambda_1-\la_n \leq  -n+p'+q'-\min(p',q')+1=-n+\max(p',q')+1$ or $\lambda_1-\la_n = -n+p'+q'-i$ for some integer $i\in[0,\min(p',q')-1]$. Since we assumed $\la_1 - \la_n \in \bbZ$, $L(\la)$ is unitary if and only if $\la_1 - \la_n \leq -n + p' + q'$.
	In terms of $\La = \la + \rho$, $L(\la)$ is unitary if and only if $\La$ is of the form
	\begin{align*}
		\La = (&\underbrace{\La_1, \La_1 - 1, \dots,\La_1 - (p' - 1)}_{p'},\, \La_{p'+1}, \dots, \La_p \bbar \\
		& \La_{p+1}, \dots, \La_{n - q'},\, \underbrace{\La_n + (q' - 1), \dots, \La_n + 1, \La_n}_{q'}).
	\end{align*}
	where $\La_{p'+1} < \La_1 - p', \La_{n - q'} > \La_n + q'$ and 
	\[\La_1-\La_n \leq  p' + q' - 1.\] The last condition can be written equivalently as
	\eq\label{unit cond}
	\La_{p'} - \La_{n - q' + 1} \leq 1.
	\eeq
	This description of unitarity will be used throughout this section. We will call the first $p'$ coordinates of $\La$ $p'$--string and the remaining coordinates $(\La_{p'+1}, ..., \La_p)$ will be called $(p-p')$--part and analogously for the $q'$--string and $(q-q')$--part.
	
	\subsection{Regular case}
	
	First, we consider the case when $\La^{\dom}$ is regular. This means that no coordinates of $\La$ repeat.

	\begin{thm} \label{unitarity-regular-su(p, q)}
		Suppose $\Lambda^{\dom}$ is regular integral. Then:
		
		\begin{itemize}
			\item [(a)]   If $\La^{\dom}_{q} = \La^{\dom}_{q+1} + 1$, then there exists $1\leq s \leq q$, $q + 1 - s \leq r \leq n-s$ such that $\La^{\dom}$ is of the form \eqref{eq:dom-string}. The unitary conjugates of $\Lambda^{\dom}$ are 
			\[
			\La = (\La_{q+1}^{\dom}, \La_{q+2}^{\dom}, \ldots, \La_{n}^{\dom} \, | \, \La_{1}^{\dom}, \ldots ,\La_{q}^{\dom})
			\]
			and  
			\begin{align*}
				\Lambda = (&\La^{\dom}_{s} - a, \ldots, \La^{\dom}_{s} - b; \La^{\dom}_{s} - c - 1, \ldots, \La^{\dom}_{s} - r;
				\Lambda^{\mathrm{dom}}_{s+r+1}, \ldots, \Lambda_{n}^{\mathrm{dom}} \, | \nonumber\\
				&\Lambda^{\mathrm{dom}}_{1}, \ldots ,\Lambda^{\mathrm{dom}}_{s-1}; 
				\La^{\dom}_{s}, \ldots, \La^{\dom}_{s} - a + 1; \La^{\dom}_{s} - b - 1, \ldots, \La^{\dom}_{s} - c), 
			\end{align*}
			where $0 \leq a \leq q - s$, $0 \leq b-a \leq (s + r - 1) - q$ and $c = b - a - s + q + 1$. In this case $\La_{p'} = \La^{\dom}_{s} - b$, $\La_{n - q' + 1} = \La^{\dom}_{s} - b - 1$, $p' = b - a + 1$, and $q' = c-b$. If $a = 0$, then the string $\La^{\dom}_{s}, \ldots, \La^{\dom}_{s} - a + 1$ in \eqref{unitary_string} is empty. If $c = r$ then the string $\La^{\dom}_{s} - c - 1, \ldots, \La^{\dom}_{s} - r$ in \eqref{unitary_string} is empty.
			\item [(b)] If $\La^{\dom}_{q} \neq \La^{\dom}_{q+1} + 1$, then the only unitary conjugate is 
			\[
			\La = (\La_{q+1}^{\dom}, \La_{q+2}^{\dom}, \ldots, \La_{n}^{\dom} \, | \, \La_{1}^{\dom}, \ldots ,\La_{q}^{\dom}).
			\]
		\end{itemize}
	\end{thm}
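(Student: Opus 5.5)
Throughout, the only tool is the unitarity criterion derived above. A parameter $\La$, which is $\frk$-dominant regular, is unitary exactly when it has a $p'$-string $\La_1,\dots,\La_1-(p'-1)$ and a $q'$-string $\La_n+(q'-1),\dots,\La_n$ with $\La_{p'}-\La_{n-q'+1}\le 1$. The plan is therefore purely combinatorial: list the permutations of the entries of $\La^{\dom}$ that are strictly decreasing on the first $p$ and on the last $q$ coordinates, then test the criterion. By regularity all entries are distinct, so $\La_{p'}\ne\La_{n-q'+1}$. Hence the criterion splits into two cases: either $\La_{p'}<\La_{n-q'+1}$, or $\La_{p'}=\La_{n-q'+1}+1$.

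\emph{Case $\La_{p'}<\La_{n-q'+1}$.} I claim this forces $\La$ to be the extreme parameter $(\La^{\dom}_{q+1},\dots,\La^{\dom}_n\,|\,\La^{\dom}_1,\dots,\La^{\dom}_q)$. The idea is that every left entry should sit below every right entry. Left entries in the $(p-p')$-part lie below $\La_{p'}$. Right entries in the $(q-q')$-part lie above $\La_{n-q'+1}$. The point to check is whether the maximality of the strings prevents a left entry from exceeding a right one.
\begin{itemize}
\item Suppose some $\La_i$ with $i\le p$ exceeds some $\La_j$ with $j>p$.
\item The $p'$-string occupies $[\La_{p'},\La_1]$ and the $q'$-string occupies $[\La_n,\La_{n-q'+1}]$.
\item Since $\La_1>\La_{p'}$ and $\La_{p'}<\La_{n-q'+1}$, the inequality $\La_1-\La_n\le p'+q'-1$ gives too few integers to place all the entries of both strings disjointly.
\end{itemize}
So I first reduce to $\La_1<\La_n$, since that is what the criterion really requires once the strings are disjoint. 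Then $\La_i\le\La_1<\La_n\le\La_j$ for all $i\le p<j$. This yields exactly the extreme parameter, and conversely the extreme parameter satisfies the criterion with $p'=q'=1$. This settles part (b) once we show the second case needs $\La^{\dom}_q=\La^{\dom}_{q+1}+1$.

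\emph{Case $\La_{p'}=\La_{n-q'+1}+1$.} Here the $p'$-string and the $q'$-string concatenate into a single string of length $p'+q'$ running from $\La_1$ down to $\La_n$. Every other entry of $\La$ lies outside this interval.
\begin{itemize}
\item Left entries of the $(p-p')$-part lie below $\La_n$; they cannot lie inside the interval, and they lie below $\La_{p'}$.
\item Right entries of the $(q-q')$-part lie above $\La_1$.
\end{itemize}
So $\La^{\dom}$ contains a maximal-enough string. We write it, as in \eqref{eq:dom-string}, as the string $\La^{\dom}_s,\dots,\La^{\dom}_s-r$ with $s$ its starting index. The $q$ right entries then consist of the $s-1$ entries above the string, plus a union of $q-s+1$ string entries. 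By $\frk$-dominance these string entries must form a top segment $\La^{\dom}_s,\dots,\La^{\dom}_s-a+1$ together with the block ending at $\La^{\dom}_s-c$, which is the $q'$-string. Similarly the left entries consist of the $p'$-string, the remaining string tail, and the entries below the string.

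Counting positions gives $q'=c-b$, $p'=b-a+1$ and $c=b-a-s+q+1$, together with the ranges for $a$ and $b$. The condition $\La^{\dom}_q=\La^{\dom}_{q+1}+1$ comes out as the statement that the split point $q$ lies inside the string, i.e. $s\le q\le s+r-1$. The converse, that each listed $\La$ satisfies the criterion, is immediate.

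The main obstacle is the bookkeeping in this second case. One must verify that the string must be contiguous in $\La^{\dom}$, and that the leftover string entries split exactly into a top segment going right and a bottom tail going left. In particular the bound $r\le n-s$ and the constraint on $b-a$ must be shown to match precisely the condition that both sides have the right sizes $p$ and $q$. I would handle this by fixing $a$ (the number of top string entries on the right) and deducing everything else by cardinality.
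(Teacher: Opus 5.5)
Your proposal is correct and follows essentially the same route as the paper. Regularity splits the criterion into two cases. In the first, $\La_{p'}<\La_{n-q'+1}$ forces $\La_1<\La_n$ and hence the extreme parameter. In the second, $\La_{p'}=\La_{n-q'+1}+1$ glues the two strings into one string occupying positions $q-q'+1,\dots,q+p'$ of $\La^{\dom}$ inside the maximal string \eqref{eq:dom-string}, and the family is then read off by counting positions. The one soft spot is your ``too few integers'' justification of $\La_1<\La_n$, which does not work as a counting argument; the clean reason is that the disjoint integer intervals $[\La_{p'},\La_1]$ and $[\La_n,\La_{n-q'+1}]$ with $\La_{p'}<\La_{n-q'+1}$ would have to intersect if $\La_n\le\La_1$.
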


	\begin{proof}
		If $\La \in W^1 \La^{\dom}$ is unitary, then the unitarity condition \eqref{unit cond} and the fact that $\La$ has no repeated coordinates imply that  
		\[
		\text{ either } \La_{p'} < \La_{n - q' + 1}, \text{ or } \La_{p'} = \La_{n - q' + 1} + 1.
		\]
		If $\La_{p'} < \La_{n - q' + 1}$, then, since $\La$ has no repeated coordinates, we must have $\La_1 < \La_n$. Therefore, we have
		\[
		\La_{p+1} > \ldots > \La_{n - q'} >\La_{n - q' + 1} > \ldots > \La_n > \La_1 > \ldots > \La_{p'} > \La_{p' + 1} > \ldots > \La_p.
		\]
		and we see that $\La_n = \La_{q}^{\dom} \neq \La_{q+1}^{\dom}+1.$
		Since the coordinates of $\La$ are obtained by permuting those of $\La^{\dom}$, we have
		\eq
		\label{tilde La} 
		\La = (\La_{q+1}^{\dom}, \La_{q+2}^{\dom}, \ldots, \La_{n}^{\dom} \, | \, \La_{1}^{\dom}, \ldots ,\La_{q}^{\dom}).
		\eeq

		If $\La_{p'} = \La_{n - q' + 1} + 1$, then the sequence $\La_1, \ldots, \La_{p'}, \La_{n - q' + 1}, \ldots, \La_n$ forms a string (a sequence of integers descending by $1$).
		Since $\La_{p'} > \La_{p'+1}$ and $\La$ has no repeated coordinates, we have $\La_{p'+1} < \La_n$. Similarly, since  $\La_{n - q'} > \La_{n - q' + 1}$ and  $\La$ has no repeated coordinates, we have $\La_{n - q'} > \La_1$. Therefore, we have
		\[
		\La_{p+1} > \ldots > \La_{n - q'} > \La_{1} > \ldots > \La_{p'} > \La_{n - q' + 1} > \ldots > \La_{n} > \La_{p' + 1} > \ldots > \La_p.
		\]
		Since the coordinates of $\La$ are obtained by permuting those of $\La^{\dom}$, we have
		\[
		\La = (\La_{q - q'+1}^{\dom}, \ldots, \La_{q - q'+p'}^{\dom}, \La_{q + p'+1}^{\dom}, \ldots, \La_{n}^{\dom} \, | \, \La_{1}^{\dom}, \ldots ,\La_{q - q'}^{\dom}, \La_{q - q'+ p' + 1}^{\dom}, \ldots, \La_{q + p'}^{\dom}), 
		\]
		where the sequence $\La_{q - q'+1}^{\dom}, \ldots, \La_{q - q'+p'}^{\dom}, \La_{q - q'+ p' + 1}^{\dom}, \ldots, \La_{q + p'}^{\dom}$ forms a string such that $\La_q^{\dom}$ belongs to this string and $\La_q^{\dom}$ does not lie on the right end of the string (namely $\La_q^{\dom} \neq \La_{q + p'}^{\dom}$ since $p'\geq 1$). In particular, we have  $\La_{q}^{\dom} = \La_{q+1}^{\dom}+1$ which proves (b).
		
		Therefore, there exist $s$ and $r$ as in the statement of the theorem (namely $s=q-q'+1$ and $r=p'+q'-1$) such that  $\La^{\dom}$ is of the form
		\begin{equation}\label{eq:dom-string}
			\La^{\dom} = (\La^{\dom}_1, \ldots, \La^{\dom}_{s -1}; \La^{\dom}_{s},  \La^{\dom}_{s} - 1 \ldots, \La^{\dom}_{s} - r; \La^{\dom}_{s + r+ 1}, \ldots, \La^{\dom}_n),
		\end{equation}
		where $\La^{\dom}_{s-1} \geq \La^{\dom}_{s} + 2$ if $s>1$, $\La^{\dom}_{s+r+1} \leq \La^{\dom}_{s} - r - 2$ if $n> s + r$, and 
		$s \leq q \leq s + r - 1$. 
		
		If $\La^{\dom}$ is not of the form \eqref{eq:dom-string}, i.e. if $\La^{\dom}_q \neq \La^{\dom}_{q+1} + 1$, then the only unitary parameter conjugate to $\La^{\dom}$ is \eqref{tilde La}. 
		
		If $\La^{\dom}$ is of the form \eqref{eq:dom-string}, i.e. if $\La^{\dom}_q = \La^{\dom}_{q+1} + 1$, then besides the  unitary parameter \eqref{tilde La}, $\La^{\dom}$ has unitary conjugates satisfying $\La_{p'} = \La_{n - q' + 1} + 1$. These unitary conjugates are given by: 
		\begin{align}\label{unitary_string}
			\Lambda = (&\La^{\dom}_{s} - a, \ldots, \La^{\dom}_{s} - b; \La^{\dom}_{s} - c - 1, \ldots, \La^{\dom}_{s} - r;
			\Lambda^{\mathrm{dom}}_{s+r+1}, \ldots, \Lambda_{n}^{\mathrm{dom}} \, | \\
			&\qquad\quad\Lambda^{\mathrm{dom}}_{1}, \ldots ,\Lambda^{\mathrm{dom}}_{s-1}; 
			\La^{\dom}_{s}, \ldots, \La^{\dom}_{s} - a + 1; \La^{\dom}_{s} - b - 1, \ldots, \La^{\dom}_{s} - c), \notag
		\end{align}
		where $0 \leq a \leq b < c \leq r$ and $c = b - a - s + q + 1$. (The last equation comes from the fact that there are exactly $p$ coordinates to the left of the bar and $q$ coordinates to the right of the bar.) Now $b<c$ and $c\leq r$ imply that 
		\[
		a \leq q - s \quad\text{ and }\quad b - a \leq (s + r - 1) - q.
		\]
		In this case $\La_{p'} = \La^{\dom}_{s} - b$, $\La_{n - q' + 1} = \La^{\dom}_{s} - b - 1$, $p' = b - a + 1$, and $q' = c-b$. If $a = 0$, then the string $\La^{\dom}_{s}, \ldots, \La^{\dom}_{s} - a + 1$ in \eqref{unitary_string} is empty. If $c = r$ then the string $\La^{\dom}_{s} - c - 1, \ldots, \La^{\dom}_{s} - r$ in \eqref{unitary_string} is empty.
	\end{proof}
	
	\begin{ex}{\rm 
			If $\La^{\dom} = \rho = \left ( \frac{n-1}{2}, \frac{n-3}{2},  \ldots,  - \frac{n-1}{2} \right)$, then $\rho$ is of the form \eqref{eq:dom-string}, where $s = 1$, $r= n - 1$ and $\La_{1}^{\dom} = \frac{n-1}{2}$. Therefore, unitary parameters are
			\[
			\tilde{\rho} = \left ( \frac{n-1}{2} - q, \frac{n-1}{2} - q - 1, \ldots, -\frac{n-1}{2} \, \middle| \, \frac{n-1}{2}, \frac{n-1}{2} - 1, \ldots, \frac{n-1}{2} - q + 1 \right ) 
			\]
			and the parameters of the form    
			\begin{align}\label{unitary_string-rho}
				\Lambda = \Bigl( &
				\frac{n-1}{2} - a, \ldots,  \frac{n-1}{2} - b;
				\frac{n-1}{2} - c - 1, \ldots,  -\frac{n-1}{2} 
				\, \Big| \\
				& \qquad\quad\frac{n-1}{2}, \ldots,  \frac{n-1}{2} - a + 1;
				\frac{n-1}{2} - b - 1, \ldots,  \frac{n-1}{2} - c
				\Bigr)\nonumber,
			\end{align}
			where $0 \leq a \leq q - 1$, $0 \leq b-a \leq n - 1 - q = p - 1$ and $c = b - a + q$. In this case $\La_{p'} =  \frac{n-1}{2} - b$, $\La_{n - q' + 1} =  \frac{n-1}{2} - b - 1$, $p' = b - a + 1$, and $q' = c-b$. If $a = 0$, then the string $ \frac{n-1}{2}, \ldots,  \frac{n-1}{2} - a + 1$ in \eqref{unitary_string-rho} is empty. If $c = n-1$ then the string $ \frac{n-1}{2} - c - 1, \ldots,  -\frac{n-1}{2}$ in \eqref{unitary_string-rho} is empty.
			
			In case $a = 0$ and $c = n-1$, i.e. $b = p - 1$ the parameter \eqref{unitary_string-rho} is equal to $\rho$. 
		}
	\end{ex}
	
	To get a better understanding of these unitary points (other than $\tilde\rho$), we give them a geometric interpretation as ``edge points" of the Hasse diagram. 
	For this, it is useful to attach a Young diagram $\YD_w$ to each $w\rho$. For $1\leq k\leq p$, we let the $k$-th row of $\YD_w$ consist of
	\[
	(w\rho)_k-\tilde\rho_k
	\]
	boxes. In this way we clearly get a Young diagram with at most $p$ rows and with each row of length at most $r-(k-1) - (p-r-k)=q$. Note that $\tilde\rho$ corresponds to the empty diagram, while $\rho$ corresponds to the maximal diagram with $pq$ boxes arranged in a $p\times q$ rectangle.
	
	We say that $w\rho$ is the $i$-th point on the $j$-th edge of the Hasse diagram if $\YD_w$ has $j$ nonzero rows, each with $i$ boxes. This means that $w\rho$ is equal to $\La$ in 
	\eqref{unitary_string-rho}, with $i= q' = c - b$ and $j= p' = b - a + 1$.
	
	\begin{ex}{\rm Let $p=2$ and $q=3$. Then $\rho = (2, 1, 0, -1, -2)$. The Hasse diagram is
			
			\medskip
			
			{\footnotesize
				\[
				\begin{CD}
					(2,1\,|\,0,-1,-2)@<<<(2,0\,|\,1,-1,-2)@<<<(2,-1\,|\,1,0,-2)@<<<(2,-2\,|\,1,0,-1) \\
					@. @AAA @AAA @AAA \\
					@.(1,0\,|\,2,-1,-2)@<<<(1,-1\,|\,2,0,-2)@<<<(1,-2\,|\,2,0,-1) \\
					@.@.@AAA @AAA \\
					@.@.(0,-1\,|\,2,1,-2)@<<<(0,-2\,|\,2,1,-1)\\
					@.@.@.@AAA\\
					@.@.@.(-1,-2\,|\,2,1,0)
				\end{CD}
				\]
			}
			\medskip
			
			\noindent with arrows pointing to larger elements in Bruhat order. The corresponding Young diagrams are
			
			\medskip
			
			{\tiny
				\[
				\begin{CD}
					\ydiagram{3,3}@<<<\ydiagram{3,2}@<<<\ydiagram{3,1}@<<<\ydiagram{3,0} \\
					@. @AAA @AAA @AAA \\
					@.\ydiagram{2,2}@<<<\ydiagram{2,1}@<<<\ydiagram{2,0} \\
					@.@.@AAA @AAA \\
					@.@.\ydiagram{1,1}@<<<\ydiagram{1,0}\\
					@.@.@.@AAA\\
					@.@.@. \emptyset
				\end{CD}
				\]
			}
			\medskip
			
			\noindent The first edge is the last column, and the second edge is the diagonal, both excluding the smallest point $\tilde\rho$. The points on each edge are counted in the direction of arrows.
		}
	\end{ex}
	
	We now go back to general regular integral parameters. As mentioned earlier, they are all of the form 
	\[
	w(\rho+\mu),\qquad w\in W^1,\ \mu \text{ a $\frg$-dominant weight}.
	\]

	\begin{thm}
		\label{unit reg}
		The parameter $w(\rho+\mu)$ is unitary if and only if the following two conditions are satisfied:
		\begin{enumerate}
			\item $w\rho$ is a unitary point in the Hasse diagram of $\rho$, i.e., either $w\rho=\tilde\rho$, or $w\rho$ is an edge point;
			\item If $w\rho$ is the $i$-th point on the $j$-th edge of the Hasse diagram, then 
			\[
			(w\mu)_1=\dots=(w\mu)_j=(w\mu)_{n-i+1}=\dots=(w\mu)_n,
			\]
			with dots standing for consecutive coordinates. 
		\end{enumerate}
		
		In other words, if $w\rho$ is the $i$-th point on the $j$-th edge of the Hasse diagram of $\rho$, then the regular integral modules with parameters in the Weyl chamber of $w\rho$ are unitary exactly in the reduced translation cone of dimension $p+q-i-j$, described by condition (2).
		In the chamber of $\tilde\rho$, the full translation cone consists of unitary modules. 
	\end{thm}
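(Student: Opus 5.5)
Write $\La=w(\rho+\mu)=w\rho+w\mu$ with $w\in W^1$ and $\mu$ $\frg$-dominant integral. Since $\rho$ is strictly decreasing with steps of $1$ and $\mu$ is weakly decreasing, $\rho+\mu$ is strictly decreasing with steps $1+(\mu_k-\mu_{k+1})\ge 1$. So $\La^{\dom}=\rho+\mu$, and $w$ is determined by $\La$. The plan is to apply the unitarity criterion \eqref{unit cond} to $\La$, in the form obtained in the proof of Theorem \ref{unitarity-regular-su(p, q)}. That criterion says $\La$ is unitary iff either
\begin{itemize}
\item[(A)] $\La$ is the parameter \eqref{tilde La} (all $q$ largest coordinates placed after the bar), or
\item[(B)] $\La$ has the form \eqref{unitary_string}.
\end{itemize}
In case (B), the $p'$-string followed by the $q'$-string is a string of length $p'+q'$ in $\La^{\dom}$ straddling positions $q$ and $q+1$.

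The first step is to show that the relative order of the coordinates of $w(\rho+\mu)$ is the same as that of $w\rho$. This holds because $w$ acts on both by the same permutation, and $\rho+\mu$ and $\rho$ are both strictly decreasing. Hence:
\begin{itemize}
\item Case (A) for $\La$ is equivalent to $w\rho=\tilde\rho$. In that case every $\mu$ gives a unitary module, which is the last assertion of the theorem.
\item The positional shape of \eqref{unitary_string} for $\La$ (which blocks of $\La^{\dom}$ sit where, governed by $a,b,c$ and $s$) is the same as the shape of $w\rho$ in \eqref{unitary_string-rho}.
\end{itemize}

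The second step handles case (B), and it is the main point. For $\rho$ itself, Example 2.? (the $\rho$ example) shows the unitary points are exactly $\tilde\rho$ and the parameters \eqref{unitary_string-rho}, which are precisely the edge points. If $w\rho$ is the $i$-th point on the $j$-th edge, then $j=p'$ and $i=q'$. The positions of $w\rho$ occupied by the $p'$-string and $q'$-string are $1,\dots,j$ and $n-i+1,\dots,n$, and they carry the consecutive values $\frac{n-1}{2}-a,\dots,\frac{n-1}{2}-c$ of $\rho$.

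I then need: $\La=w(\rho+\mu)$ is of the form \eqref{unitary_string} with the same $p'=j$, $q'=i$ iff those $i+j$ coordinates of $\La$ again form a string. They do form a string iff the corresponding consecutive entries $\mu_{a+1},\dots,\mu_{c+1}$ of $\mu$ are all equal, because the steps of $\rho+\mu$ exceed $1$ exactly where $\mu$ drops. Transported by $w$, this is exactly condition (2): $(w\mu)_1=\dots=(w\mu)_j=(w\mu)_{n-i+1}=\dots=(w\mu)_n$.

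I also need the converse direction. If $w\rho$ is neither $\tilde\rho$ nor an edge point, then by the order-preservation step $\La$ cannot be of type (A), and it cannot be of type (B) either. Type (B) would force $w\rho$ to have the shape \eqref{unitary_string-rho}, since the string condition on $\La$ only adds equalities on $\mu$. So such $\La$ is never unitary, for any $\mu$. Conversely, if $w\rho$ is an edge point, the same shape comparison shows that $\La$ is unitary iff condition (2) holds.

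Finally, I would count dimensions. Condition (2) imposes $i+j-1$ independent linear equalities on the dominant cone of $\mu$, which has dimension $p+q-1$ in $\frsl$-coordinates. So the unitary set is a face of the translation cone of dimension $p+q-i-j$, matching the stated reduced cone. I expect the main obstacle to be the bookkeeping in the second step: checking carefully that the blocks $a,b,c$ in \eqref{unitary_string} applied to $\rho+\mu$ correspond to the same positions as in \eqref{unitary_string-rho}, and handling edge cases where some strings are empty ($a=0$ or $c=n-1$). I would verify these directly from the explicit formulas rather than through Bruhat-order arguments.
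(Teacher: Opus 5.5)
Your proposal is correct and is essentially the paper's argument: you reduce to the list in Theorem~\ref{unitarity-regular-su(p, q)}, and you use that $w$ permutes $\rho+\mu=\La^{\dom}$ and $\rho$ in the same way, so the shape \eqref{cone 2} of $\La$ matches the edge-point shape \eqref{unitary_string-rho} of $w\rho$. You then observe, as the paper does by comparing coordinates, that the $p'$- and $q'$-strings of $\La$ together form a string exactly when $\mu$ is constant on the corresponding consecutive block, which is condition (2); you are more explicit than the paper about the converse, the chamber of $\tilde\rho$, and the dimension count $n-1-(i+j-1)=p+q-i-j$, all of which the paper leaves implicit.
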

	
	\begin{proof}
		It follows from Theorem \ref{unitarity-regular-su(p, q)} that if $\La^{\dom}_{q} = \La^{\dom}_{q + 1} + 1$, then the unitary parameters are 
		\eq
		\label{cone 1}
		w (\rho + \mu) = \La = (\La_{q+1}^{\dom}, \La_{q+2}^{\dom}, \ldots, \La_{n}^{\dom} \, | \, \La_{1}^{\dom}, \ldots ,\La_{q}^{\dom})
		\eeq
		and  
		\begin{align}
			\label{cone 2}
			w (\rho + \mu) = \Lambda = (&\La^{\dom}_{s} - a, \ldots, \La^{\dom}_{s} - b; \La^{\dom}_{s} - c - 1, \ldots, \La^{\dom}_{s} - r;
			\Lambda^{\mathrm{dom}}_{s+r+1}, \ldots, \Lambda_{n}^{\mathrm{dom}} \, | \\
			&\Lambda^{\mathrm{dom}}_{1}, \ldots ,\Lambda^{\mathrm{dom}}_{s-1}; 
			\La^{\dom}_{s}, \ldots, \La^{\dom}_{s} - a + 1; \La^{\dom}_{s} - b - 1, \ldots, \La^{\dom}_{s} - c)\nonumber \\
			= (& \La^{\dom}_{s + a}, \ldots, \La^{\dom}_{s + b}; \La^{\dom}_{s + c + 1}, \ldots, \La^{\dom}_{n} \, | \, \La^{\dom}_{1}, \ldots, \La^{\dom}_{s + a - 1}; \La^{\dom}_{s + b + 1}, \ldots, \La^{\dom}_{s + c}), \nonumber
		\end{align}
		where $0 \leq a \leq q - s$, $0 \leq b-a \leq (s + r - 1) - q$ and $c = b - a - s + q + 1$, $p' = b - a + 1$, and $q' = c-b$. 
		
		In case $w (\rho + \mu)$ is given by \eqref{cone 1}, we have
		\[
		w \rho = \tilde{\rho}.
		\]
		If $w (\rho + \mu)$ is given by \eqref{cone 2}, then
		\[
		w \mu = (\mu_{s + a}, \ldots, \mu_{s + b}; \mu_{s + c + 1}, \ldots, \mu_n \, | \, \mu_1, \ldots, \mu_{s + a - 1}; \mu_{s + b + 1}, \ldots, \mu_{s + c}).
		\]
		Therefore, we have 
		\begin{align*}
			\Lambda & = w \rho + w \mu  \\
			& = \Bigl( 
			\frac{n-1}{2} - (s + a) + 1, \ldots,  \frac{n-1}{2} - (s + b) + 1;
			\frac{n-1}{2} - (s + c), \ldots,  -\frac{n-1}{2} 
			\, \Big| \nonumber\\
			& \qquad \frac{n-1}{2}, \ldots,  \frac{n-1}{2} - (s + a) + 2;
			\frac{n-1}{2} - (s + b), \ldots,  \frac{n-1}{2} - (s + c) + 1
			\Bigr) \\
			& \qquad  + (\mu_{s + a}, \ldots, \mu_{s + b}; \mu_{s + c + 1}, \ldots, \mu_n \, | \, \mu_1, \ldots, \mu_{s + a - 1}; \mu_{s + b + 1}, \ldots, \mu_{s + c}). 
		\end{align*}
		Comparing with \eqref{cone 2}, it follows that 
		\begin{align*}
			\mu_{s + a} + \frac{n-1}{2} - s -  a + 1 & = \La^{\dom}_{s} - a \\    
			& \vdots \\
			\mu_{s + b} + \frac{n-1}{2} - s -  b + 1 & = \La^{\dom}_{s} - b \\
			\mu_{s + b + 1} + \frac{n-1}{2} - s - b & = \La^{\dom}_{s} - b - 1 \\
			& \vdots \\
			\mu_{s + c} + \frac{n-1}{2} - s - c  + 1& = \La^{\dom}_{s} - c.
		\end{align*}
		Therefore we have
		\[
		\mu_{s + a} = \ldots = \mu_{s + b} = \mu_{s + b + 1} = \ldots = \mu_{s + c} = \Lambda^{\dom}_s - \frac{n-1}{2} + s - 1.
		\]
		Since $i = q' = c - b$ and $j = p' = b - a + 1$, 
		we have
		\[
		(w\mu)_1=\dots=(w\mu)_j=(w\mu)_{n-i+1}=\dots=(w\mu)_n.
		\]
	\end{proof}
	
	\subsection{Singular case}
	
	The singular parameters are those $\La^{\dom}$ which have at least one repeated coordinate. If a coordinate is repeated more than twice then there is no paramater in its orbit that would be dominant regular for $\frk.$ 
	
	First we will prove that in order to have any unitary conjugate, $\Lambda^{\mathrm{dom}}$ must be
	of the form
	\begin{equation} \label{lad_su(p,q)-singular}
		\La^{\dom} = (D_1;\, C_1\, A_1^{'} B_1 A_2^{'} B_2 \cdots B_{r-1} A_r^{'}\, C_2;\, D_2),\quad r \ge 1
	\end{equation}
	where the middle part
	\[
	C_1\, A_1^{'} B_1 A_2^{'} B_2 \cdots B_{r-1} A_r^{'} \, C_2
	\]
	is a weakly decreasing string and it decreases strictly except for the repetitions
	inside the blocks $A_i^{'}$. Each substring $A_i^{'}$ consists of coordinates that
	appear exactly twice, while all coordinates in the substrings $B_i$, $C_1$, and
	$C_2$ are non-repeated. 
	
	Moreover:
	\begin{itemize}
		\item Each $A_i^{'} \neq \emptyset$ is a weakly decreasing string of pairs of repeated coordinates.
		\item For each $A_i^{'}$, we will denote by $A_i$ the underlying strictly decreasing string of distinct coordinates. The number of repeated coordinates is equal to $\sum_{i = 1}^{r} |A_i|$, and this number is at most $p$, since in a unitary conjugate each repeated coordinate must appear on both the left and the right sides of the bar, and $p \leq q$.
		\item Each $B_i \neq \emptyset$ consists of (strictly decreasing) string non-repeated coordinates. (It is possible that $r = 1$; in that case, there are no $B_i$ strings.)
		\item $C_1$ and $C_2$ are, possibly empty, strictly decreasing strings such that there is no gap with $A_1^{'}$ and $A_r^{'}$, respectively.
		\item The sequences $D_1$ and $D_2$ are (possibly empty) decreasing sequences of non-repeated coordinates, not necessarily strings. There is a gap of at least two between $D_1$ and $C_1$ when $D_1,C_1 \neq \emptyset$, or between $D_1$ and $A_1'$ when $D_1 \neq \emptyset$, $C_1 =\emptyset$. Likewise, there is a gap of at least two between $C_2$ and $D_2$ when $C_2,D_2 \neq \emptyset$, or between $A_r'$ and $D_2$ when $D_2 \neq \emptyset$, $C_2 =\emptyset$. 
	\end{itemize}

	\begin{ex} We have
		\[
		(
		\underbrace{18, 16, 15}_{D_1};
		\underbrace{12, 11}_{C_1},
		\underbrace{10, 10, 9, 9, 8, 8}_{A_1^{'}}, 
		\underbrace{7, 6}_{B_1}, 
		\underbrace{5, 5}_{A_2^{'}},
		\underbrace{4}_{B_2}, 
		\underbrace{3, 3, 2, 2}_{A_3^{'}},
		\underbrace{1}_{B_3},
		\underbrace{0, 0, -1, -1}_{A_4^{'}}; 
		\underbrace{-5}_{D_2}
		)
		\]
		with
		\[
		A_1 = (10, 9, 8), \ A_2 = (5), \ A_3 = (3, 2), \ A_4 = (0, -1), \  C_2 = \emptyset.
		\]
	\end{ex}
	
	The claimed form \eqref{lad_su(p,q)-singular} follows from the following three lemmas. Recall that any unitary conjugate $\La$ of $\La^{\dom}$ must satisfy the unitarity condition \eqref{unit cond}.
	
	\begin{lem}\label{sing upq l1}
		If $\La$ is unitary, then:
		
		(1) The $(p-p')$--part and $(q-q')$--part of $\La$ do not share any coordinates. 
		
		(2) Every repeated coordinate must appear in the union of the $p'$--string and the $q'$--string.
	\end{lem}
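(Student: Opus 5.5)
The approach is to read off explicit numerical bounds on the two ``parts'' of $\La$ from the description of unitary parameters recalled at the beginning of this section, and compare them. We use only that description together with the unitarity condition \eqref{unit cond}. We also use that, $\La$ being $\frk$-dominant regular, the coordinates on each side of the bar are strictly decreasing. In particular, a repeated coordinate occurs exactly once on the left of the bar and once on the right.

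For (1), we first bound the $(p-p')$--part from above. All its coordinates are at most $\La_{p'+1}$. By the condition $\La_{p'+1} < \La_1 - p' = \La_{p'} - 1$ and integrality, $\La_{p'+1} \le \La_{p'} - 2$. Using \eqref{unit cond}, $\La_{p'} \le \La_{n-q'+1} + 1$, so every coordinate of the $(p-p')$--part is at most $\La_{n-q'+1} - 1$.

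Next we bound the $(q-q')$--part from below. All its coordinates are at least $\La_{n-q'}$. The condition $\La_{n-q'} > \La_n + q' = \La_{n-q'+1} + 1$ and integrality give $\La_{n-q'} \ge \La_{n-q'+1} + 2$. Hence every coordinate of the $(p-p')$--part is strictly smaller than every coordinate of the $(q-q')$--part, and the two parts share no coordinates. This inequality chain is the only real content of the lemma; the one point needing care is to apply integrality to turn the strict inequalities into gaps of $2$.

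For (2), let $x$ be a repeated coordinate. By $\frk$-regularity, one copy of $x$ lies among $\La_1,\dots,\La_p$ and the other among $\La_{p+1},\dots,\La_n$. If neither copy lay in the $p'$--string or the $q'$--string, then $x$ would belong to both the $(p-p')$--part and the $(q-q')$--part, contradicting (1). Hence at least one copy of $x$ lies in the union of the $p'$--string and the $q'$--string.

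Statement (2) cannot be strengthened to require both copies to lie in the strings. For example, take $p' = 1$, $q' = 2$, with left side $(2,0,\dots)$ and right side $(\dots,1,0)$. Here the repeated coordinate $0$ sits in the $(p-p')$--part, with its other copy in the $q'$--string. So the statement should be read, and proved, in the ``at least one copy'' form above.
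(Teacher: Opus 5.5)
Your proof is correct and is essentially the paper's. Part (1) is the same inequality chain $\La_{p'+1}<\La_{p'}\le\La_{n-q'+1}+1<\La_{n-q'}$, built from the gap conditions and the unitarity condition \eqref{unit cond}. Part (2) is deduced from (1) in the ``at least one copy'' sense you identify, which is also how the paper later uses it (for example, when some $A_i$ lie in the $(p-p')$--part). Your appeal to integrality to get gaps of $2$ is harmless but unnecessary, since the chain above is already strict.
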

	\pf (1) follows from the following chain of inequalities
	\[
	\underbrace{\La_p < \ldots < \La_{p' + 1}}_{(p - p') -\text{part }} < \La_{p'} \leq \La_{n - q' + 1} + 1 < \underbrace{\La_{n - q'} < \La_{n - q' - 1} < \ldots < \La_{p+1}}_{(q - q') -\text{part }},
	\]
	where the $\leq$ inequality is the unitarity condition \eqref{unit cond} and the other inequalities are obvious.
	
	(2) follows immediately from (1).
	\epf
	
	Let us denote by $I$ the set of shared coordinates of the $p'$--string and the $(q-q')$--part. 
	
	If the $p'$--string and the $q'$--string share some coordinates, then they share the entire string between the highest and the lowest coordinates that appear in both the $p'$--string and the $q'$--string. Let us denote that string by $II$.
	
	Let us denote by $III$ the set of shared coordinates of the $(p-p')$--part and the $q'$--string. Some of the sets $I, II, III$ may be empty, but $I \cup II \cup III \neq \emptyset$, since we are considering the singular case. 
	If all three of them are not empty, we have 
	\[I > II > III,\]
	meaning that all elements of $I$ are greater than all elements of $II$ and all elements of $II$ are greater than all elements of $III$.
	
	Now we will determine what form do the unitary conjugates of $\La^{\dom}$ take. The answer is that in the case of $II \neq \emptyset$ the only freedom can be in distribution of elements of $C_1$ and $C_2$ to the left and right of the bar. In the case of $II = \emptyset$, there can be additional freedom in distribution of elements of $B_i$ where $i$ is maximal such that $A_1B_1\ldots A_i B_i$ forms a string.
	
	\begin{lem}
		\label{sing upq l2}
		Assume that $\La^{\dom}$ is singular and that $\La$ is a unitary conjugate of $\La^{\dom}$. Then:
		
		(1) Either $\La_{p'} -  \La_{n - q' + 1} \leq 0$ or $\La_{p'} -  \La_{n - q' + 1} = 1$.
		
		(2) If $\La_{p'} -  \La_{n - q' + 1} \leq 0$, then the set $II$ is not empty. 
		
		(3) If $\La_{p'} -  \La_{n - q' + 1} = 1$, then the sequence $\La_1, \ldots, \La_{p'}, \La_{n - q' + 1}, \ldots, \La_{n}$ forms a string, and $II = \emptyset$.
	\end{lem}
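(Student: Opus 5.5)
Throughout, I use that $\La$ is integral and $\frk$-dominant regular, so both the $p'$--string $\La_1,\dots,\La_{p'}$ and the $q'$--string $\La_{n-q'+1},\dots,\La_n$ are strings. By \eqref{unit cond}, $\La_{p'}-\La_{n-q'+1}\le 1$, and this difference is an integer. So either it is $\le 0$ or it equals $1$, which is (1).

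For (3), assume $\La_{p'}=\La_{n-q'+1}+1$. Concatenating the $p'$--string with the $q'$--string then gives one string, since the last entry of the first is exactly one more than the first entry of the second. This string is strictly decreasing, so the two strings share no coordinate, and hence $II=\emptyset$.

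For (2), assume $\La_{p'}\le\La_{n-q'+1}$. I would compare the ranges of the two strings. The $p'$--string occupies the interval $[\La_{p'},\La_1]$ and the $q'$--string occupies $[\La_n,\La_{n-q'+1}]$, and both are full integer strings. I will show the intervals intersect, after which any integer in the intersection lies in both strings, giving $II\neq\emptyset$. Since $\La_{p'}\le\La_{n-q'+1}$ by hypothesis, it suffices to show $\La_n\le\La_1$; then $\max(\La_{p'},\La_n)\le\min(\La_1,\La_{n-q'+1})$ and the intersection is nonempty.

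The main point is to exclude $\La_n>\La_1$, and this is where singularity enters. If $\La_n>\La_1$, then every coordinate of the $q'$--string exceeds every coordinate of the $p'$--string. Combined with the chain from Lemma \ref{sing upq l1}(1), we get
\[
\La_p<\dots<\La_1<\La_n<\dots<\La_{p+1},
\]
so the left block lies entirely below the right block. Each side is strictly decreasing, so $\La$ would have no repeated coordinate at all. This contradicts the assumption that $\La^{\dom}$, and hence its conjugate $\La$, is singular. Therefore $\La_n\le\La_1$, completing (2). This exclusion is the only nontrivial step, and it is short once Lemma \ref{sing upq l1}(1) is in hand.
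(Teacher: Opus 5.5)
Your proof is correct and is essentially the paper's argument. The only substantive step, that $\La_1<\La_n$ would put every left coordinate below every right coordinate and so contradict singularity, is exactly what the paper uses in (2); you run it in contrapositive form (first $\La_n\le\La_1$, then overlap of $[\La_{p'},\La_1]$ and $[\La_n,\La_{n-q'+1}]$) instead of deducing $\La_1<\La_n$ from $II=\emptyset$. Two cosmetic remarks: the coordinates lie in one coset of $\bbZ$, not necessarily in $\bbZ$, so take $\max(\La_{p'},\La_n)$ as the shared coordinate rather than ``any integer'' in the overlap; and the appeal to Lemma~\ref{sing upq l1}(1) is unnecessary, since $\frk$-dominance together with $\La_1<\La_n$ already gives your separating chain.
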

	\pf (1) is immediate from \ref{unit cond}.
	
	(2) Suppose that $II$ is empty.
	Then $\La_{p'}$ is not contained in the $q'$--string, and since by assumption $\La_{p'} \leq  \La_{n - q' + 1}$, it follows that $\La_{p'} < \La_{n}$. 
	
	Likewise, $\La_n$ is not contained in the $p'$--string, and thus $\La_n > \La_1$. Consequently, we have
	\[
	(q - q')-\text{part } > q'-\text{string } > p' -\text{string } > (p - p') -\text{part},
	\]
	which is a contradiction with $\La^{\dom}$ being singular. 
	
	(3) This statement is obvious.
	\epf
	
	\begin{lem}
		\label{sing upq l3}
		Suppose that $\La^{\dom}$ is singular and that $\La$ is a unitary conjugate of $\La^{\dom}$. Then:
		
		(1) By arranging the coordinates of $\La$ in the union of the $p'$--string and the $q'$--string in decreasing order, we obtain one large string in which some coordinates appear twice and some only once. 
		
		(2) The part of $\La^{\dom}$  between the highest and lowest repeated coordinates must consist of a string in which some coordinates appear twice and some only once.
	\end{lem}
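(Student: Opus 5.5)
The plan is to split according to Lemma \ref{sing upq l2}, treating the two alternatives $\La_{p'}-\La_{n-q'+1}=1$ and $\La_{p'}-\La_{n-q'+1}\le 0$ separately for part (1). Part (2) will then follow from (1) together with Lemma \ref{sing upq l1}.

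\emph{Case $\La_{p'}-\La_{n-q'+1}=1$.} Here Lemma \ref{sing upq l2}(3) states directly that $\La_1,\dots,\La_{p'},\La_{n-q'+1},\dots,\La_n$ is a string. The $p'$--string and the $q'$--string are each strictly decreasing, and their concatenation is a string with no coordinate appearing twice. Hence (1) holds trivially in this case.

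\emph{Case $\La_{p'}-\La_{n-q'+1}\le 0$.} By Lemma \ref{sing upq l2}(2) the set $II$ is nonempty, so the two strings share at least one value $x$. Write the $p'$--string as $[\La_{p'},\La_1]$ and the $q'$--string as $[\La_n,\La_{n-q'+1}]$, viewed as intervals of integers shifted by the common class of $\La$. Both strings are intervals of consecutive values, and they intersect. Therefore their union is again an interval of consecutive values, i.e. a string. The values in the intersection $II$ appear exactly twice and all other values appear once. This proves (1).

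\emph{Part (2).} By Lemma \ref{sing upq l1}(2), every repeated coordinate of $\La$ lies in the union $U$ of the $p'$--string and the $q'$--string. Since $\La^{\dom}$ is a permutation of $\La$, the highest and lowest repeated coordinates of $\La^{\dom}$ both lie in $U$. By (1), $U$ sorted decreasingly is a string, so every value strictly between these two extremes occurs in $U$. The main point to check is that no \emph{other} coordinate of $\La$, i.e. one from the $(p-p')$--part or the $(q-q')$--part, has a value in this range, apart from occurrences already accounted for.

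Take a coordinate $y$ of the $(p-p')$--part. Then $y<\La_{p'}$. If $y$ lies in the range, then $y$ is a value of $U$, and since $y<\La_{p'}$ it must belong to the $q'$--string. So $y$ is an element of $III$, hence a repeated coordinate counted as one of the double occurrences. Symmetrically, a coordinate of the $(q-q')$--part in the range exceeds $\La_{n-q'+1}$, so it lies in the $p'$--string and belongs to $I$.

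It follows that the multiset of coordinates of $\La^{\dom}$ in that range is the string $U$ (restricted to the range) with some values doubled, and no value occurs three times because of $\frk$-regularity. This is exactly (2).

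The one delicate point I expect is confirming, in the case $\La_{p'}-\La_{n-q'+1}\le0$, that each repeated value coming from $I$ or $III$ really lies in $U$. This holds because $I\subset p'$--string and $III\subset q'$--string by definition, so the description of repeated values in (2) is consistent with (1).
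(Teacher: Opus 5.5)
Your proof is correct and follows the paper's argument. Part (1) uses the same case split from Lemma \ref{sing upq l2}: a gapless concatenation when $\La_{p'}-\La_{n-q'+1}=1$, and two overlapping intervals when $II\neq\emptyset$. Part (2) then follows from (1) together with Lemma \ref{sing upq l1}. Your extra check on coordinates of the $(p-p')$-- and $(q-q')$--parts lying in the range is harmless but not needed: $\frk$-regularity already caps every multiplicity at two, and (1) supplies every value in the range at least once.
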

	\pf (1) By Lemma \ref{sing upq l2}, there are two cases. Either $II$ is not empty, meaning that a substring of the $p'$--string will overlap with the $q'$--string, so the union forms a string with overlapping coordinates repeating twice, or $II=\emptyset$ and the union of the $p'$--string and the $q'$--string consists of one large string with no repetitions.
	
	(2) follows from (1) and the fact that every repeated coordinate must appear in the union of the $p'$--string and the $q'$--string (Lemma \ref{sing upq l1}). 
	\epf
	
	This concludes the proof of \eqref{lad_su(p,q)-singular}. Now we will identify which parts of $\La$ are contained where in the decomposition of $\La^{\dom}$ given by \eqref{lad_su(p,q)-singular}.
	
	\begin{lem}
		\label{sing upq l4}
		Suppose that $\La^{\dom}$ is singular and that $\La$ is a unitary conjugate of $\La^{\dom}$. Then both the
		$p'$--string and the $q'$--string of $\La$ are contained in the string
		\eq\label{sing upq eq1}
		(C_1 A_1 B_1 A_2 B_2 \ldots B_{r-1} A_r C_2)
		\eeq
		(the notation as above).
	\end{lem}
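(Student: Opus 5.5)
By Lemma \ref{sing upq l3}(1), the union of the coordinates of the $p'$--string and the $q'$--string of $\La$, arranged in decreasing order, is one string $S$ in which some entries occur twice. I will show that $S$ is contained in the middle part \eqref{sing upq eq1} of $\La^{\dom}$. Both the $p'$--string and the $q'$--string are substrings of $S$, so this is enough.

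\emph{Step 1: $S$ contains a repeated coordinate of $\La^{\dom}$.} Assume it does not. By Lemma \ref{sing upq l1}(2), every repeated coordinate lies in the union of the two strings. Every repeated coordinate of $\La^{\dom}$ occurs twice in $\La$, once on each side of the bar. So both occurrences would lie in the union, and the coordinate would appear twice in $S$, a contradiction. Since $\La^{\dom}$ is singular, there is at least one repeated coordinate, so $S$ meets the middle block $C_1A_1'B_1\cdots A_r'C_2$.

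\emph{Step 2: $S$ stays inside the middle block.} The middle block is a maximal string in $\La^{\dom}$. Indeed, by the description \eqref{lad_su(p,q)-singular}, there is a gap of at least two between $D_1$ and the first element of $C_1A_1'$, and between the last element of $A_r'C_2$ and $D_2$. A string of integers decreasing by one, with entries drawn from the coordinates of $\La^{\dom}$, cannot jump over such a gap. Since $S$ is a string and meets the middle block by Step 1, all its distinct values lie in the set of values of $C_1A_1B_1\cdots A_rC_2$.

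\emph{Step 3: multiplicities.} The values of $S$ are coordinates of $\La$, hence of $\La^{\dom}$. A value occurring twice in $S$ occurs twice in $\La^{\dom}$, so it lies in some $A_i$. The other values are values in $C_1$, $B_i$, $C_2$ or $A_i$. In every case they lie in the string \eqref{sing upq eq1}.

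\emph{Main obstacle.} The delicate point is justifying the gap argument in Step 2. I must use the precise definition of $D_1$, $D_2$: they are exactly the coordinates outside the maximal string that contains all the repeated entries, and the gaps exist by construction in \eqref{lad_su(p,q)-singular}. I also need to check the degenerate situation where $C_1$ or $C_2$ is empty, so that the gap is measured from $A_1'$ or $A_r'$ directly. This case is covered by the same bullet in the description of the form \eqref{lad_su(p,q)-singular}.
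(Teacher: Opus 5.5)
Your proof is correct, and it takes a different route from the paper's. The paper works by contradiction, one string at a time. If the $p'$--string is not inside \eqref{sing upq eq1}, the gaps force it to lie entirely in $D_1$ or entirely in $D_2$. The case $D_2$ is impossible, because the repeated values would then exceed $\La_1$ and could not occur left of the bar. The case $D_1$ is impossible, because the repeated values would sit in the $(p-p')$--part. By Lemma~\ref{sing upq l1} they would then lie in the $q'$--string, which would lie in the middle block and give $\La_{p'}-\La_{n-q'+1}\geq 2$, contradicting \eqref{unit cond}. The $q'$--string is handled symmetrically.

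You instead start from Lemma~\ref{sing upq l3}(1), which is where the unitarity inequality enters, through Lemma~\ref{sing upq l2}. You combine it with Lemma~\ref{sing upq l1}(2) and singularity to see that the single string $S$ meets the middle block. One gap argument then settles both strings at once. Your version is shorter and avoids the case split; the paper's version uses \eqref{unit cond} directly rather than the packaged statement of Lemma~\ref{sing upq l3}.

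Two small corrections. In Step 1, the sentence saying that both occurrences of a repeated coordinate lie in the union is false in general. A repeated value in the set $I$ (resp.\ $III$) has its second occurrence in the $(q-q')$--part (resp.\ $(p-p')$--part), so it appears only once in $S$. For instance, in the paper's example $\La=(11,10,9,8;\,5,3,2,0,-1,-5\mid 18,16,15,12;\,10,\dots,-1)$, the values $5,3,2,0,-1$ appear once in $S$. You do not need that sentence. Lemma~\ref{sing upq l1}(2) gives at least one occurrence in the union, and singularity then shows directly that $S$ contains a value of some $A_i$; no contradiction argument is required. Step 3 is redundant: the strings are strictly decreasing, so containment in \eqref{sing upq eq1} means containment of value sets, which Step 2 already gives.
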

	\pf
	Suppose that the $p'$--string is not contained in the string \eqref{sing upq eq1}.
	Then either $\Lambda_1 \in D_1$ or $\Lambda_{p'} \in D_2$. (See \eqref{lad_su(p,q)-singular}.) Recall that there is a gap
	between $D_1$ and $C_1$ (or between $D_1$ and $A_1$ if $C_1=\emptyset$), and between $C_2$ and $D_2$ (or between $A_r$ and $D_2$ if $C_2=\emptyset$). Therefore, if $\Lambda_1 \in D_1$,
	then the $p'$--string is contained inside $D_1$, and if $\Lambda_{p'} \in D_2$,
	then the $p'$--string is contained inside $D_2$.
	
	If the $p'$--string is contained in $D_2$, then the substrings $A_1, \ldots, A_r$
	do not lie on the left-hand side of the bar in $\La$, being bigger than $\La_1$. This is a contradiction, since repeated coordinates must appear on both sides of the bar. Hence the $p'$--string cannot be
	contained in $D_2$.
	
	If the $p'$--string is contained in $D_1$, then the substrings $A_1, \ldots, A_r$
	lie in the $(p - p')$--part, and therefore, since $\Lambda$ is unitary, the
	substrings $A_1, \ldots, A_r$ must lie inside the $q'$--string (because the
	$(p - p')$--part and the $(q - q')$--part do not share any coordinates, see Lemma \ref{sing upq l1}). Hence, the
	$q'$--string must be contained in the string \eqref{sing upq eq1}. But then
	$\Lambda_{p'} - \Lambda_{n - q' + 1} \ge 2$, which contradicts the assumption
	that $\Lambda$ is unitary, by the unitarity condition \eqref{unit cond}.
	
	Therefore, the $p'$--string is contained in the string
	$(C_1 A_1 B_1 A_2 B_2 \ldots B_{r-1} A_r C_2)$. Using a similar argument, we
	conclude that the $q'$--string is also contained inside the string
	$(C_1 A_1 B_1 A_2 B_2 \ldots B_{r-1} A_r C_2)$. 
	\epf
	
	\begin{cor}
		\label{sing upq cor2}
		Suppose that $\La^{\dom}$ is singular and that $\La$ is a unitary conjugate of $\La^{\dom}$. Let $D_1,D_2$ be as in \eqref{lad_su(p,q)-singular}.
		Then $D_1$ is contained in the $(q - q')$--part of $\La$ and $D_2$ is contained in the $(p - p')$--part of $\La$.
	\end{cor}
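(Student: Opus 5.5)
By Lemma \ref{sing upq l4}, both the $p'$--string and the $q'$--string of $\La$ lie inside the middle string $(C_1 A_1 B_1 \cdots A_r C_2)$. Hence no coordinate of $D_1$ or $D_2$ belongs to either string, and every such coordinate lies in the $(p-p')$--part or the $(q-q')$--part. Moreover, these coordinates are non-repeated. So the whole task is to decide, for each element of $D_1$ and of $D_2$, which of the two parts it sits in. My plan is to settle this by comparing it with the values of the strings, using the chain of inequalities from the proof of Lemma \ref{sing upq l1}.

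I use two orderings. The $(p-p')$--part consists of coordinates strictly smaller than $\La_{p'}$, because $\La$ is $\frk$-dominant regular on the left of the bar. Similarly, the $(q-q')$--part consists of coordinates strictly larger than $\La_{n-q'+1}$. Now take $x\in D_1$. Every coordinate of the middle string is smaller than every element of $D_1$, since $D_1$ lies above $C_1$ (or $A_1'$) with a gap. In particular $x>\La_{p'}$, so $x$ cannot lie in the $(p-p')$--part, which sits below $\La_{p'}$. It follows that $x$ lies in the $(q-q')$--part.

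The case $y\in D_2$ is symmetric. Here $y<\La_{n-q'+1}$, since $\La_{n-q'+1}$ lies in the middle string and $D_2$ lies below it. So $y$ cannot be in the $(q-q')$--part, which lies above $\La_{n-q'+1}$, and therefore $y$ is in the $(p-p')$--part.

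The step needing the most care is confirming that the $p'$-- and $q'$--strings are nonempty, so that $\La_{p'}$ and $\La_{n-q'+1}$ are actual coordinates lying in the middle string. This holds because $p',q'\ge 1$ in the unitarity description. The remaining point is that the middle string is nonempty and lies entirely between $D_1$ and $D_2$, which is built into the form \eqref{lad_su(p,q)-singular}. Beyond these checks the argument is just the two comparisons above.
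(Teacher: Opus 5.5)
Your proof is correct and follows the paper's route: the paper also derives the corollary directly from Lemma \ref{sing upq l4}, using that $D_1$ lies above and $D_2$ lies below both the $p'$-- and $q'$--strings. You additionally spell out the elimination step the paper leaves implicit, namely that a value above $\La_{p'}$ cannot lie in the $(p-p')$--part and a value below $\La_{n-q'+1}$ cannot lie in the $(q-q')$--part.
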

	\pf This follows immediately from Lemma \ref{sing upq l4}, since this lemma implies that the coordinates in $D_1$ are bigger than both the $p'$--string and the $q'$--string of $\La$, while the coordinates in $D_2$ are smaller than both the $p'$--string and the $q'$--string of $\La$.
	\epf
	
	Now we can restrict the form of parameters for singular unitary modules.
	
	\begin{thm} \label{su(p,q)-singular}
		If $\Lambda^{\mathrm{dom}}$ is not of the form \eqref{lad_su(p,q)-singular}, 
		then $\Lambda^{\mathrm{dom}}$ does not have any unitary conjugates.  
		If $\Lambda^{\mathrm{dom}}$ is of the form \eqref{lad_su(p,q)-singular}, 
		then the unitary conjugates of $\Lambda^{\mathrm{dom}}$ are exactly those among 
		the candidates  \eqref{su(p,q)-1}--\eqref{su(p,q)-7} 
		that have exactly $p$ coordinates on the left-hand side of the bar.
		\qed\end{thm}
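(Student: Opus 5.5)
The first assertion is already proved: Lemmas \ref{sing upq l1}, \ref{sing upq l2} and \ref{sing upq l3} show that a singular $\La^{\dom}$ with a unitary conjugate must have the form \eqref{lad_su(p,q)-singular}. So the plan concerns the second assertion. I would enumerate all $\La\in W^1\La^{\dom}$ satisfying \eqref{unit cond}, organized by the dichotomy of Lemma \ref{sing upq l2}: either $\La_{p'}-\La_{n-q'+1}\le 0$ with $II\neq\emptyset$, or $\La_{p'}-\La_{n-q'+1}=1$ with $II=\emptyset$.

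Corollary \ref{sing upq cor2} fixes the positions of $D_1$ (in the $(q-q')$--part) and $D_2$ (in the $(p-p')$--part). Lemma \ref{sing upq l4} places both strings inside $C_1A_1B_1\cdots A_rC_2$. Each repeated coordinate occurs once on each side of the bar. By Lemma \ref{sing upq l1}, each repeated coordinate lies in $I$, $II$ or $III$, and these satisfy $I>II>III$. The remaining task is to decide, for each block $C_1,A_i,B_i,C_2$, which of the four pieces ($p'$--string, $q'$--string, $(p-p')$--part, $(q-q')$--part) it can go to.

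\emph{Case $II\neq\emptyset$.} The $p'$--string and $q'$--string overlap in a string. Their union is a string containing all repeated coordinates, so the $A_i$ and the $B_i$ between them lie in this union. If $r\ge2$, a $B_i$ is a gap of single coordinates between two doubled blocks. It must belong to exactly one of the two strings, and that string must have an end there, which contradicts the other string covering both sides. I would therefore expect $r=1$, with $II=A_1$ possibly extended, or with $I$ and $III$ covering the top and bottom portions of $A_1'$. The only freedom is then how $C_1$ splits: a top segment goes to the $(q-q')$--part or is attached to the upper string. Similarly a bottom segment of $C_2$ goes to the $(p-p')$--part or to the lower string. Writing this out gives a few candidate families, which I would list as \eqref{su(p,q)-1}, \eqref{su(p,q)-2}, and so on.

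\emph{Case $II=\emptyset$.} The $p'$--string sits directly above the $q'$--string, and together they form a string without repetitions. Every repeated coordinate then lies in $I$ (the upper part, $p'$--string against $(q-q')$--part) or in $III$ (the lower part). This forces the doubled blocks above the junction point to have their second copies in the $(q-q')$--part, and those below to have theirs in the $(p-p')$--part. Since the $(q-q')$--part and the $(p-p')$--part are just decreasing sequences, the $B_i$ strictly inside the big string are determined. The extra freedom is in the distribution of the $B_i$ at the junction, with $i$ maximal such that $A_1B_1\cdots A_iB_i$ is a string, together with the distribution of $C_1$ and $C_2$. Each admissible choice gives a candidate, and these are the remaining formulas up to \eqref{su(p,q)-7}.

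Conversely, each candidate with its declared $p',q'$ visibly satisfies the strict $\frk$-dominance on each side and \eqref{unit cond}. So it is unitary exactly when its coordinate count makes it a genuine element of $W^1\La^{\dom}$, that is, when there are $p$ coordinates left of the bar. This explains the final clause of the statement.

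I expect the main obstacle to be exhaustiveness of the case split. One must show that no other placement of the $B_i$, and no other location of the junction, is compatible with both strings being strings while each repeated coordinate appears on both sides. I would attack this by tracking the maximal element of the $(p-p')$--part and the minimal element of the $(q-q')$--part relative to the blocks, using the chain of inequalities in the proof of Lemma \ref{sing upq l1}.
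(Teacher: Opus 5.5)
Your reduction of the first assertion to Lemmas \ref{sing upq l1}--\ref{sing upq l3} and your overall case split are fine. The case $II\neq\emptyset$, however, contains a false step. You argue that for $r\ge 2$ some $B_j$ forces one string to end between two doubled blocks, ``which contradicts the other string covering both sides'', and you conclude $r=1$. Nothing forces both strings to cover every $A_j$. Only the coordinates of $II$ lie in both strings. The other repeated coordinates can lie in $I$ (second copy in the $(q-q')$--part) or in $III$ (second copy in the $(p-p')$--part), exactly as you allow when $II=\emptyset$.

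The conclusion $r=1$ is in fact wrong. Take $p=2$, $q=3$, $\La^{\dom}=(5,5,4,3,3)$. The parameter $\La=(5,3\mid 5,4,3)$ has $p'=1$, $q'=3$ and $\La_{p'}-\La_{n-q'+1}=0$, so it is unitary, with $r=2$, $II=A_1$ and $III=A_2$. Likewise the paper's own example $(11,10,9,8;\,5,3,2,0,-1,-5\mid 18,16,15,12;\,10,9,\dots,0,-1)$ has $r=4$. Your enumeration would therefore miss the families \eqref{su(p,q)-1}--\eqref{su(p,q)-3} for $r\ge 2$, and the theorem would not follow. The configurations you propose instead cannot occur: neither ``$II=A_1$ possibly extended'', nor $I$ and $III$ occupying the top and bottom of $A_1'$ around $II$.

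The missing ingredient is the block-containment observation that drives the paper's proof. Because of the gaps $\La_{p'}-\La_{p'+1}\ge 2$ and $\La_{n-q'}-\La_{n-q'+1}\ge 2$, a block $A_j$ that meets the $p'$--string (resp.\ the $q'$--string) lies entirely inside it. Also, a string containing two blocks contains everything between them. Since the $B_j$ are nonrepeated, $II$ is then exactly one block $A_i$, for some $i\in\{1,\dots,r\}$. It follows that:
\begin{itemize}
\item the string $A_1B_1\cdots B_{i-1}A_i$ lies in the $p'$--string, with the second copies of $A_1,\dots,A_{i-1}$ in the $(q-q')$--part;
\item the string $A_iB_i\cdots B_{r-1}A_r$ lies in the $q'$--string, with the second copies of $A_{i+1},\dots,A_r$ in the $(p-p')$--part.
\end{itemize}
What remains free is the splitting of $C_1$ and $C_2$. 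In addition, the $q'$--string may rise above the shared block (only if $i=1$), or the $p'$--string may descend below it (only if $i=r$). This gives \eqref{su(p,q)-1}--\eqref{su(p,q)-4}.

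The same observation is also what makes your sketch of the case $II=\emptyset$ rigorous. The blocks split at a single index, and the junction lies inside $C_1$, inside some $B_i$, or inside $C_2$, with both pieces nonempty. This gives \eqref{su(p,q)-5}--\eqref{su(p,q)-7}.
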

	\pf 
	Given the last corollary, we know that $D_2$ must form the last coordinates left of the bar and $D_1$ must form the first coordinates right of the bar, i.e. $\La = (\ldots D_2|D_1\ldots).$ As we will see in the next paragraph, placement of $A_i$ coordinates is quite restricted so the rest of the proof is occupied by discussing the remaining freedom in the placement of coordinates from $C_1$ and $C_2$.
	
	If, for some $i$, the string $A_i$ and the $p'$--string share at least one coordinate, then all coordinates of $A_i$ must lie in the $p'$--string, since $A_i$ has to appear on the left-hand side of the bar and there is a gap between the $p'$--string and the $(p-p')$--part. Using a similar argument, we conclude that if, for some $i$, the string $A_i$ and the $q'$--string share at least one coordinate, then all coordinates of $A_i$ must lie in the $q'$--string.
	
	If for some $i, j \in \{1,\ldots,r\}$ with $i<j$, both substrings $A_i$ and $A_j$ lie inside the $p'$--string (resp.\ the $q'$--string), then the $p'$--string (resp.\ the $q'$--string) must also contain all substrings that lie between $A_i$ and $A_j$.
	Also, note that the intersection of the $p'$-- and $q'$--strings, if nonempty, is equal to exactly one of the substrings $A_i$, since the coordinates between different $A_i$ are nonrepeated.

	\textbf{Case of $\mathbf{II \neq \emptyset}$:} We have $II = A_i$ for some $i \in \{1, \ldots, r\}$. 
	Then the substrings $A_1, \ldots, A_i$ are contained in the $p'$--string, 
	the substrings $A_{i+1}, \ldots, A_r$ are contained in the $(p - p')$--part,
	the substrings $A_1, \ldots, A_{i-1}$ are contained in the $(q - q')$--part, 
	and the substrings $A_i, A_{i+1}, \ldots, A_r$ are contained in the $q'$--string. 
	
	Therefore, the string 
	\[
	A_1 B_1 \ldots B_{i-1} A_i
	\]
	is contained in the $p'$--string, and the string
	\[
	A_i B_i \ldots B_{r-1} A_r
	\]
	is contained in the $q'$--string. Hence, if $\La_{p'} \geq \La_n$, then  $\Lambda$ must be of the form
	\begin{align}\label{su(p,q)-1}
		\Lambda 
		= &( \underbrace{C_{12}\, A_1 B_1 \ldots B_{i-1} A_i}_{p' -\text{string}}; 
		A_{i+1} \ldots A_r C_{22}  D_2 
		\,|\, \\
		&\qquad\qquad\qquad\qquad D_1 C_{11} A_1 \ldots A_{i-1}; \underbrace{A_i B_i \ldots B_{r-1} A_r C_{21}}_{q'- \text{string}}) \notag,
	\end{align}
	or of the form
	\begin{align}\label{su(p,q)-2}
		\Lambda 
		= ( \underbrace{A_1 }_{p' -\text{string}}; 
		A_{2} \ldots A_r C_{22}  D_2 
		\,|\, 
		D_1; \underbrace{C_1 A_1 B_1 \ldots B_{r-1} A_r C_{21}}_{q' -\text{string}}),
	\end{align}
	where $C_1 = (C_{11}, C_{12})$ and $C_2 = (C_{21}, C_{22})$, with some of the $C_{ij}$ possibly empty. If $i = r$ and $C_2$ is not an empty string, then $C_{22} \neq C_2$, since there is a gap between the $p'$--string and the $(p - p')$--part. Similarly, if $i = 1$ and $C_1$ is not an empty string, then in \eqref{su(p,q)-1} $C_{11} \neq C_1$, since there is a gap between the $(q - q')$--part and the $q'$--string.
	
	If $\La_{p'} < \La_n$, then  $\Lambda$ must be of the form
	\eq\label{su(p,q)-3}
	\Lambda 
	= ( \underbrace{C_{12}\, A_1 B_1 \ldots B_{r-1} A_r C_2}_{p' -\text{string}}; 
	D_2 
	\,|\, 
	D_1 C_{11} A_1 \ldots A_{r-1}; \underbrace{A_r}_{q' -\text{string}}). 
	\eeq
	If $r = 1$ and $C_1$ is not an empty string, then $C_{11} \neq C_1$, since there is a gap between the $(q - q')$--part and the $q'$--string. Also, if $r = 1$, we have one more possibility
	\begin{equation}\label{su(p,q)-4}
		\Lambda 
		= (\underbrace{A_1  C_2}_{p' -\text{string}}; 
		D_2 
		\,|\, 
		D_1;  \underbrace{ C_1 A_1}_{q' -\text{string}}). 
	\end{equation}
	
	\textbf{Case of $\mathbf{II = \emptyset}$:} In this case $\Lambda_1, \ldots, \Lambda_{p'}, \Lambda_{n - q' + 1}, \ldots, \Lambda_n$ form a strictly decreasing string. 
	If none of the $A_i$ is contained in the $p'$--string, then the substrings 
	$A_1, A_2, \ldots, A_r$ are contained in the $(p-p')$--part and the $q'$--string. 
	In that case $\Lambda$ is of the form
	\eq\label{su(p,q)-5}
	\Lambda 
	= (\underbrace{C_{11}}_{p'\text{-string}};\ 
	A_1 A_2 \ldots A_r\, C_{22}\, D_2 
	\,|\, 
	D_1;\ \underbrace{C_{12}\, A_1 B_1 \ldots B_{r-1} A_r\, C_{21}}_{q'\text{-string}} ), 
	\eeq
	where $C_{11} \neq C_{1}$, since there is a gap between the $p'$--string and the $(p - p')$--part. Also, we have $C_{11} \neq \emptyset$.  Therefore, if $|C_1| \leq 1$, then the above candidate is not unitary.
	
	Now we consider the case when some of the $A_i$ are contained in the $p'$--string. 
	Let $i \in \{1, \ldots, r\}$ be the maximal index such that the substring $A_i$ is 
	contained in the $p'$--string. Then the substrings $A_1, \ldots, A_i$ are contained 
	in the $p'$--string and the $(q - q')$--part, while the substrings 
	$A_{i+1}, \ldots, A_r$ are contained in the $(p - p')$--part and the $q'$--string. 
	
	If $i \neq r$, then $\Lambda$ is of the form
	\begin{align}\label{su(p,q)-6}
		\Lambda 
		= (& \underbrace{C_{12}\, A_1 B_1 \ldots B_{i-1} A_i\, B_{i1}}_{p'\text{-string}};\ 
		A_{i+1} \ldots A_r\, C_{22}\, D_2 
		\,|\, \\
		& \qquad\qquad\qquad D_1\, C_{11}\, A_1 \ldots A_i;\ 
		\underbrace{B_{i2}\, A_{i+1} \ldots B_{r-1} A_r\, C_{21}}_{q'\text{-string}} ), \notag
	\end{align}
	where $B_i = (B_{i1}, B_{i2})$, and both $B_{i1}$ and $B_{i2}$ are nonempty strings, 
	since there is a gap between the $p'$--string and the $(p - p')$--part, and between 
	the $(q - q')$--part and the $q'$--string. Therefore, if $|B_i| = 1$, then the above candidate is not unitary. 
	
	If $i = r$, then $\La$ is of the form
	\eq\label{su(p,q)-7}
	\Lambda 
	= ( \underbrace{C_{12}\, A_1 B_1 \ldots B_{r-1} A_r C_{21}}_{p' -\text{string}}; 
	D_2 
	\,|\,  D_1 C_{11} A_1 \ldots A_{r}; \underbrace{C_{22}}_{q' -\text{string}}), 
	\eeq
	where $C_{22} \neq C_2$, since there is a gap between the $(q - q')$--part and the $q'$--string.  Therefore, if $|C_2| \leq 1$, then the above candidate is not unitary.
	\epf
	
	
	\begin{ex}
		{\rm
			Let $p = 10, q = 16$ and let
			\[
			\La^{\dom} = (18, 16, 15;12, 11, 10, 10, 9, 9, 8, 8, 7, 6, 5, 5, 4, 3, 3, 2, 2, 1, 0, 0, -1, -1; -5).
			\]
			Then 
			\begin{align*}
				A_1^{'} & = (10, 10, 9, 9, 8, 8), \, B_1 = (7, 6), A_2^{'} = (5, 5), B_2 = (4), \\
				A_3^{'} & = (3, 3, 2, 2), B_3 = (1), A_4^{'} = 
				(0, 0, -1, -1),
			\end{align*}
			and $A_1 = (10, 9, 8), A_2 = (5), A_3 = (3, 2), A_4 = (0, -1)$,
			$C_1 = (12, 11), C_2 = \emptyset, D_1 = (18, 16, 15)$, and $D_2 = (-5)$. 
			Since $|A_1| + |A_2| + |A_3| + |A_4| = 8$ and $p = 10$, and since $D_2$ must appear on 
			the left-hand side of the bar if $\Lambda$ is unitary, it follows that, besides the 
			coordinates $10, 9, 8, 5, 3, 2, 0, -1, -5$, there is room for only one more coordinate 
			on the left-hand side. Therefore, the string $B_1$ cannot be contained in the $p'$--string, 
			and either the $p'$-string contains only $A_1$ while $A_2, A_3, A_4$ lie in the $(p-p')$--part, 
			or all substrings $A_1, A_2, A_3, A_4$ lie in the $(p-p')$--part.
			
			If all the substrings $A_1, A_2, A_3, A_4$ are inside the $(p - p')$--part, 
			then the $p'$--string has only one coordinate. Since $D_1$ is inside the 
			$(q - q')$--part, the $p'$--string can be either $(12)$ or $(11)$. However, there must be a gap between the $p'$--string and the $(p - p')$--part, so the $p'$--string must be $(12)$. So
			\[
			\Lambda = \mathbf{(12;\ 10, 9, 8, 5, 3, 2, 0, -1, -5 \mid 
				18, 16, 15;\ 11, 10, 9, 8, 7, 6, 5, 4, 3, 2, 1, 0, -1)}.
			\]
			This parameter is of type \eqref{su(p,q)-5}.
			
			If the $p'$--string contains $A_1$ while $A_2, A_3, A_4$ lie in the $(p - p')$--part, 
			then, since $B_1$ is not entirely contained in the $p'$--string, the $p'$--string must be 
			$(10, 9, 8, 7)$, or $(11, 10, 9, 8)$, or $(10, 9, 8)$.
			
			If the $p'$--string is $(10, 9, 8, 7)$, then 
			\[
			\Lambda = \mathbf{(10, 9, 8, 7; \ 5, 3, 2, 0, -1, -5 \mid 
				18, 16, 15, 12, 11, 10, 9, 8;\ 6, 5, 4, 3, 2, 1, 0, -1)}.
			\]
			This parameter is of type \eqref{su(p,q)-6}.
			
			If the $p'$--string is $(11, 10, 9, 8)$, then 
			\[
			\Lambda = \mathbf{(11,10, 9, 8; \ 5, 3, 2, 0, -1, -5 \mid 
				18, 16, 15, 12;\ 10, 9, 8, 7, 6, 5, 4, 3, 2, 1, 0, -1)}.
			\]
			
			This parameter is of type \eqref{su(p,q)-1}.
			
			If the $p'$--string is $(10, 9, 8)$, then $18, 16, 15, 12, 11$ are on the right-hand side of the bar. Also  
			$7$ must be on the right-hand side of the bar; namely, $7$ is not contained in the $p'$--string and there is a gap 
			between the $p'$-string and the $(p - p')$--part, so $7$ cannot be on the 
			left-hand side of the bar. This means that exactly one of the coordinates $6, 4, 1$ must be on the left-hand side of the bar. Therefore
			\begin{align*}
				\Lambda \in  \{(10, 9, 8; \ \mathbf{6}, 5, 3, 2, 0, -1, -5 \mid 
				18, 16, 15, 12, 11, 10, 9, 8, 7; \ 5, 4, 3, 2, 1, 0, -1), \\
				(10, 9, 8; \ 5, \mathbf{4}, 3, 2, 0, -1, -5 \mid 
				18, 16, 15, 12, 11, 10, 9, 8, 7, 6, 5; \, 3, 2, 1, 0, -1) \\
				(10, 9, 8; \ 5, 3, 2, \mathbf{1}, 0, -1, -5 \mid 
				18, 16, 15, 12, 11, 10, 9, 8, 7, 6, 5, 4, 3, 2; \ 0, -1), \}
			\end{align*}
			but none of these is unitary. Hence, there are no unitary parameters for which $p' = (10, 9, 8)$.
			
			Therefore, among the $\binom{10}{2} = 45$ $\frk$--dominant regular integral parameters conjugate to $\La^{\dom}$, only three are unitary. 
		}
	\end{ex}

	\section{The case of $\frso^{*}(2n)$}
	\label{sec:soz(2n)}
	In this case $\frg = \frso(2n, \bbC)$ and $\frk = \frgl(n, \bbC) $. We use standard coordinates and the standard positive root system. The positive compact roots are
	\[
	\eps_i - \eps_j, \quad 1 \leq i < j \leq n
	\]
	and the positive noncompact roots are
	\[
	\eps_i + \eps_j, \quad 1 \leq i < j \leq n.
	\]
	Therefore, the half sum of positive roots is 
	\[
	\rho = \left ( n-1, n-2,  \ldots,  0 \right). 
	\]
	As before, we will denote by $\La^{\dom}$ the $\frg$--dominant representative of the infinitesimal character $\La$, i. e., we have
	\[
	\La^{\dom}_1 \geq \La^{\dom}_2 \geq \ldots \geq \La^{\dom}_{n-1} \geq |\La^{\dom}_{n}|. 
	\]
	All parameters $\La$ corresponding to highest weight $(\frg, K)$-modules must be dominant regular for $\frk$. Therefore, we have\footnote{Note that this condition allows only for up to two nonzero coordinates to have the same absolute value.}
	\[
	\La_1 > \La_2 > \ldots >  \La_{n}.
	\]

	\begin{rem}\label{candidates}
		It will be convenient to consider as candidates not only conjugates of the parameter $\La^{\dom}$, but all 
		$\Lambda$ whose coordinates have the same absolute values as those of $\Lambda^{\mathrm{dom}}$. Among these, we will select the unitary ones, and for each such unitary $\Lambda$ we will determine whether it is conjugate to $\Lambda^{\mathrm{dom}}$ by counting the number of sign changes between $\La^{\dom}$ and $\La$; this number has to be even.\footnote{Of course sign change acts trivially on zero: $-0 = 0$ and therefore the number of visible sign changes can be odd.}
	\end{rem}
	
	The integrality of the parameter $\Lambda^{\mathrm{dom}}$ means that either all its coordinates are  integers, or all its coordinates are half-integers.
	
	\subsection{Integer coordinates}
	
	We will first deal with integer coordinates. In \cite{PPST3} it is shown that there are two series of unitary parameters -- the ``$q$-case'' and the ``$p$-case''. We first characterize these unitary parameters in a more convenient way depending on whether $0$ is among the coordinates or not and then determine which Weyl-group conjugates of a given $\La^{\dom}$ are unitary. We conclude this subsection with distinguishing the unitary points in the Hasse diagram.
	
	\cite[Theorem 5.6.]{PPST3} 
	implies that if $\la$ is in Case 1, that is, such that
	\[
	\la_1 > \la_2 = \ldots = \la_q, \quad q \in [2, \ldots, n]
	\]
	and $\la_q > \la_{q+1}$ in case $q < n$, then $L(\la)$ is unitary if and only if
	\[
	\la_1 + \la_2 \leq -2n + q + 2.
	\]
	We will refer to this case as the ``$q$-case".
	In terms of $\La = \la + \rho$, $\La$ is of the form
	\[
	\La=(\La_1, \underbrace{ \La_2, \La_2 - 1, \dots,\La_2 - (q-2)}_{q - 1},\La_{q+1},\dots,\La_n)
	\]
	for some $q\in[2,n]$, with $\La_1 >\La_2 + 1$ and $\La_q >\La_{q+1} + 1$ if $q<n$ . Then $\La$ is unitary if and only if
	\[
	\La_1 + \La_2 \leq q - 1.
	\]
	Now we have two cases.  
	
	\textbf{Case 1.1:} There does not exist a coordinate equal to $0$ in $\La^{\dom}$. Then $(\La_2, \ldots, \La_q)$ is a string that does not contain $0$. Therefore, either all $\La_2, \ldots, \La_q$ are positive, or all are negative. If $\La_2, \ldots, \La_q$ are positive, or equivalently $\La_q > 0$, then $\La_2 - (q - 2) \geq 1$ and 
	\[
	\La_1 + \La_2 \geq \La_2 + 2 + \La_2 \geq 2(q-1) + 2 > q - 1.
	\]
	Therefore, $\La$ is not unitary. 
	
	If $\La_2, \ldots, \La_q$ are all negative, or equivalently $\La_2 < 0$, then $\La$ is unitary if and only if $\La_1 + \La_2 \leq q - 1$, or equivalently $\La_1 \in \{ \La_2 + 2, \ldots, - \La_q + 1\}$.

	\textbf{Case 1.2:} There exists a coordinate equal to $0$ in $\La^{\dom}$. Let $x$ be the maximal number such that the coordinates of $\La^{\dom}$ include $0, 1, 1, \ldots, x, x$ ($x=0$ if $\La^{\dom}$ does not include coordinates 
	$1, 1$). Then $\La$ has to contain the string $x, x-1, \ldots, -x$. Now we have three cases.
	
	\textbf{Case 1.2.1:} If $\La_1 = 0$, then $x = 0$,  $\La_2 \leq -2$ and $\La_1 + \La_2 \leq -2 < q - 1$, so $\La$ is unitary.
	
	\textbf{Case 1.2.2:} If $q < n$ and the string $x, x-1, \ldots, -x$ occurs within the group of coordinates $\La_{q + 1}, \ldots, \La_n$, then $\La_q \geq \La_{q + 1} + 2 \geq x + 2 \geq 2$ and then $\La_2 = \La_q + (q-2) \geq q$. Therefore, 
	\[
	\La_1 + \La_2\geq 2\La_2+2\geq 2q+2>q-1,
	\]
	so $\La$ is not unitary.
	
	\textbf{Case 1.2.3:} If the string $x, x-1, \ldots, -x$ occurs inside the string $\La_2, \ldots, \La_q$, then due to the maximality of the number $x$, the string $x, x-1, \ldots, -x$ is either the rightmost part of $\La_2, \ldots, \La_q$, or the leftmost part of $\La_2, \ldots, \La_q$. In case the string $x, x-1, \ldots, -x$ is the rightmost part of $\La_2, \ldots, \La_q$, then $\La_q = -x$ and $2x + 1 \leq q-1$. Therefore we have
	\[
	\La_1 + \La_2 \geq \La_2 + 2 + \La_2 = 2(\La_q + (q-2)) + 2 = 2q-2 - 2x > q-1,
	\]
	so $\La$ is nonunitary.
	
	In case the string $x, x-1, \ldots, -x$ is the leftmost part of, and not all of, $\La_2, \ldots, \La_q$, then $\La_2 = x$ and $2x + 1 \leq q-2$ and the unitarity condition $\La_1 + \La_2 \leq q-1$ is equivalent to $\La_1 \in \{ \La_2 + 2, \ldots, 1 - \La_q\}$.
	
	We have proved the following proposition. 
	
	\begin{prop}
		\label{so*-case-1}  
		Suppose the coordinates of $\La^{\dom}$ are integers. 
		Let $\La$ be a $\frk$--regular conjugate of $\La^{\dom}$ of the form 
		\[
		\La=(\La_1, \underbrace{ \La_2, \La_2 - 1, \dots,\La_2 - (q-2)}_{q - 1},\La_{q+1},\dots,\La_n)
		\]
		for some $q\in[2,n]$, with $\La_1 >\La_2 + 1$ and $\La_q >\La_{q+1} + 1$ if $q<n$.
		\begin{itemize}
			\item [(1)] If $0$ is not a coordinate of $\La^{\dom}$, then $\La$ is unitary if and only if $\La_2 < 0$ and $\La_1 \in \{ \La_2 + 2, \ldots, 1 - \La_q\}$.
			\item [(2)] If $0$ is a coordinate of $\La^{\dom}$, let $x$ be the maximal number such that the coordinates of $\La^{\dom}$ include $0, 1, 1, \ldots, x, x$ ($x=0$ if $\La^{\dom}$ does not include coordinates 
			$1, 1$). Then $\La$ is unitary if and only if $\La_1 = x = 0$ or the string $x, x-1, \ldots, -x$ is the leftmost part of, and not all of, $\La_2, \ldots, \La_q$ and $\La_1 \in \{ \La_2 + 2, \ldots, 1 - \La_q\}$.
		\end{itemize}
	\end{prop}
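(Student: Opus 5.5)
The proposition is essentially a repackaging of the case analysis carried out just before it, so the plan is to start from the unitarity criterion for the $q$-case taken from \cite[Theorem 5.6.]{PPST3}, in parameter form: $\La$ of the displayed shape, with the gaps $\La_1>\La_2+1$ and $\La_q>\La_{q+1}+1$, is unitary if and only if $\La_1+\La_2\leq q-1$. Since $\La_2-\La_q=q-2$, we can rewrite $q-1=\La_2-\La_q+1$, so the condition becomes $\La_1\leq 1-\La_q$. Combined with the gap condition $\La_1\geq\La_2+2$ (integrality), this gives exactly $\La_1\in\{\La_2+2,\dots,1-\La_q\}$ whenever it is satisfiable. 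Everything then reduces to deciding, from the sign pattern of the string $\La_2,\dots,\La_q$, when this interval can be nonempty. I will use that $\frk$-regularity forces the coordinates of $\La$ to be distinct, and that absolute values coincide with those of $\La^{\dom}$.

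For (1), with $0$ absent, the string $\La_2,\dots,\La_q$ lies entirely on one side of $0$. If $\La_q>0$, then $\La_q\geq1$ and $\La_1\geq\La_2+2$ give $\La_1+\La_2\geq 2\La_2+2\geq 2(q-1)+2>q-1$, so $\La$ is not unitary. If $\La_2<0$, the criterion is exactly the interval condition above. This gives (1).

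For (2), I define $x$ as in the statement. Since $0,\pm1,\dots,\pm x$ must all occur in $\La$ (the pairs $k,k$ in $\La^{\dom}$ must become $k,-k$ by regularity), $\La$ contains the string $x,\dots,-x$. I split according to where $0$ sits. If $\La_1=0$, the gap forces $\La_2\leq -2$, which is compatible only with $x=0$, and the inequality holds trivially. If $0$ lies among $\La_{q+1},\dots,\La_n$, then $\La_q\geq x+2\geq2$, and the same estimate as in the positive case excludes unitarity. If $0$ lies in $\La_2,\dots,\La_q$, the maximality of $x$ forces the block $x,\dots,-x$ to sit at an end of the string: otherwise an extra $x+1$ and $-(x+1)$ would both appear, contradicting maximality. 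If it is the right end, then $\La_q=-x$ with $2x+1\leq q-1$, and $\La_1+\La_2\geq 2q-2-2x>q-1$, so $\La$ is not unitary. If it is the left end, then $\La_2=x$; requiring it to be not the whole string is exactly what makes $\La_2+2\leq 1-\La_q$, i.e. what makes the interval nonempty. The criterion is then again the interval condition.

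The main obstacle is the end-position argument in the last case. Here I will carefully justify, from the maximality of $x$ and from the fact that repeated absolute values in $\La^{\dom}$ must appear with opposite signs in $\La$, that $x,\dots,-x$ cannot sit strictly inside $\La_2,\dots,\La_q$. I will also check the boundary subcase in which the block is the whole string. There $\La_q=-x$ and $\La_2=x$, and the right-end estimate $2q-2-2x>q-1$ still applies, since $q-1=2x+1$ gives $2x+1>2x+1-1$.
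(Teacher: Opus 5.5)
Your proposal is correct and follows essentially the same route as the paper. You rewrite the criterion $\La_1+\La_2\le q-1$ as $\La_1\le 1-\La_q$, use the sign argument for (1), and for (2) split by the position of $0$ (at $\La_1$, in the tail, or inside $\La_2,\dots,\La_q$ at its right or left end by maximality of $x$), exactly as in Cases 1.1 and 1.2.1--1.2.3 preceding the proposition; noting that ``not all of'' is exactly nonemptiness of $\{\La_2+2,\dots,1-\La_q\}$, and treating the whole-string case separately (the paper folds it into the ``rightmost'' case), are only cosmetic refinements.
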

	
	\bigskip
	
	On the other hand, \cite[Theorem 5.9. and Theorem 5.12.]{PPST3} 
	imply that if $\la$ is in Case 2, that is, such that 
	\[
	\la=(\underbrace{\la_1,\dots,\la_1}_p,\la_{p+1},\dots,\la_n)
	\]
	for some $p\in[2,n]$, with $\la_1>\la_{p+1}$ if $p<n$,
	then $L(\la)$ is unitary if and only if
	\[
	\lambda_1< -n+p-\left[\frac{p}{2}\right]+1=-n+\left[\frac{p+1}{2}\right]+1
	\]
	or $\la_1=-n+p-i+1$ for some $1\leq i\leq \left[\frac{p}{2}\right]$. We will refer to this case as the ``$p$-case". Since we assumed $\la_1 \in \bbZ$, $L(\la)$ is unitary if and only if $\la_1 \leq -n + p$.
	In terms of $\La = \la + \rho$, $\La$ is of the form
	\[
	\La=(\underbrace{\La_1, \La_1 - 1, \dots,\La_1 - (p-1)}_p,\La_{p+1},\dots,\La_n)
	\]
	for some $p\in[2,n]$, with $\La_1 - p >\La_{p+1}$ if $p<n$. Then $\La$ is unitary if and only if
	\[
	\La_1 \leq p - 1.
	\]
	We have two subcases.
	
	\textbf{Case 2.1:} $0$ is not a coordinate of $\Lambda^{\dom}$. In this case, for a $\frk$--regular $\Lambda \in W^1 \La^{\dom}$, either all coordinates of $\La$ are negative, or there is at least one positive coordinate of $\La$.
	
	If all coordinates of $\La$ are negative, then 
	\[
	\La_1 < 0 \leq p-1,
	\]
	so $\La$ is unitary.
	
	If there is at least one positive coordinate of $\La$, then since the coordinates of $\La$ decrease, $\La_1$ must be positive and since $\La_1, \ldots, \La_p$ is a string that does not contain $0$, then $\La_p > 0$. Therefore we have
	\[
	\La_1 = \La_p + (p - 1) \geq 1  + (p-1) = p.
	\]
	We conclude that $\La$ is nonunitary in this case.  
	
	\textbf{Case 2.2:} $0$ is a coordinate of $\Lambda^{\dom}$. 
	If the string $\La_1, \ldots, \La_p$ contains $0$, then $\La_p \leq 0$, i.e. $\La_1 - (p-1) \leq 0$. Therefore $\La_1 \leq p-1 $ so $\La$ is unitary. If the string $\La_1, \ldots, \La_p$ does not contain $0$, then $0 \in \{ \La_{p + 1}, \ldots, \La_{n}\}$. Therefore we have $\La_{p+1} \geq 0$ and $\La_1 - p > \La_{p+1} \geq 0$. Therefore, $\La_1 > p$ and $\La$ is not unitary.
	
	\begin{prop} 
		\label{so*-case-2}
		Suppose the coordinates of $\La^{\dom}$ are integers. 
		Let $\La$ be a $\frk$--regular conjugate of $\La^{\dom}$ of the form
		\[
		\La=(\underbrace{\La_1, \La_1 - 1, \dots,\La_1 - (p-1)}_p,\La_{p+1},\dots,\La_n),
		\]
		for some $p\in[2,n]$, with $\La_1 - p >\La_{p+1}$ if $p<n$.
		\begin{itemize}
			\item[(1)] If $0$ is not a coordinate of $\La^{\dom}$, then  $\La$ is unitary if and only if all coordinates of $\La$ are negative. 
			\item[(2)] If $0$ is a coordinate of $\La^{\dom}$, then $\La$ is unitary if and only if the string $\La_1, \ldots, \La_p$ contains $0$.
		\end{itemize}
	\end{prop}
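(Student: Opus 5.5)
The statement to prove is Proposition \ref{so*-case-2}. The plan is to reduce unitarity to a single numerical inequality and then decide that inequality by a sign analysis of the string $\La_1,\dots,\La_p$.

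\textbf{Step 1: reduce to one inequality.} Take the $p$-case description quoted from \cite[Theorem 5.9 and Theorem 5.12]{PPST3}. For integral $\la$, unitarity is equivalent to $\la_1\le -n+p$. Since $\rho_1=n-1$ and $\La=\la+\rho$, this becomes
\[
\La_1\le p-1,
\]
where $\La=(\La_1,\La_1-1,\dots,\La_1-(p-1),\La_{p+1},\dots,\La_n)$ is $\frk$-regular, so all coordinates are strictly decreasing, and $\La_1-p>\La_{p+1}$ when $p<n$. Note that $\La_1\le p-1$ is the same as $\La_p=\La_1-(p-1)\le 0$. So unitarity holds exactly when the string $\La_1,\dots,\La_p$ reaches down to $0$ or below.

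\textbf{Step 2: case (1), no zero coordinate.} The coordinates are nonzero integers, so the string $\La_1,\dots,\La_p$ lies entirely on one side of $0$.
\begin{itemize}
\item If every coordinate of $\La$ is negative, then $\La_1<0\le p-1$, and $\La$ is unitary.
\item Otherwise some coordinate is positive. By strict decrease $\La_1>0$, so the whole string is positive. Then $\La_p\ge 1$, hence $\La_1\ge p$, and $\La$ is not unitary.
\end{itemize}

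\textbf{Step 3: case (2), zero is a coordinate.}
\begin{itemize}
\item If $0$ lies in the string $\La_1,\dots,\La_p$, then $\La_p\le 0$, so $\La_1\le p-1$ and $\La$ is unitary.
\item If $0$ is not in the string, then $0=\La_j$ for some $j>p$ (it cannot come before $\La_1$). Then $\La_{p+1}\ge 0$, and the gap condition gives $\La_1>p+\La_{p+1}\ge p$. So $\La$ is not unitary.
\end{itemize}

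The only step needing care is translating the \cite{PPST3} condition correctly. One must check that the non-integral alternatives, namely the strict inequality with $[\frac{p+1}{2}]$ together with the discrete points $\la_1=-n+p-i+1$, collapse for integral $\la_1$ to exactly $\la_1\le -n+p$. This holds because $-n+[\frac{p+1}{2}]+1$ up to $-n+p$ are precisely the listed discrete points. After that, everything is elementary sign bookkeeping.
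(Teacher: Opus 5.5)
Your proposal is correct and follows essentially the same route as the paper. Both arguments reduce the quoted $p$-case criterion for integral parameters to $\La_1\le p-1$, equivalently $\La_p\le 0$. Both then split into the same sign cases: when $0$ is absent, all coordinates negative versus some coordinate positive; when $0$ is present, $0$ in the $p$-string versus $0$ in the tail, where the gap $\La_1-p>\La_{p+1}$ does the work. Your explicit check that the discrete points $\la_1=-n+p-i+1$ cover exactly the integers from $-n+[\frac{p+1}{2}]+1$ up to $-n+p$ is a detail the paper states without comment.
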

	
	\begin{thm}\label{so*-zeroin}
		Suppose the coordinates of $\La^{\dom}$ are integers, and 
		that $0$ is a coordinate of $\Lambda^{\mathrm{dom}}$. 
		Let $x$ be the maximal integer such that 
		$0, 1, 1, \ldots, x, x$ occur among the coordinates of $\Lambda^{\mathrm{dom}}$ ($x=0$ if $\La^{\dom}$ does not include coordinates 
		$1, 1$). 
		Let $u \ge 0$ be the maximal integer such that 
		$x+1, \ldots, x+u$ are coordinates of $\Lambda^{\mathrm{dom}}$ 
		(with $u = 0$ when $x+1$ is not a coordinate of $\Lambda^{\mathrm{dom}}$).
		
		\begin{enumerate}
			
			\item If there is a repeated coordinate among the coordinates of $\Lambda^{\mathrm{dom}}$ that are greater than $x+u$, then $\Lambda^{\mathrm{dom}}$ has no unitary conjugates.
			
			\item Suppose that among the coordinates $x+2, \ldots, x+u$ the repeated ones are 
			$x+v_1, \ldots, x+v_r$ with $r \ge 2$, and that among the coordinates of 
			$\Lambda^{\mathrm{dom}}$ greater than $x+u$ there are no repeated coordinates. 
			Then the unitary parameters are of the form
			\begin{align*}
				\Lambda 
				= (& x+a, \ldots, x, x-1, \ldots, -x;\ 
				-x - v_1, \ldots, -x - v_r, \\
				& -x - a - 1, \ldots, -x - u,\ 
				-\Lambda^{\mathrm{dom}}_{n - (2x + u + r + 1)}, 
				\ldots, -\Lambda^{\mathrm{dom}}_{1}),
			\end{align*}
			where $v_r \le a \le u$. If $a = u$, then the string $ -x - a - 1, \ldots, -x - u$ is empty. 
			
			\item If among the coordinates $x+2, \ldots, x+u$ there is exactly one repeated 
			coordinate $x+v$ (with $v \ne 1$), and if among the coordinates of 
			$\Lambda^{\mathrm{dom}}$ greater than $x+u$ there are no repeated coordinates, 
			then the unitary parameters are of the form
			\[
			\Lambda 
			= (x+v;\ 
			x, x-1, \ldots, -x,\ -x-1, \ldots, -x-u;\ 
			-\Lambda^{\mathrm{dom}}_{n-(2x + u + 2)}, 
			\ldots, -\Lambda^{\mathrm{dom}}_{1})
			\]
			and
			\begin{align*}
				\Lambda 
				= (& x+a, \ldots, x, x-1, \ldots, -x;\ -x - v, \\
				& -x - a - 1, \ldots, -x - u,\ 
				-\Lambda^{\mathrm{dom}}_{n-(2x + u + 2)}, 
				\ldots, -\Lambda^{\mathrm{dom}}_{1}),
			\end{align*}
			where $v \le a \le u$. If $a = u$, then the string $ -x - a - 1, \ldots, -x - u$ is empty.
			
			\item If none of the coordinates greater than $x$ is repeated, 
			then the unitary parameters are of one of the following four forms:
			
			(a) If $u>1$, then for every $a\in[1,u-1]$ there is a unitary parameter in the $q$-case:
			\begin{align*}
				\La=&(x + a + 1;\ 
				x, \ldots, -x,\ -x-1, \ldots, -x-a;\\
				&\qquad \qquad -x - a - 2, \ldots, -x - u,\ 
				-\Lambda^{\mathrm{dom}}_{n-(2x + u + 1)}, 
				\ldots, -\Lambda^{\mathrm{dom}}_{1})
			\end{align*}
			
			(b) If $x = 0$ and $\La_{n-1}^{\mathrm{dom}}>1$ (so $u=0$), then there is one unitary parameter in the $q$-case:
			\[
			\La=(0;\ -\Lambda^{\mathrm{dom}}_{n-1}, \ldots, -\Lambda^{\mathrm{dom}}_{1}).
			\]
			
			(c) If $u\geq 1$, then for any $a\in[1,u]$ there is a unitary parameter in the $p$-case
			\begin{align*}
				\Lambda 
				= &(x+a, \ldots, x, x-1,\ldots,-x;\\ 
				& \qquad -x - a - 1, \ldots, -x - u,\ 
				-\Lambda^{\mathrm{dom}}_{n-(2x + u + 1)}, 
				\ldots, -\Lambda^{\mathrm{dom}}_{1}).
			\end{align*}
			
			(d) For any remaining cases of $\La^{\dom}$ such that $0$ is among its coordinates, there is a unitary parameter in the $p$-case
			\[
			\Lambda = ( x, x-1, \ldots, -x,\ -x-1, \ldots, -x-u;\ 
			-\Lambda^{\mathrm{dom}}_{n-(2x + u + 1)}, 
			\ldots, -\Lambda^{\mathrm{dom}}_{1}).
			\]
		\end{enumerate}
	\end{thm}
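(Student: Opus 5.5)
We argue via Remark \ref{candidates}: rather than working with $W$-conjugates directly, we look at all $\frk$-regular $\La$ (strictly decreasing) whose coordinates have the same absolute values as those of $\La^{\dom}$. We select those that are unitary by Propositions \ref{so*-case-1} and \ref{so*-case-2}. Since $0$ is a coordinate, a sign change on $0$ is trivial, so every such candidate is automatically conjugate to $\La^{\dom}$ and no parity check is needed.

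\textbf{Step 1: structural constraints.} Strict decrease forces every repeated absolute value $y>0$ to occur as the pair $y,-y$, and forbids $0$ from repeating. Hence $\La$ contains $x,x-1,\dots,-x$. Every non-repeated value $y$ may a priori take either sign.

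In both unitary cases, $\La$ contains a long string $S$: $\La_2,\dots,\La_q$ in the $q$-case, $\La_1,\dots,\La_p$ in the $p$-case. The propositions force $S$ to begin at $x$ or higher and to pass through $0$. Concretely:
\begin{itemize}
\item In the $q$-case, Proposition \ref{so*-case-1}(2) requires either $\La_1=x=0$, or that $S$ starts exactly at $x$ and extends below $-x$.
\item In the $p$-case, Proposition \ref{so*-case-2}(2) requires $S\ni 0$, so $S=(x+a,\dots,-x,\dots)$ with $x+a\in\{x,\dots,x+u\}$, because the values $x+1,\dots,x+u$ are exactly those available as a consecutive run.
\end{itemize}

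Everything outside $S$ and outside $\La_1$ (in the $q$-case) is smaller than $S$. A positive value outside $S$ that exceeds $S$ is only possible for $\La_1$. So every remaining coordinate must be negative, giving the tail $-\La^{\dom}_{\cdot},\dots,-\La^{\dom}_1$.

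\textbf{Step 2: repeated coordinates.} A repeated value $y>x$ must appear as both $y$ and $-y$. Its positive copy must lie in $S$ or be $\La_1$, so $y\le x+u$, or $y=\La_1$ in the $q$-case. We then check the gap conditions:
\begin{itemize}
\item in the $q$-case, $\La_1>\La_2+1$ and $\La_1\le 1-\La_q$;
\item in the $p$-case, $\La_p>\La_{p+1}+1$;
\item the requirement that $S$ be a genuine string, so it cannot skip $-y$ for a repeated $y$.
\end{itemize}

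A repeated value above $x+u$ cannot sit in $S$. If it is $\La_1$, then $\La_1\le 1-\La_q$ forces $S$ to reach down to about $-(x+u)$. Since $S$ is a string it then contains $-(x+u+1)$, and this value is not available. This gives part (1).

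With $r\ge2$ repeated values $x+v_1,\dots,x+v_r$, only one of them can serve as $\La_1$, so the others force the positive top of $S$ to be at least $x+v_r$. This is the $p$-case with $a\ge v_r$, and the negatives $-x-v_i$ appear separately because the string below $-x$ must stop before the first gap. For $r=1$ we additionally get the $q$-case option $\La_1=x+v$. Checking the inequality $\La_1+\La_2\le q-1$ there uses the fact that the string after $x$ runs down to $-x-u$.

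\textbf{Step 3: the case with no repeats above $x$ (part (4)).} We enumerate by the position of the top of $S$ and of $\La_1$:
\begin{itemize}
\item $q$-case with $\La_1=x+a+1$ and $S$ ending at $-x-a$ gives (a). The gap condition $\La_q>\La_{q+1}+1$ forces $a+1$ to be skipped, and $\La_1\le 1-\La_q$ then reads $x+a+1\le x+a+1$. We need $a\le u-1$ so that $x+a+1$ exists, and $a\ge1$ for the gap.
\item The degenerate $q$-case $\La_1=x=0$ gives (b).
\item $p$-case strings starting at $x+a$ with $a\ge1$ give (c).
\item The $p$-case with $a=0$, where the string runs down to $-x-u$, gives (d).
\end{itemize}

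The main obstacle is the bookkeeping of the gap conditions at both ends of $S$. One must verify, for each choice of signs on $x+1,\dots,x+u$, that the string condition and the condition ``$\La_q>\La_{q+1}+1$'' (respectively ``$\La_p>\La_{p+1}+1$'') pin down exactly where $S$ ends. This should be handled by noting that the negatives $-x-1,\dots,-x-u$ form a contiguous run, with the holes created by the positive choices.
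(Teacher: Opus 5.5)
Your proposal is correct and follows essentially the same route as the paper. You reduce to the criteria of Propositions \ref{so*-case-1}(2) and \ref{so*-case-2}(2), note that every positive value above $x$ must either be $\La_1$ (in the $q$-case) or lie in the string through $0$, and use the absence of $x+u+1$, together with the absence of the negatives of positive non-repeated values, to pin down where the strings end; your explicit remark that the parity check is vacuous because $0$ is a coordinate is something the paper leaves implicit, and in part (1) you should state the precise bound $\La_1\ge x+u+2$, which gives $\La_q\le 1-\La_1\le -(x+u+1)$, instead of saying the string reaches ``about $-(x+u)$''.
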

	
	\begin{proof}
		(1) If among the coordinates of $\Lambda^{\mathrm{dom}}$ that are greater than 
		$x+u$ there is a repeated coordinate, then all of them must appear in the 
		$\mathfrak{k}$-regular conjugate $\Lambda$ with both the $+$ and the $-$ sign. 
		
		If $\Lambda$ is a unitary conjugate belonging to the $q$--case, then 
		$\Lambda_1 \neq 0$, the string $x, x-1, \ldots, -x$ is the leftmost part of,
		but not the entire string $\Lambda_2, \ldots, \Lambda_q$, and 
		$\Lambda_1 \in \{\Lambda_2 + 2, \ldots, 1 - \Lambda_q\}$. 
		Therefore, $\Lambda_1$ must be a repeated coordinate greater than $x+u$, while 
		the $q$--string is of the form 
		\[
		x, x-1, \ldots, -x, \ldots, -x - a,
		\qquad 0 \le a \le u .
		\]
		But then
		\[
		\Lambda_1 + \Lambda_q \ge (x+u+2) + (-x - a) = u + 2 - a \ge 2,
		\]
		which implies $\La_1+\La_2\geq q$, and we conclude $\Lambda$ is not unitary. 
		
		If $\Lambda$ is a unitary conjugate belonging to the $p$--case, then the string 
		$x, x-1, \ldots, -x$ cannot be the leftmost part of the $p$--string, since there are repeated coordinates of $\La^{\dom}$ which are greater than $x$. 
		Hence, it must be the rightmost part of the $p$--string. 
		But then all repeated coordinates of $\Lambda^{\mathrm{dom}}$ that are greater 
		than $x+u$ must appear in the $p$--string with the $+$ sign. 
		This forces the $p$--string to contain the coordinate $x+u+1$, so $x+u+1$ is a coordinate of $\La^{\dom}$ which   
		contradicts maximality of $u$. Therefore, $\Lambda^{\mathrm{dom}}$ has no unitary conjugates in this case.
		\smallskip
		
		(2) Suppose that among the coordinates $x+2, \ldots, x+u$ the repeated ones are 
		$x+v_1, \ldots, x+v_r$ with $r \ge 2$, and that among the coordinates of 
		$\Lambda^{\mathrm{dom}}$ greater than $x+u$ there are no repeated coordinates. 
		Then all of the coordinates $x+v_1, \ldots, x+v_r$ must appear in the 
		$\mathfrak{k}$-regular conjugate $\Lambda$ with both the $+$ and the $-$ sign.
		
		If $\Lambda$ is a unitary conjugate belonging to the $q$--case, then 
		$\Lambda_1 \neq 0$, the string $x, x-1, \ldots, -x$ is the leftmost part of,
		but not the entire string $\Lambda_2, \ldots, \Lambda_q$, and 
		$\Lambda_1 \in \{\Lambda_2 + 2, \ldots, 1 - \Lambda_q\}$.
		
		The coordinates $x+v_1, \ldots, x+v_r$ must appear among the coordinates of $\Lambda$, 
		but there is room for at most one coordinate in the first position. 
		Therefore, in this case there are no unitary parameters belonging to the 
		$q$--case.
		
		If $\Lambda$ is a unitary conjugate belonging to the $p$--case, then the string 
		$x, x-1, \ldots, -x$ cannot be the leftmost part of the $p$--string, since there 
		are repeated coordinates of $\Lambda^{\mathrm{dom}}$ greater than $x$. 
		Hence, it must be the rightmost part of the $p$--string.
		
		Therefore, the unitary parameters are as in the statement (2) of the Theorem, i.e., of the form
		\begin{align*}
			\Lambda 
			= (& x+a, \ldots, x, x-1, \ldots, -x;\ 
			-x - v_1, \ldots, -x - v_r, \\
			&   -x - a - 1, \ldots, -x - u,\ 
			-\Lambda^{\mathrm{dom}}_{n - (2x + u + r + 1)}, 
			\ldots, -\Lambda^{\mathrm{dom}}_{1}),
		\end{align*}
		where $v_r \le a \le u$. 
		If $a = u$, then the string $-x - a - 1, \ldots, -x - u$ is empty.
		\smallskip
		
		(3) If among the coordinates $x+2, \ldots, x+u$ there is exactly one repeated 
		coordinate $x+v$ (with $v \ne 1$), and if among the coordinates of 
		$\Lambda^{\mathrm{dom}}$ greater than $x+u$ there are no repeated coordinates, then, using the same argument as in the previous case ($r = 1$), we conclude that the unitary parameters in the $p$--case are 
		\begin{align*}
			\Lambda 
			= (& x+a, \ldots, x, x-1, \ldots, -x;\ 
			-x - v, \\
			&   -x - a - 1, \ldots, -x - u,\ 
			-\Lambda^{\mathrm{dom}}_{n-(2x + u + 2)}, 
			\ldots, -\Lambda^{\mathrm{dom}}_{1}),
		\end{align*}
		where $v \le a \le u$. 
		If $a = u$, then the string $-x - a - 1, \ldots, -x - u$ is empty.
		
		If $\La$ is a unitary parameter in the $q$--case, then
		$\Lambda_1 \neq 0$, the string $x, x-1, \ldots, -x$ is the leftmost part of,
		but not the entire string $\Lambda_2, \ldots, \Lambda_q$, and 
		$\Lambda_1 \in \{\Lambda_2 + 2, \ldots, 1 - \Lambda_q\}$. Since $x + v$ is a repeated coordinate, it must appear in $\Lambda$ with both $+$ and the $-$ sign. Therefore, the parameter is of the form 
		\[
		\Lambda 
		= (x+v;\ 
		x, x-1, \ldots, -x,\ -x-1, \ldots, -x-u;\ 
		-\Lambda^{\mathrm{dom}}_{n - (2x + u + 2)}, 
		\ldots, -\Lambda^{\mathrm{dom}}_{1}).
		\]
		\smallskip
		
		(4) Suppose that none of the coordinates greater than $x$ is repeated.
		
		If $\Lambda$ is a unitary parameter in the $q$--case and $u > 0$, then the 
		$q$--string is of the form 
		\[
		x, x-1, \ldots, -x,\ -x-1, \ldots, -x-a,
		\qquad 1 \le a \le u.
		\]
		Since $\Lambda_1 + \Lambda_q \le 1$, we have 
		\[
		x + 2 \le \Lambda_1 \le x + a + 1.
		\]
		Because none of the coordinates $x+2, \ldots, x+a$ is a repeated coordinate of 
		$\Lambda^{\mathrm{dom}}$, we must have 
		\[
		\Lambda_1 \notin \{x+2, \ldots, x+a\}.
		\]
		Therefore $\Lambda_1 = x + a + 1$ for some $1 \le a < u$ 
		(the case $a = u$ is impossible, since $x + u + 1$ is not a coordinate of 
		$\Lambda^{\mathrm{dom}}$), and $\La$ is as in (a), i.e., 
		\begin{align*}
			\Lambda 
			= &(x + a + 1;\ 
			x, \ldots, -x,\ -x-1, \ldots, -x-a;\\
			&\qquad -x - a - 2, \ldots, -x - u,\ 
			-\Lambda^{\mathrm{dom}}_{n - (2x + u + 1)}, 
			\ldots, -\Lambda^{\mathrm{dom}}_{1}).
		\end{align*}
		If $a = u - 1$, then the string $-x - a - 2, \ldots, -x - u$ is empty.
		
		If $x = 0$ and $\Lambda_{n-1}^{\mathrm{dom}} > 1$, then there is only one 
		unitary parameter in the $q$--case, namely the one from (b):
		\[
		\Lambda = (0;\ -\Lambda_{n-1}^{\mathrm{dom}}, \ldots, -\Lambda_{1}^{\mathrm{dom}}).
		\]
		
		If $\Lambda$ is a unitary parameter belonging to the $p$--case, then the string 
		$x, x-1, \ldots, -x$ is either the rightmost or the leftmost part of the 
		$p$--string.
		
		If it is the rightmost part of the $p$--string, then $\Lambda$ is as in (c):
		\begin{align*}
			\Lambda 
			= (&x+a, \ldots, x, x-1, \ldots, -x;\\ 
			& -x - a - 1, \ldots, -x - u,\ 
			-\Lambda^{\mathrm{dom}}_{n-(2x + u + 1)}, 
			\ldots, -\Lambda^{\mathrm{dom}}_{1}),
		\end{align*}
		where $1 \le a \le u$ if $u \geq 1$. If $a = u$, then the string $-x - a - 1, \ldots, -x - u$ is empty. 
		
		If the string $x, x-1, \ldots, -x$ is the leftmost part of the $p$--string, 
		then $\Lambda$ is as in (d):
		\begin{align*}
			\Lambda 
			= (&x, x-1, \ldots, -x,\ -x-1, \ldots, -x-u;\ 
			-\Lambda^{\mathrm{dom}}_{n-(2x + u + 1)}, 
			\ldots, -\Lambda^{\mathrm{dom}}_{1}).
		\end{align*}
	\end{proof}
	
	\begin{thm}\label{so*-zeroout}
		Suppose the coordinates of $\La^{\dom}$ are integers, and  that $0$ is not a coordinate of $\La^{\dom}$.  
		\begin{enumerate}
			\item If among the coordinates 
			$\Lambda_1^{\mathrm{dom}}, \Lambda_2^{\mathrm{dom}}, \ldots, 
			\Lambda_{n-1}^{\mathrm{dom}}, |\Lambda_n^{\mathrm{dom}}|$ 
			at least two are repeated, then $\Lambda^{\mathrm{dom}}$ has no unitary conjugates.
			
			\item If among the coordinates 
			$\Lambda_1^{\mathrm{dom}}, \Lambda_2^{\mathrm{dom}}, \ldots, 
			\Lambda_{n-1}^{\mathrm{dom}}, |\Lambda_n^{\mathrm{dom}}|$ 
			exactly one is repeated, and if $\Lambda^{\mathrm{dom}}$ satisfies the following
			two conditions:
			\begin{itemize}
				\item[(i)] $(\Lambda_1^{\mathrm{dom}}, \ldots, \Lambda_{n-1}^{\mathrm{dom}}, 
				|\Lambda_n^{\mathrm{dom}}|)$ ends with a string in which the repeated coordinate appears twice;
				\item[(ii)] either $\Lambda_n^{\mathrm{dom}} > 0$ and $n$ is odd, 
				or $\Lambda_n^{\mathrm{dom}} < 0$ and $n$ is even,
			\end{itemize}
			then $\Lambda^{\mathrm{dom}}$ has exactly one unitary conjugate: the one that 
			starts with the repeated coordinate, followed by the remaining coordinates of
			\[
			\Lambda_1^{\mathrm{dom}},\ \Lambda_2^{\mathrm{dom}},\ \ldots,\ 
			\Lambda_{n-1}^{\mathrm{dom}},\ |\Lambda_n^{\mathrm{dom}}|
			\]
			taken with a negative sign and arranged in strictly decreasing order.
			
			If $\Lambda^{\mathrm{dom}}$ does not satisfy the above two conditions, then it has 
			no unitary conjugates.
			
			\item 
			
			If none of the coordinates 
			$\Lambda_1^{\mathrm{dom}}, \Lambda_2^{\mathrm{dom}}, \ldots, 
			\Lambda_{n-1}^{\mathrm{dom}}, |\Lambda_n^{\mathrm{dom}}|$ 
			is repeated, i.e., $\La^{\dom}$ is regular for $\frg$, then:
			\begin{itemize}
				\item[(i)] If $\Lambda_{n}^{\mathrm{dom}} > 0$ and $n$ is even, or 
				$\Lambda_{n}^{\mathrm{dom}} < 0$ and $n$ is odd, 
				then there is exactly one unitary conjugate of $\Lambda^{\mathrm{dom}}$:
				\[
				\Lambda = (-|\Lambda_n^{\mathrm{dom}}|,\ 
				-\Lambda_{n-1}^{\mathrm{dom}},\ \ldots,\ 
				-\Lambda_1^{\mathrm{dom}}).
				\]
				
				\item[(ii)] If $\Lambda_{n}^{\mathrm{dom}} > 0$ and $n$ is odd, or 
				$\Lambda_{n}^{\mathrm{dom}} < 0$ and $n$ is even, 
				let $S$ denote the longest string at the end of
				$(\Lambda_1^{\mathrm{dom}}, \ldots, \Lambda_{n-1}^{\mathrm{dom}},             |\Lambda_n^{\mathrm{dom}}|)$. Then the unitary conjugates of $\La^{\dom}$ are exactly those parameters that begin with one of the coordinates in $S$, 
				followed by the remaining coordinates of
				\[
				\Lambda_1^{\mathrm{dom}},\ \Lambda_2^{\mathrm{dom}},\ \ldots,\ 
				\Lambda_{n-1}^{\mathrm{dom}},\ |\Lambda_n^{\mathrm{dom}}|
				\]
				taken with a negative sign and arranged in strictly decreasing order.
			\end{itemize} 
		\end{enumerate}
	\end{thm}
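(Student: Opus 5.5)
The plan is to read off unitarity from Propositions \ref{so*-case-1}(1) and \ref{so*-case-2}(1), using the fact that every $\frk$-regular $\La$ is either in the $q$-case or in the $p$-case. Candidates are all sign-variants of the absolute values of $\La^{\dom}$, as in Remark \ref{candidates}, and conjugacy is decided at the end by counting sign changes. Since $0$ is not a coordinate, these propositions say the following.
\begin{itemize}
\item A $p$-case $\La$ is unitary exactly when all its coordinates are negative.
\item A $q$-case $\La$ is unitary exactly when $\La_2<0$ and $\La_1\le 1-\La_q$.
\end{itemize}
So in every unitary $\La$, all coordinates except possibly $\La_1$ are negative. A repeated absolute value $m$ of $\La^{\dom}$ must occur in a $\frk$-regular $\La$ once as $m$ and once as $-m$. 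Hence there is at most one repeated value, and if there is one it must be $\La_1=m$. This proves (1). When there is exactly one repeated value $m$, the candidate is forced to be
\[
\La=(m;\ \text{all absolute values other than one copy of $m$, negated, in strictly decreasing order}).
\]
Such a $\La$ cannot be in the $p$-case, since $m>0$.

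Next I identify the $q$-string. The coordinates $\La_2,\La_3,\dots$ are the negatives of the remaining absolute values taken in increasing order. The $q$-string $\La_2,\dots,\La_q$ must be maximal, because of the gap condition $\La_q>\La_{q+1}+1$. Therefore it is the negative of the terminal string $S'$ of the remaining absolute values $(\La^{\dom}_1,\dots,\La^{\dom}_{n-1},|\La^{\dom}_n|)$, and $-\La_q=\max S'$. The condition $\La_1>\La_2+1$ holds automatically, since $\La_1\ge1$ and $\La_2\le-1$. So unitarity reduces to $m\le 1+\max S'$.

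In the singular case, $S'$ is the terminal string of the list with its repetition included; the condition says that $m$ lies in that string, which is condition (i). In the regular case, $m$ is removed from the list. I must check that $m\le 1+\max S'$ holds exactly when $m\in S$, where $S$ is the terminal string of the full list. If $m\in S$, then removing $m$ either leaves $S'\supseteq S\setminus\{m\}$, whose maximum is at least $m-1$, or $m$ is the smallest element and then $\max S'\ge m+1$. If $m\notin S$, then $S'=S$ and $m\ge\max S+2$. This bookkeeping of the terminal string after deleting $m$ is the step I expect to be the fiddliest, and I would verify it carefully with the boundary cases $m=\min$ and $m=\max S$.

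Finally, parity. The candidate $(m;\ \text{negatives})$ negates $n-1$ of the absolute values, while the all-negative candidate negates all $n$ of them. Compared with $\La^{\dom}$, the number of sign changes is reduced by one when $\La^{\dom}_n<0$. In the singular case there are $n-1$ changes if $\La^{\dom}_n>0$ and $n-2$ if $\La^{\dom}_n<0$, so evenness is equivalent to (ii). In the regular case the all-negative $\La=(-|\La^{\dom}_n|,\dots,-\La^{\dom}_1)$ has $n$ or $n-1$ changes, which gives (3)(i). The first-coordinate-positive candidates have the opposite parity, so they are conjugate exactly in case (3)(ii). Combined with the characterization of $m$ above, this gives the list in (3)(ii), and these two families are mutually exclusive.
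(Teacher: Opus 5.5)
Your proposal is correct and follows essentially the same route as the paper. Propositions \ref{so*-case-1}(1) and \ref{so*-case-2}(1) force every coordinate except $\La_1$ to be negative, the $q$-string is read off from the tail of the remaining absolute values, and conjugacy is decided by sign-change parity; your reduction to $m\le 1+\max S'$ is a tidier packaging of the paper's comparison of $\La_q$ with $-a+1$ and of $a\le b+r+1$. One small slip in the regular case: if $m$ lies strictly inside $S$, deleting it splits $S$, so $S'\supseteq S\setminus\{m\}$ is false (e.g.\ $S=\{3,4,5\}$ and $m=4$ give $S'=\{3\}$), but the correct statement $S'=\{s\in S : s<m\}$, hence $\max S'=m-1$, is all you need.
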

	
	\begin{proof}
		(1) If among the coordinates 
		$\Lambda_1^{\mathrm{dom}}, \Lambda_2^{\mathrm{dom}}, \ldots, 
		\Lambda_{n-1}^{\mathrm{dom}}, |\Lambda_n^{\mathrm{dom}}|$ 
		at least two are repeated, then all of these repeated coordinates must appear in any 
		$\mathfrak{k}$--regular conjugate $\La$ of $\Lambda^{\dom}$ with both the plus and the minus sign.  
		Since $0$ is not a coordinate of $\Lambda^{\mathrm{dom}}$, every unitary parameter 
		may have at most one positive coordinate, because by Propositions \ref{so*-case-1} and \ref{so*-case-2}, the coordinates 
		$\Lambda_2, \ldots, \Lambda_n$ must be negative.  
		Therefore, $\Lambda^{\mathrm{dom}}$ has no unitary conjugates in this case.
		\smallskip
		
		(2) If among the coordinates 
		$\Lambda_1^{\mathrm{dom}}, \Lambda_2^{\mathrm{dom}}, \ldots, 
		\Lambda_{n-1}^{\mathrm{dom}}, |\Lambda_n^{\mathrm{dom}}|$ 
		exactly one is repeated, then there are no unitary conjugates in the $p$--case, 
		since by Proposition \ref{so*-case-2}(1) all unitary parameters in the $p$--case have all coordinates negative, 
		while the repeated coordinate $a$ must appear in any 
		$\mathfrak{k}$--regular conjugate of $\Lambda^{\dom}$ with both the plus and the minus sign.
		
		If $\Lambda$ is a unitary conjugate of $\La^{\dom}$ belonging to the $q$--case, then using Proposition \ref{so*-case-1}(1) we see that $\Lambda_1$ 
		must be equal to $a$, and since $\Lambda_1 + \Lambda_q \le 1$, we have 
		$\Lambda_q \le -a + 1$. 
		If $\Lambda_q = -a + 1$, then $-a$ does not appear in the $q$--string. 
		Since $-a$ must occur as a coordinate of $\Lambda$, it must lie among 
		$\Lambda_{q+1}, \ldots, \Lambda_n$; in fact, $\Lambda_{q+1} = -a$, and therefore 
		there is no gap between the $q$--string and the tail 
		$(\Lambda_{q+1}, \ldots, \Lambda_n)$, which is a contradiction.
		
		Therefore, $\Lambda_q \neq -a + 1$, and hence $\Lambda_q = -a - l$ for some 
		nonnegative integer $l$, and the $q$--string is of the form
		\[
		(-a + r, \ldots, -a, \ldots, -a - l),
		\]
		for some nonnegative integers $r$ and $l$, where $-a + r < 0$, since $\La_2$ is negative.
		Therefore, $\La_{q+1} \leq -a-l-2$. We conclude that the coordinates $a+l, \ldots, a-r$ occur, while the coordinates $a - r - 1$ and $a+l+1$ do not occur among the coordinates $\Lambda_1^{\mathrm{dom}}, \Lambda_2^{\mathrm{dom}}, \ldots, 
		\Lambda_{n-1}^{\mathrm{dom}}, |\Lambda_n^{\mathrm{dom}}|$. Furthermore, $a-r$ is the lowest among the coordinates $\Lambda_1^{\mathrm{dom}}, \Lambda_2^{\mathrm{dom}}, \ldots, 
		\Lambda_{n-1}^{\mathrm{dom}}, |\Lambda_n^{\mathrm{dom}}|$, i.e. $a-r = |\La_n^{\dom}|$.
		Therefore, to have any unitary conjugate, $\La^{\dom}$ must end with the coordinates $(a+l, \ldots, a+1, a, a, a-1, \ldots, a-r+1, a-r)$  or with the coordinates $(a+l, \ldots, a+1, a, a, a-1, \ldots, a-r+1, -a+r)$.
		
		Therefore, $\La$ must be of the form
		\[
		\La = (a; -a + r, \ldots, -a + 1, -a, -a-1, \ldots, -a-l;R),
		\]
		where $r$ and $l$ are the maximal integers such that  
		\[
		a + l,\ \ldots,\ a,\ a,\ a-1,\ \ldots,\ a - r
		\]
		all appear among 
		$\Lambda_1^{\mathrm{dom}}, \Lambda_2^{\mathrm{dom}}, \ldots, 
		\Lambda_{n-1}^{\mathrm{dom}}, |\Lambda_n^{\mathrm{dom}}|$,  $a-r = |\La_n^{\dom}|$
		and $R$ denotes the sequence consisting of the remaining coordinates of 
		$\Lambda^{\mathrm{dom}}$, written with a negative sign and arranged in 
		strictly decreasing order. Hence, 
		$R = (-\La^{\dom}_{n-(l+r+2)}, \ldots, -\La^{\dom}_{1})$
		and
		\begin{align}\label{canda}
			\La = (&\La_{n-r -1}^{\dom}; -|\La_n^{\dom}|, \ldots, -\La_{n-r+1}^{\dom}, -\La_{n-r}^{\dom}, -\La_{n-r - 2}^{\dom}, \ldots, -\La^{\dom}_{n-(l+r+1)};\\
			&\qquad \qquad\qquad\qquad\qquad -\La^{\dom}_{n-(l+r+2)}, \ldots, -\La^{\dom}_{1}), \notag
		\end{align}
		where $\La_{n-r - 1}^{\dom} = \La_{n-r}^{\dom} = a$, 
		\[
		\La^{\dom}_{n-(l+r+1)}, \ldots, \La_{n-r - 2}^{\dom}, \La_{n-r}^{\dom}, \La_{n-r+1}^{\dom},  \ldots, |\La_n^{\dom}|
		\]
		is a string and $\La^{\dom}_{n-(l+r+2)} - \La^{\dom}_{n-(l+r+1)} \geq 2.$

		Since the Weyl group $W$ consists of  permutations together with an even 
		number of sign changes, the parameter \eqref{canda} is a unitary conjugate of $\La^{\dom}$
		if and only if the number of sign changes from $\La^{\dom}$ to $\La$ is even. If $\La_{n}^{\dom} > 0$, then the number of sign changes is equal to $n-1$ and the parameter \eqref{canda} is  a unitary conjugate of $\La^{\dom}$ if and only if $n$ is odd.
		
		If $\La_{n}^{\dom} < 0$, then the number of sign changes is equal to $n-2$ and the  (unitary) parameter \eqref{canda} is a conjugate of $\La^{\dom}$ if and only if $n$ is even. 
		\smallskip
		
		(3) If none of the coordinates 
		$\Lambda_1^{\mathrm{dom}}, \Lambda_2^{\mathrm{dom}}, \ldots, 
		\Lambda_{n-1}^{\mathrm{dom}}, |\Lambda_n^{\mathrm{dom}}|$ is repeated, then one candidate for a unitary parameter is
		\begin{equation}\label{cand}
			\Lambda = (-|\Lambda_n^{\mathrm{dom}}|,\ -\Lambda_{n-1}^{\mathrm{dom}},\ \ldots,\ -\Lambda_1^{\mathrm{dom}}).
		\end{equation}
		The parameter \eqref{cand} is a unitary conjugate of $\La^{\dom}$
		if and only if the number of sign changes from $\La^{\dom}$ to $\La$ is even. 
		
		If $\Lambda_n^{\mathrm{dom}} > 0$, then the number of sign changes is $n$, and 
		\eqref{cand} is a unitary conjugate of $\La^{\dom}$  if and only if $n$ is even.
		
		If $\Lambda_n^{\mathrm{dom}} < 0$, then the number of sign changes is $n-1$, 
		and \eqref{cand} is a unitary conjugate of $\La^{\dom}$ if and only if $n$ is odd.
		The parameter \eqref{cand} is in the $p$--case if $\La_{n-1}^{\dom} = |\La_{n}^{\dom}| + 1$. If $\La_{n-1}^{\dom} \neq |\La_{n}^{\dom}| + 1$, then \eqref{cand} is in the $q$--case.
		
		Since by Proposition \ref{so*-case-2}(1) there are no further unitary candidates in the 
		$p$--case, we now derive a criterion for determining which of the remaining conjugates in the $q$--case are unitary. All other unitary candidates are of the form
		\[
		\La = (a; -b, -b-1, \ldots, -b-r; \La_{r + 3}, \ldots, \La_{n}),
		\]
		where $a, b, b+1, \ldots, b+r$ are among coordinates $\Lambda_1^{\mathrm{dom}}, \Lambda_2^{\mathrm{dom}}, \ldots, 
		\Lambda_{n-1}^{\mathrm{dom}}, |\Lambda_n^{\mathrm{dom}}|$, such that:
		
		\begin{itemize}
			\item $a>0$ (since we already treated the case with all coordinates of $\La$ negative);
			\item $\La_{r + 3} \leq -b-r-2$ (since we are assuming that $-b-r$ is the end of the $q$-string, i.e., $q=r+2$);
			\item $a\geq -b+2$, $a-b-r\leq 1$ (since by Proposition \ref{so*-case-2}(1), $\La_1\in [\La_2+2,1-\La_q])$.
		\end{itemize}
		
		It follows that $a \leq b+r+1$. Since none of the coordinates 
		$\Lambda_1^{\mathrm{dom}}, \Lambda_2^{\mathrm{dom}}, \ldots, 
		\Lambda_{n-1}^{\mathrm{dom}}, |\Lambda_n^{\mathrm{dom}}|$ is repeated, it follows that $a \notin \{ b, b+1, \ldots, b+r\}$. Therefore, either $0< a <b$ or $a = b + r + 1$. If $0<a<b$, then $a$ has the smallest absolute value among all of the coordinates of $\La^{\dom}$. Therefore, $a = |\La_n^{\dom}|$ and 
		\begin{equation}\label{cand2}
			\La = (|\La_n^{\dom}|; - \La_{n-1}^{\dom}, \ldots, - \La_{1}^{\dom}).
		\end{equation}
		
		If $\Lambda_n^{\mathrm{dom}} > 0$, then the number of sign changes from $\La^{\dom}$ to $\La$ is $n-1$, and 
		\eqref{cand2} is a unitary conjugate of $\La^{\dom}$ if and only if $n$ is odd.
		
		If $\Lambda_n^{\mathrm{dom}} < 0$, then the number of sign changes is $n$, 
		and \eqref{cand2} is a unitary conjugate of $\La^{\dom}$ if and only if $n$ is even.
		
		If $a = b + r + 1$, then $\Lambda$ is of the form
		\[
		\Lambda = 
		(b+r+1;\ -b,-b-1,\ldots,-b-r;\ 
		\Lambda_{r+3}, \ldots, \Lambda_n),
		\]
		where $\Lambda_{r+3} \le -b - r - 2$.
		Hence, $b$ has the smallest absolute value among all coordinates of $\Lambda$, 
		and therefore $b = |\Lambda_n^{\mathrm{dom}}|$.
		
		Let $k$ be the largest integer such that all coordinates 
		$b, b+1, \ldots, b+k$ occur among $\Lambda^{\dom}_{1}, \ldots, , \Lambda^{\dom}_{n-1}, |\Lambda^{\dom}_{n}|$.  
		Then
		\[
		b = |\Lambda_n^{\mathrm{dom}}|,\quad 
		b+1 = \Lambda_{n-1}^{\mathrm{dom}},\ \ldots,\ 
		b+k = \Lambda_{n-k}^{\mathrm{dom}},\quad
		b+k+1 < \Lambda_{n-k-1}^{\mathrm{dom}}.
		\]
		
		All unitary conjugates are therefore of the form
		\begin{align}\label{candr}
			\Lambda = (& b+r+1;\ -b,-b-1,\ldots,-b-r; \\
			& \qquad\qquad -b-r-2,\ldots,-b-k,
			-\Lambda_{n-k-1}^{\mathrm{dom}},\ldots,
			-\Lambda_1^{\mathrm{dom}}) \notag \\
			= & (\Lambda_{n - r- 1}^{\dom};\ -|\Lambda_{n}^{\dom}|, -\Lambda_{n - 1}^{\dom}, \ldots, - \Lambda_{n - r}^{\dom}, -\Lambda_{n - r - 2}^{\dom}, \ldots, -\Lambda_{1}^{\dom}),    \notag
		\end{align}
		where $0 \le r \le k-1$.
		
		If $\Lambda_n^{\mathrm{dom}} > 0$, then the number of sign changes from $\La^{\dom}$ to $\La$ is $n-1$, 
		and \eqref{candr} is a unitary conjugate of $\La^{\dom}$ if and only if $n$ is odd.  
		If $\Lambda_n^{\mathrm{dom}} < 0$, then the number of sign changes is $n-2$, 
		and \eqref{candr} is a unitary conjugate of $\La^{\dom}$ if and only if $n$ is even.
	\end{proof}
	
	\begin{ex}
		{\rm 
			Let $\La^{\dom}= ( 9, 4, 3, 3, 2, 1, 1, 0)$. Then each $\frk$--dominant regular conjugate of $\La^{\dom}$ must contain coordinates $3, 1, 0, -1, -3$, while the nonrepeated coordinates $9, 4$ and $2$ can appear with the positive or negative sign ($2^3 = 8$ possibilities). Therefore, $\frk$--dominant regular conjugates of $\La^{\dom}$ are 
			\begin{align*}
				& (9 \, | \,  4, 3, 2, 1, 0, -1 \, | \,  -3), (9 \, | \, 4, 3 \, | \, 1, 0, -1, -2, -3), \\
				& (9 \, | \, 3, 2, 1, 0, -1 \, | \, -3, -4), (9 \, | \, 3 \, | \, 1, 0, -1, -2, -3, -4), \\
				& (4, 3, 2, 1, 0, -1 \, | \, -3, -9), (4, 3 \, | \, 1, 0, -1, -2, -3,  -9), \\
				& (3, 2, 1, 0, -1 \, | \, -3, -4, -9), (3 \, | \, 1, 0, -1, -2, -3, -4 \, | \, -9) 
			\end{align*}
			We now apply Theorem \ref{so*-zeroin}. Since in this case $x = 1, u = 3$ and $v = 2$, unitary conjugates are $(3 \, | \, 1, 0, -1, -2, -3, -4 \, | \, -9)$, $(4, 3, 2, 1, 0, -1 \, | \,-3, -9)$ and $(3, 2, 1, 0, -1 \, | \, -3, -4, -9)$. All other conjugates are nonunitary.
		}
	\end{ex}
	
	\begin{ex}{\label{rho_so*}}
		{\rm
			Let $\La^{\dom}= (3, 2, 1, 0)$. Then $\frk$--dominant regular conjugates of $\La^{\dom}$ are 
			\begin{align*}
				& (3, 2, 1, 0), (3, 2 \, | \, 0, -1), (3 \, | \, 1, 0 \,| \,-2) \\  
				& (2, 1, 0 \,|\, -3), (3 \, | \, 0, -1, -2), (2 \,| \, 0, -1 \, | \, -3) \\
				& (1, 0 \,| \, -2, -3), (0, -1, -2, -3)
			\end{align*}
			We now apply Theorem \ref{so*-zeroin}.  Since in this case $x = 0, u = 3$, unitary conjugates are
			$(3 \, | \, 0, -1, -2), (2 \,| \, 0, -1 \, | \, -3)$, $(3, 2, 1, 0),  (2, 1, 0 \,|\, -3), (1, 0 \,| \, -2, -3)$ and $(0, -1, -2, -3)$.  All other conjugates are nonunitary.
		}
	\end{ex}
	
	Now we will generalize this example to an arbitrary $\rho$ and describe unitarity in terms of the Young diagrams.
	\begin{ex} 
		{\rm 
			Let $\Lambda^{\dom} = \rho=(n-1,n-2,\dots,0)$. Then all $\frk$-dominant $W$-conjugates of $\rho$ are of the form
			\[
			\La=w\rho=(i_1,\dots,i_k,0,-j_m,\dots,-j_1)
			\]
			for various choices of integers $k,m\in[0,n-1] $ and $i_1>\dots>i_k$, $j_1>\dots>j_m$, such that $k+m=n-1$ and
			\[
			\{i_1,\dots,i_k\}\cup\{j_1,\dots,j_m\}=\{1,\dots,n-1\}.
			\]
			Following \cite{H2}, we express these parameters in terms of Young diagrams
			with at most $n$ rows, with each row of length at most $n-1$, and with the additional condition that the diagrams can be built from hooks. Here by a hook (of size $k$) we mean the Young diagram with row lengths
			\[
			k,\underbrace{1,\dots,1}_k\qquad\text{for some}\quad k\in[1,n-1].
			\]
			By inverted hook we mean the diagram that is obtained from a hook by mirroring along ``antidiagonal''.
			A Young diagram $Y$ is built from hooks if it is either a hook, or the first row and column of $Y$ form a hook, and after removing that hook the resulting Young diagram can still be built from hooks.
			
			If the Young diagram $Y$ has row lengths $y_1\geq\dots \geq y_n$,  some of them possibly zero, we will write $Y=(y_1,\dots,y_n)$. The corresponding parameter $\La$ and highest weight $\la$ are
			\[
			\La=\rho-(y_n,\dots,y_1);\qquad \la=\La-\rho= -(y_n,\dots,y_1).
			\]
			\medskip
			
			{\rm Let $n=4$. The Hasse diagram of $\rho$ is
				
				\medskip
				
				{\footnotesize
					\[
					\begin{CD}
						(3,2,1,0)@<<<(3,2,0,-1)@<<<(3,1,0,-2)@<<<(3,0,-1,-2) \\
						@. @. @AAA @AAA \\
						@. @. (2,1,0,-3)@<<<(2,0,-1,-3) \\
						@.@.@. @AAA \\
						@.@.@.(1,0,-2,-3) \\
						@.@.@.@AAA\\
						@.@.@.(0,-1,-2,-3)
					\end{CD}
					\]
				}
				\medskip
				
				\noindent with arrows pointing to larger elements in Bruhat order. The corresponding Young diagrams are
				
				\medskip
				
				{\tiny
					\[
					\begin{CD}
						\emptyset @<<<\ydiagram{1,1}@<<<\ydiagram{2,1,1}@<<<\ydiagram{2,2,2} \\
						@. @. @AAA @AAA \\
						@.@.\ydiagram{3,1,1,1}@<<<\ydiagram{3,2,2,1} \\
						@.@.@. @AAA \\
						@.@.@.\ydiagram{3,3,2,2}\\
						@.@.@.@AAA\\
						@.@.@. \ydiagram{3,3,3,3}
					\end{CD}
					\]
				}
				\medskip
				
			}
			
			To determine the unitary points in the Hasse diagram of $\rho$, let us first assume that
			\[
			\La=(\La_1, \underbrace{ \La_2, \La_2 - 1, \dots,\La_2 - (q-2)}_{q - 1},\La_{q+1},\dots,\La_n)
			\]
			for some $q\in[2,n]$, with $\La_1 >\La_2 + 1$ and $\La_q >\La_{q+1} + 1$ if $q<n$.
			i.e. \[y_n<y_{n-1}=\dots=y_{n-q+1}.\]
			It follows from Theorem \ref{so*-zeroin}, where $x = 0$, $u = n-1$ that
			$\La$ is unitary if and only if $\La$ is of the form
			\begin{align}
				\label{q edge so*}
				\La & =(q-1\, | \, 0,\dots,-q+2\, | \, -q,\dots,-n+1),
			\end{align}
			where $3 \leq q \leq n$,
			with the dots standing for integers decreasing by one.
			We call the set of these points the $q$-edge of the Hasse diagram of $\rho$.
			
			Geometrically, each of the corresponding Young diagrams $Y$ is the full $n\times(n-1)$ box with an inverted hook of size $q-1$ taken out from the lower right corner. In Example \eqref{rho_so*}, there are two such diagrams, on top of the last column of the Hasse diagram. The other two diagrams in the last column of the Hasse diagram also follow the same pattern, with the inverted hook removed being of size $1$, respectively $0$; we do not include them here because the corresponding $\la$  does not belong to the $q$-case, but to the $p$-case that we will study next.
			\medskip
			
			Let us now assume that 
			\[
			\La=(\underbrace{\La_1, \La_1 - 1, \dots,\La_1 - (p-1)}_p,\La_{p+1},\dots,\La_n),
			\]
			for some $p\in[2,n]$, with $\La_1 - p >\La_{p+1}$ if $p<n$, i.e.
			\[
			y_n=\dots=y_{n-p+1}.
			\]
			Then $\La$ is unitary if and only if $\La$ is of the form
			\begin{align}
				\label{p edge so*}
				\La&=(p-1,\dots,1,0 \, | \, -p,-p-1,\dots,-n+1)
			\end{align}
			$p \in [2, n]$ or $\La=(0,-1,\dots,-n+1)$.
			We call the set of these points the $p$-edge of the Hasse diagram of $\rho$. Note also that for $p=n$, $Y$ is the empty diagram,  $\La=\rho$, and the corresponding module is the trivial module $\bbC$.
			
			In the remaining case, $y_n=n-1$, so $Y$ has all $n$ rows of length $n-1$, i.e., it is the $n\times(n-1)$ box. The corresponding parameter is 
			\eq
			\label{rhotil so*}
			\La=(0,-1,\dots,-n+1).
			\eeq
			This is of course $\tilde\rho$, the smallest element in the Hasse diagram. It is in the continuous part of its line, since \cite[Theorem 5.9 (3)]{PPST3} is clearly satisfied.
		}
	\end{ex}
	
	We have proved
	\begin{prop}
		\label{hasse rho so*}
		The unitary points of the Hasse diagram of $\rho$ belonging to the $q$-case are the points of the $q$-edge, given by \eqref{q edge so*} (these points are the last points of unitarity on their lines).
		The unitary points of the Hasse diagram of $\rho$ belonging to the $p$-case are:
		\begin{enumerate}
			\item $\tilde\rho$, given by \eqref{rhotil so*} (this point belongs to the continuous part of its line);
			\item the points of the $p$-edge, given by \eqref{p edge so*}
			(these points are the last points of unitarity on their lines). \qed
		\end{enumerate} 
	\end{prop}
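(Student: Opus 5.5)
The plan is to specialize Theorem \ref{so*-zeroin} to $\La^{\dom}=\rho=(n-1,n-2,\dots,1,0)$ and then use the dictionary between parameters and Young diagrams to read off where each unitary point sits in the Hasse diagram. Since $\rho$ has no repeated coordinates, the maximal $x$ with $0,1,1,\dots,x,x$ among the coordinates is $x=0$. Since $1,2,\dots,n-1$ all occur, the maximal $u$ is $u=n-1$. So we are in part (4) of Theorem \ref{so*-zeroin}, and case (b) does not apply because $\La^{\dom}_{n-1}=1$.

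First, the $q$-case. Part (4)(a) gives, for each $a\in[1,n-2]$, the parameter
\[
\La=(a+1\,|\,0,-1,\dots,-a\,|\,-a-2,\dots,-n+1).
\]
Putting $q=a+2\in[3,n]$, this is exactly \eqref{q edge so*}, and there are no other unitary $q$-case points. These are the last points of unitarity on their lines. To see this, compute $\la=\La-\rho$ and check that $\la_1+\la_2=-2n+q+2$, so equality holds in the unitarity bound from \cite[Theorem 5.6]{PPST3}.

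Second, the $p$-case. Part (4)(c) gives, for $a\in[1,n-1]$, the parameter $(a,\dots,1,0\,|\,-a-1,\dots,-n+1)$. With $p=a+1\in[2,n]$ this is \eqref{p edge so*}. Part (4)(d) gives $(0,-1,\dots,-n+1)=\tilde\rho$, which is \eqref{rhotil so*}. No sign-parity check is needed: everything here is a $W$-conjugate because $0$ is a coordinate, so the sign of zero can absorb parity (see Remark \ref{candidates}).

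The extra claims about position on the lines come from \cite[Theorems 5.9, 5.12]{PPST3}.
For the $p$-edge, $\la_1=\La_1-(n-1)=p-1-(n-1)=-n+p$. This is the largest integral value allowed by the condition $\la_1\le -n+p$, so it is the last point of unitarity on its line.
For $\tilde\rho$, we have $p=1$ in the sense that $\La_1-1>\La_2$ fails. The point is read as $\la=(-(n-1),\dots)$, and the inequality in \cite[Theorem 5.9(3)]{PPST3} holds strictly, so $\tilde\rho$ lies in the continuous part of its line.

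The main obstacle is this last bookkeeping step: deciding which of $p$ and $q$ is attached to each point, especially for $\tilde\rho$ and the $q=2$ boundary. The $q=2$ point coincides with a $p$-edge configuration, and that is why \eqref{q edge so*} starts at $q=3$. I would settle this by directly computing $\la$ for each point and matching it against the hypotheses of the cited theorems.
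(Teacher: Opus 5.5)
Your route is the paper's. You specialize Theorem \ref{so*-zeroin} to $\La^{\dom}=\rho$, where $x=0$ and $u=n-1$. Part (4)(a) with $q=a+2$ gives exactly the $q$-edge \eqref{q edge so*}, part (4)(c) with $p=a+1$ gives the $p$-edge \eqref{p edge so*}, and part (4)(d) gives $\tilde\rho$. Part (4)(b) is excluded because $\La^{\dom}_{n-1}=1$, and no parity check is needed because $0$ is a coordinate. Your verifications of the ``last point of unitarity'' claims, $\la_1+\la_2=-2n+q+2$ on the $q$-edge and $\la_1=-n+p$ on the $p$-edge, are correct. They are also more explicit than the paper, which only asserts these claims.

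The one step that is wrong as written is the bookkeeping for $\tilde\rho$: it is not a ``$p=1$'' point.
\begin{itemize}
\item $\La_1-1=\La_2$ means $\la_1=\la_2$. In fact $\la=\tilde\rho-\rho=-(n-1)(1,\dots,1)$, so all of $\tilde\rho=(0,-1,\dots,-n+1)$ is a single $p$-string and $p=n$.
\item A genuine $p=1$, i.e.\ $\la_1>\la_2$, would put $\tilde\rho$ in the $q$-case. That contradicts your own derivation of it from the $p$-case part (4)(d).
\item So $\tilde\rho$ lies on the same line $\bbC(1,\dots,1)$ as $\rho$, the trivial module, which is the $p=n$ point of the $p$-edge.
\item The right-hand side of the continuity inequality depends on $p$, so the check must use $p=n$: $\la_1=-n+1<-n+\left[\frac{n+1}{2}\right]+1$, which holds because $\left[\frac{n+1}{2}\right]\ge 1$.
\end{itemize}
Your conclusion that $\tilde\rho$ lies in the continuous part is right, and with this correction the argument is complete.
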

	
	\subsection{Half--integer coordinates}
	
	Finally, assume that all coordinates of $\Lambda^{\mathrm{dom}}$ are half--integers.
	In the $q$--case, unitarity is equivalent to the condition
	\[
	\Lambda_1 + \Lambda_2 \leq q - 1.
	\]
	We distinguish two cases.
	
	\medskip
	\noindent
	\textbf{Case 3.1.}
	$\La^{\dom}$ does not contain two coordinates whose absolute value is equal to $\frac{1}{2}$. In this case $(\La_2, \ldots, \La_q)$ is a string that does not contain both $\frac{1}{2}$ and  $-\frac{1}{2}$. Therefore, either all coordinates $\La_2, \ldots, \La_q$ are positive, or all are negative. 
	
	If all coordinates $\La_2, \ldots, \La_q$ are positive, then $\La_q \geq \frac{1}{2}$, $\La_2 = \La_q + (q-2)$ and 
	\[
	\La_1 + \La_2 \geq \La_2 + 2 + \La_2 \geq 2(q-2 + \frac{1}{2} + 1) = 2q-1 > q-1.
	\]
	Therefore, $\La$ is not unitary. 
	
	If all coordinates $\La_2, \ldots, \La_q$ are negative, then $\La$ is unitary if and only if $\La_1 + \La_2 \leq q-1$, or equivalently if and only if $\La_1 \in \{ \La_2 + 2, \ldots, -\La_q + 1\}$.
	
	\medskip
	\noindent
	\textbf{Case 3.2.}
	$\La^{\dom}$ contains two coordinates whose absolute value is equal to $\frac{1}{2}$. Let $x$ be either $\half$, or the maximal coordinate such that $\La^{\dom}$ contains coordinates $x, x, \ldots, \frac{1}{2}, \frac{1}{2}$ or $x, x, \ldots, \frac{1}{2}, -\frac{1}{2}$, where $x > \frac{1}{2}$. If $x$ is in the tail $(\La_{q+1}, \ldots, \La_n)$, then 
	\[
	\La_2 = \La_q + (q-2) \geq \La_{q+1} + q \geq x + q \geq q + \frac{1}{2}
	\]
	and $\La_1 + \La_2 \geq 2 \La_2 + 2 \geq 2q + 3 > q-1$. Therefore, $\La$ is not unitary.
	
	If $x$ is in the $q$--string, then the $q$--string has to contain the coordinates $x, x-1, \ldots, \frac{1}{2}, - \frac{1}{2}, \ldots, -x$. Due to the maximality of the number $x$, the string
	\[
	x, x-1, \ldots, -x
	\]
	is either the rightmost part of $(\Lambda_2,\ldots,\Lambda_q)$ or the leftmost part of
	$(\Lambda_2,\ldots,\Lambda_q)$.
	
	If the string $x, x-1, \ldots, -x$ is the rightmost part of $(\Lambda_2,\ldots,\Lambda_q)$,
	then $\Lambda_q = -x$ and $2x+1 \le q-1$.
	Therefore, we have
	\[
	\Lambda_1 + \Lambda_2 \ge \Lambda_2 + 2 + \Lambda_2
	= 2(\Lambda_q + (q-2)) + 2
	= 2q - 2 - 2x\geq q > q - 1,
	\]
	so $\Lambda$ is nonunitary.
	
	If the string $x, x-1, \ldots, -x$ is the leftmost part of, but not equal to,
	$(\Lambda_2,\ldots,\Lambda_q)$, then $\Lambda_2 = x$ and $2x+1 \le q-2$.
	In this case, the unitarity condition $\Lambda_1 + \Lambda_2 \le q-1$ is equivalent to
	\[
	\Lambda_1 \in \{\Lambda_2 + 2, \ldots, 1 - \Lambda_q\}.
	\]
	We have proved
	
	\begin{prop}
		\label{so*-case-1-half-integral}    
		Assume that the coordinates of $\La^{\dom}$ are half-integers.
		Let $\La$ be a $\frk$--regular conjugate of $\La^{\dom}$ of the form 
		\[
		\La=(\La_1, \underbrace{ \La_2, \La_2 - 1, \dots,\La_2 - (q-2)}_{q - 1},\La_{q+1},\dots,\La_n)
		\]
		for some $q\in[2,n]$, with $\La_1 >\La_2 + 1$ and $\La_q >\La_{q+1} + 1$ if $q<n$.
		\begin{itemize}
			\item [(1)] If $\La^{\dom}$ does not contain two coordinates whose absolute value is equal to $\frac{1}{2}$, then $\La$ is unitary if and only if $\La_2 < 0$ and $\La_1 \in \{ \La_2 + 2, \ldots, 1 - \La_q\}$.
			\item [(2)] If $\La^{\dom}$ contains two coordinates whose absolute value is equal to $\frac{1}{2}$ and if $x$ is either $\half$, or the maximal number such that the coordinates of $\La^{\dom}$ include $x, x, \ldots, \frac{1}{2}, \frac{1}{2}$ or $x, x, \ldots, \frac{1}{2}, -\frac{1}{2}$, with $x>\half$, then $\La$ is unitary if and only if the string $x, x-1, \ldots, -x$ is the leftmost part of, and not all of, $\La_2, \ldots, \La_q$ and $\La_1 \in \{ \La_2 + 2, \ldots, 1 - \La_q\}$. \qed
		\end{itemize}
	\end{prop}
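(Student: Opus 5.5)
The statement is Proposition \ref{so*-case-1-half-integral}, and it is essentially a summary of the case analysis in Cases 3.1 and 3.2 just above. So the plan is to organize that analysis as a proof. The starting point is the $q$-case unitarity criterion from \cite[Theorem 5.6]{PPST3}. Rewritten in terms of $\La=\la+\rho$, it says that $\La$ of the displayed shape is unitary if and only if $\La_1+\La_2\le q-1$.

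Two facts will be used throughout. First, since $\La_2,\dots,\La_q$ is a string, we have $\La_2=\La_q+(q-2)$. Second, the gap condition $\La_1\ge\La_2+2$ holds. Together these show that $\La_1+\La_2\le q-1$ is the same as $\La_1\le q-1-\La_2=1-\La_q$. Hence the criterion always reads $\La_1\in\{\La_2+2,\dots,1-\La_q\}$, and what remains is to determine when this set can be nonempty, in terms of the position of the coordinates of absolute value $\half$.

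For (1), suppose $\La^{\dom}$ does not contain two coordinates of absolute value $\half$. A half-integral string that avoids having both $\half$ and $-\half$ cannot change sign, so $\La_2,\dots,\La_q$ are either all positive or all negative.
\begin{itemize}
\item If they are positive, then $\La_q\ge\half$, so $\La_1+\La_2\ge 2\La_2+2\ge 2q-1>q-1$, and $\La$ is not unitary.
\item If they are negative, then $\La_2<0$, and by the reformulation above unitarity is exactly the stated membership condition.
\end{itemize}

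For (2), the pairs $\pm y$ for $\half\le y\le x$ come from the repeated absolute values $x,\dots,\half$ in $\La^{\dom}$. Since $\La$ is $\frk$-regular, each such $y$ must appear once with each sign, so the full string $x,x-1,\dots,-x$ occurs among the coordinates of $\La$. It cannot straddle the gaps at $\La_1$ or at $\La_q/\La_{q+1}$. I then rule out the bad placements one at a time.
\begin{itemize}
\item The string cannot contain $\La_1$, because it has length at least $2$ and $\La_1>\La_2+1$.
\item If it lies in the tail, then $\La_2\ge x+q$, and the inequality $\La_1+\La_2\ge 2\La_2+2$ again kills unitarity.
\item So it lies inside $\La_2,\dots,\La_q$. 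By maximality of $x$, the values $\pm(x+1)$ are not both available, so the string is flush with the left end or the right end of $(\La_2,\dots,\La_q)$. Here one should check that maximality really forces this: the string $\La_2,\dots,\La_q$ cannot extend on both sides, since extending on both sides would need both $x+1$ and $-(x+1)$, i.e.\ a repeated coordinate $x+1$.
\item If the string is flush right, then $\La_q=-x$ and $\La_1+\La_2\ge 2q-2-2x\ge q>q-1$, which fails, using $2x+1\le q-1$.
\end{itemize}
This leaves the case where the string is the leftmost part and not all of $\La_2,\dots,\La_q$. (If it is all of it, that is the same as flush right.) In that case the criterion again reduces to $\La_1\in\{\La_2+2,\dots,1-\La_q\}$.

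The main obstacle I expect is this maximality/flushness step in (2), together with the degenerate case $x=\half$ with coordinates $\half,-\half$. There I need to confirm that the same bounds apply, namely that $q-1\ge 2x+1$ still holds. Everything else is the arithmetic already displayed in Cases 3.1 and 3.2.
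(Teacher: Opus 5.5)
Your proposal is correct and follows the paper's own argument (Cases 3.1 and 3.2 preceding the proposition) essentially step for step: you rewrite $\La_1+\La_2\le q-1$ as $\La_1\le 1-\La_q$, split (1) by the sign of the $q$-string, and in (2) rule out the tail and flush-right placements of $x,\dots,-x$ using maximality of $x$. You also exclude the string containing $\La_1$ and justify flushness via non-repetition of $x+1$, two small points the paper leaves implicit, and the bound $2x+1\le q-1$ you worried about is automatic once the string lies inside $\La_2,\dots,\La_q$.
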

	
	If the coordinates of $\La^{\dom}$ are half-integers and if $\La$ is in the $p$--case, then $\La$ is unitary if and only if 
	\[
	\La_1 \leq \left [ \frac{p+1}{2} \right ].
	\]
	Since $\La_1 \in \frac{1}{2} + \mathbb{Z}$, the last condition is equivalent to 
	\[
	\La_1 \leq\frac{p}{2}. 
	\]
	Either all coordinates in the $p$--string are negative, or the $p$--string contains both  $\frac{1}{2}$ and $-\frac{1}{2}$. 
	(We ignore the possibility that all coordinates in the $p$--string are positive, because in that case $\La$ cannot be unitary.)
	
	If all coordinates in the $p$--string are negative, then, obviously, $\La$ is unitary.
	
	Let us assume that $\La^{\dom}$ contains two coordinates whose absolute value is equal to $\frac{1}{2}$ and, as before, let $x$ be either $\half$, or the maximal number such that the coordinates of $\La^{\dom}$ include $x, x, \ldots, \frac{1}{2}, \frac{1}{2}$ or $x, x, \ldots, \frac{1}{2}, -\frac{1}{2}$, such that $x>\half$. Any $\frk$--regular conjugate $\La$ has to contain the string $x, x-1, \ldots, -x$. If $x$ is in the tail $(\La_{p+1}, \ldots, \La_n)$, then $\La_p \geq x + 2$ and $\La_1 \geq x + p + 1 > p + 1$. Therefore, $\La$ is not unitary. 
	
	If the string $x, x-1, \ldots, -x$ is contained in the $p$--string, then, by the maximality of $x$,
	the string $x, x-1, \ldots, -x$ must be either the leftmost or the rightmost part of the
	$p$--string.
	
	If the string $x, x-1, \ldots, -x$ is the rightmost part of the $p$--string, then $\La_p = -x$ and $\La_1 = \La_p + (p-1) = -x + (p-1)$. Since the length of the string $x, x-1, \ldots, -x$ is $2x+1$, we have $2x+1 \leq p$. Unitarity condition is equivalent to $\La_1 \leq \frac{p}{2}$. Therefore, if $\La$ is unitary, we have
	\[
	-x + (p-1) = \La_1 \leq \frac{p}{2},
	\]
	which implies $2x + 2 \geq p$. Hence $p \in \{ 2x + 1, 2x + 2\}$ and the $p$--string is equal to the string
	\eq
	\label{p string 1}
	\left ( x, x-1, \ldots, \frac{1}{2}, -\frac{1}{2}, \ldots, -x \right),
	\eeq
	or to the string
	\eq
	\label{p string 2}
	\left ( x+1, x, x-1, \ldots, \frac{1}{2}, -\frac{1}{2}, \ldots, -x \right ).
	\eeq
	If the $p$-string is \eqref{p string 1}, i.e., $p=2x+1$, then 
	\[
	\La_1=x<x+\half=\frac{p}{2},
	\]
	and $\La$ is unitary. 
	
	If the $p$-string is \eqref{p string 2}, i.e., $p=2x+2$, then 
	\[
	\La_1=x+1=\frac{p}{2},
	\]
	and $\La$ is unitary.
	
	If the string $x, x-1, \ldots, -x$ is the leftmost part of, but not all of, the $p$--string, then $\La_1 = x$ and $2x+1 < p$. Therefore,
	\[
	\La_1 = x <  \frac{p-1}{2}<\frac{p}{2}.
	\]
	Therefore, $\La$ is unitary. 
	
	We have proved 
	
	\begin{prop} 
		\label{so*-case-2-half-integers}
		Assume that the coordinates of $\La^{\dom}$ are half-integers. Let $\Lambda$ be a $\frk$--regular conjugate of $\La^{\dom}$ of the form
		\[
		\La=(\underbrace{\La_1, \La_1 - 1, \dots,\La_1 - (p-1)}_p,\La_{p+1},\dots,\La_n),
		\]
		for some $p\in[2,n]$, with $\La_1 - p >\La_{p+1}$ if $p<n$.
		\begin{itemize}
			\item[(1)] If $\La^{\dom}$ does not contain two coordinates whose absolute value is equal to $\frac{1}{2}$, then  $\La$ is unitary if and only if all coordinates of $\La$ are negative. 
			\item[(2)] If $\La^{\dom}$ contains two coordinates whose absolute value is equal to $\frac{1}{2}$, let $x$ be either $\half$, or the maximal number such that the coordinates of $\La^{\dom}$ include $x, x, \ldots, \frac{1}{2}, \frac{1}{2}$ or $x, x, \ldots, \frac{1}{2}, -\frac{1}{2}$, such that $x>\half$.
			
			Then $\La$ is unitary if and only if the string  $x, x-1, \ldots, -x$  is the leftmost part of the string $\La_1, \ldots, \La_p$, or if
			\[
			(\La_1,\dots,\La_p) = \left ( x+1, x, x-1, \ldots, \frac{1}{2}, -\frac{1}{2}, \ldots, -x \right ). \qquad\qquad\qed
			\]
		\end{itemize}
	\end{prop}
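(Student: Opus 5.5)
The statement to prove is Proposition \ref{so*-case-2-half-integers}. It has already been derived in the discussion preceding it, so the plan is to organize that discussion into a proof. The starting point is the unitarity criterion for the $p$-case with half-integral coordinates. From \cite[Theorems 5.9 and 5.12]{PPST3}, unitarity is equivalent to $\La_1\le[\frac{p+1}{2}]$, and since $\La_1\in\half+\bbZ$ this becomes $\La_1\le \frac p2$. The $p$-string $\La_1,\dots,\La_p$ is a string of half-integers, so exactly one of three things happens: all its entries are positive, all are negative, or it contains both $\half$ and $-\half$. If all entries are positive, then $\La_p\ge\half$, hence $\La_1\ge p-\half>\frac p2$, and $\La$ is not unitary. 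If all entries are negative, then $\La_1<0\le \frac p2$, and $\La$ is unitary.

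For (1), suppose $\La^{\dom}$ does not contain two coordinates of absolute value $\half$. Then the $p$-string cannot contain both $\pm\half$, so it lies entirely on one side of zero. If it is negative, every later coordinate is smaller still, so all coordinates of $\La$ are negative. This gives (1).

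For (2), $\La$ must contain the string $x,x-1,\dots,-x$, because each repeated absolute value must occur with both signs and $\La$ is $\frk$-regular. There are three cases.

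\textbf{Case A: the string lies in the tail $(\La_{p+1},\dots,\La_n)$.} Then $\La_p\ge x+2$, so $\La_1\ge x+p+1>\frac p2$, and $\La$ is not unitary.

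\textbf{Case B: the string lies in the $p$-string.} By the maximality of $x$, the neighbours of $x,\dots,-x$ inside the $p$-string can only extend it at one end. Hence it is either the leftmost or the rightmost part of the $p$-string.
If it is the leftmost part, then $\La_1=x<\frac p2$, since $p\ge 2x+1$, and $\La$ is unitary.
If it is the rightmost part, then $\La_1=p-1-x$, and the condition $\La_1\le\frac p2$ forces $p\le 2x+2$. So $p=2x+1$, which is again the leftmost case, or $p=2x+2$, which gives exactly the exceptional string $(x+1,x,\dots,-x)$. In that case $\La_1=x+1=\frac p2$, and $\La$ is unitary.

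\textbf{Case C: the string straddles the boundary between the $p$-string and the tail.} This is impossible, because the gap condition $\La_1-p>\La_{p+1}$ forbids it.

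The main obstacle is the maximality argument showing that $x,\dots,-x$ sits at one end of the $p$-string. The key point is that any coordinate $x+1$ adjacent to it must be non-repeated, by the choice of $x$. Therefore only one of $\pm(x+1)$ appears, so the string cannot extend on both sides.
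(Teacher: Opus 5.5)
Your proposal is correct and follows essentially the same route as the paper's argument preceding the proposition: you reduce the criterion to $\La_1\le \frac{p}{2}$ and discard all-positive $p$-strings, and in case (2) you place the forced string $x,\dots,-x$ either in the tail (nonunitary) or at one end of the $p$-string, where the rightmost case forces $p\in\{2x+1,2x+2\}$. Your explicit exclusion of the straddling case via the gap condition, and your justification that maximality of $x$ rules out both $\pm(x+1)$ occurring, are details the paper leaves implicit.
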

	
	\begin{thm}\label{so*-halfinteger-onehalfoout}
		Assume that the coordinates of $\La^{\dom}$ are half-integers and that $\La^{\dom}$ does not contain two coordinates whose absolute value is equal to $\frac{1}{2}$.
		\begin{enumerate}
			\item If among the coordinates 
			$\Lambda_1^{\mathrm{dom}}, \Lambda_2^{\mathrm{dom}}, \ldots, 
			\Lambda_{n-1}^{\mathrm{dom}}, |\Lambda_n^{\mathrm{dom}}|$ 
			at least two are repeated, then $\Lambda^{\mathrm{dom}}$ has no unitary conjugates.
			
			\item If among the coordinates 
			$\Lambda_1^{\mathrm{dom}}, \Lambda_2^{\mathrm{dom}}, \ldots, 
			\Lambda_{n-1}^{\mathrm{dom}}, |\Lambda_n^{\mathrm{dom}}|$ 
			exactly one is repeated, and if $\Lambda^{\mathrm{dom}}$ satisfies the following
			two conditions:
			\begin{itemize}
				\item[(i)] $(\Lambda_1^{\mathrm{dom}}, \ldots, \Lambda_{n-1}^{\mathrm{dom}}, 
				|\Lambda_n^{\mathrm{dom}}|)$ ends with a string in which the repeated coordinate appears twice;
				\item[(ii)] either $\Lambda_n^{\mathrm{dom}} > 0$ and $n$ is odd, 
				or $\Lambda_n^{\mathrm{dom}} < 0$ and $n$ is even,
			\end{itemize}
			then $\Lambda^{\mathrm{dom}}$ has exactly one unitary conjugate: the one that 
			starts with the repeated coordinate, followed by the remaining coordinates of
			\[
			\Lambda_1^{\mathrm{dom}},\ \Lambda_2^{\mathrm{dom}},\ \ldots,\ 
			\Lambda_{n-1}^{\mathrm{dom}},\ |\Lambda_n^{\mathrm{dom}}|
			\]
			taken with negative signs and arranged in strictly decreasing order.
			
			If $\Lambda^{\mathrm{dom}}$ does not satisfy the above two conditions, then it has 
			no unitary conjugates.
			
			\item 
			
			If none of the coordinates 
			$\Lambda_1^{\mathrm{dom}}, \Lambda_2^{\mathrm{dom}}, \ldots, 
			\Lambda_{n-1}^{\mathrm{dom}}, |\Lambda_n^{\mathrm{dom}}|$ 
			is repeated, i.e., $\La^{\dom}$ is regular for $\frg$, then:
			\begin{itemize}
				\item[(i)] If $\Lambda_{n}^{\mathrm{dom}} > 0$ and $n$ is even, or 
				$\Lambda_{n}^{\mathrm{dom}} < 0$ and $n$ is odd, 
				then there is exactly one unitary conjugate of $\Lambda^{\mathrm{dom}}$:
				\[
				\Lambda = (-|\Lambda_n^{\mathrm{dom}}|,\ 
				-\Lambda_{n-1}^{\mathrm{dom}},\ \ldots,\ 
				-\Lambda_1^{\mathrm{dom}}).
				\]
				
				\item[(ii)] If $\Lambda_{n}^{\mathrm{dom}} > 0$ and $n$ is odd, or 
				$\Lambda_{n}^{\mathrm{dom}} < 0$ and $n$ is even, 
				let $S$ denote the longest string at the end of
				$(\Lambda_1^{\mathrm{dom}}, \ldots, \Lambda_{n-1}^{\mathrm{dom}},             |\Lambda_n^{\mathrm{dom}}|)$. Then the unitary conjugates of $\La^{\dom}$ are exactly those parameters that begin with one of the coordinates in $S$, 
				followed by the remaining coordinates of
				\[
				\Lambda_1^{\mathrm{dom}},\ \Lambda_2^{\mathrm{dom}},\ \ldots,\ 
				\Lambda_{n-1}^{\mathrm{dom}},\ |\Lambda_n^{\mathrm{dom}}|
				\]
				taken with negative signs and arranged in strictly decreasing order.
			\end{itemize} 
		\end{enumerate}
	\end{thm}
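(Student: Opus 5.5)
The plan is to copy the proof of Theorem \ref{so*-zeroout} almost word for word. The integer-case Propositions \ref{so*-case-1} and \ref{so*-case-2} are replaced by their half-integer analogues, Propositions \ref{so*-case-1-half-integral}(1) and \ref{so*-case-2-half-integers}(1). In the present situation these have literally the same content as the integer statements used there. In the $q$--case, unitarity means $\La_2<0$ and $\La_1\in\{\La_2+2,\dots,1-\La_q\}$. In the $p$--case, unitarity means all coordinates of $\La$ are negative. As in Remark \ref{candidates}, we take as candidates all $\frk$--regular $\La$ whose coordinates have absolute values $\La^{\dom}_1,\dots,\La^{\dom}_{n-1},|\La^{\dom}_n|$. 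Afterwards we keep those with an even number of sign changes relative to $\La^{\dom}$. Since no coordinate is $0$, every sign change is visible, and the parity bookkeeping is as before.

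For (1), a repeated absolute value $a$ must occur in a $\frk$--regular $\La$ as both $a$ and $-a$. This uses the fact that $\pm\half$ do not both occur, so $a>0$ and nothing degenerates. In a unitary $\La$, however, at most $\La_1$ can be positive, by the two propositions. Two repeated values would therefore force two positive coordinates, a contradiction.

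For (2), the $p$--case is excluded, because there all coordinates are negative while $a>0$ must appear. In the $q$--case we need $\La_1=a$ and $\La_q\le 1-a$. We then rerun the argument of Theorem \ref{so*-zeroout}(2). If $\La_q=1-a$, then $-a$ sits in the tail right after the $q$--string, which contradicts the gap condition. Otherwise the $q$--string is $(-a+r,\dots,-a,\dots,-a-l)$ with $-a+r<0$. The gap condition on the tail and the requirement that all other coordinates be negative then force $a-r=|\La^{\dom}_n|$. They also force $(\La^{\dom}_1,\dots,|\La^{\dom}_n|)$ to end in the string $a+l,\dots,a,a,\dots,a-r$, which is condition (i). The parameter is then uniquely $(a;-\text{rest in decreasing order})$, as in \eqref{canda}. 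The number of sign changes is $n-1$ if $\La^{\dom}_n>0$ and $n-2$ if $\La^{\dom}_n<0$, which gives condition (ii).

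For (3), the candidate $(-|\La^{\dom}_n|,-\La^{\dom}_{n-1},\dots,-\La^{\dom}_1)$ is always unitary, in either the $p$-- or the $q$--case, and its number of sign changes is $n$ or $n-1$. This yields (i). Every other unitary candidate is in the $q$--case and has the form $(a;-b,\dots,-b-r;\dots)$ with $a>0$, $a\ge 2-b$ and $a\le b+r+1$. Regularity forces $a\notin\{b,\dots,b+r\}$, so either $a$ is the smallest absolute value or $a=b+r+1$ with $b=|\La^{\dom}_n|$. In both cases $\La$ starts with an element of the final string $S$, followed by the negatives of the remaining values. Conversely, each such choice satisfies the conditions. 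Every such parameter has $n-1$ sign changes if $\La^{\dom}_n>0$ and $n-2$ if $\La^{\dom}_n<0$. The case $a=|\La^{\dom}_n|$ matches this count as well, since for $\La^{\dom}_n<0$ the first coordinate $|\La^{\dom}_n|$ then counts as a change. This gives (ii). Note that the two parity conditions in (3)(i) and (3)(ii) are complementary, so the candidate of (i) is never a conjugate in case (ii).

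The main obstacle is only checking that half-integrality does not create an edge case in the integer argument. Specifically, $-a+r<0$ must still give $a-r\ge\half$, so that $a-r$ is the minimal absolute value, and the bound $a\ge 2-b$ together with $a<b$ must still force $a=|\La^{\dom}_n|$. Both follow because $0$ is not a coordinate, so the integer proof transfers verbatim.
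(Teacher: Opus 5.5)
Your proposal is correct and follows the same route as the paper, which omits this proof because it is the proof of Theorem \ref{so*-zeroout} with Propositions \ref{so*-case-1-half-integral}(1) and \ref{so*-case-2-half-integers}(1) substituted for their integer versions. Two small precisions: first, for $\La=(|\La^{\dom}_n|;\dots)$ with $\La^{\dom}_n<0$ the number of sign changes is $n$, which agrees with $n-2$ only in parity; second, positivity of $a$ is automatic for half-integers, and the real use of the hypothesis excluding two coordinates of absolute value $\half$ is that it guarantees $a\ge\frac32$ in (2), which the candidate needs in order to satisfy $\La_1\ge\La_2+2$.
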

	\begin{proof}
		The proof follows the same arguments as the proof of Theorem \ref{so*-zeroout} and is therefore omitted.
	\end{proof}
	
	\begin{thm}\label{so*-halfinteger-onehalfin}
		Assume that the coordinates of $\La^{\dom}$ are half-integers and that $\La^{\dom}$ contains two coordinates whose absolute value is equal to $\frac{1}{2}$. Let $x$ be either $\half$, or the maximal number such that the coordinates of $\La^{\dom}$ include $x, x, \ldots, \frac{1}{2}, \frac{1}{2}$ or $x, x, \ldots, \frac{1}{2}, -\frac{1}{2}$, such that $x>\half$. Let $u \ge 0$ be the maximal integer such that 
		$x+1, \ldots, x+u$ are coordinates of $\Lambda^{\mathrm{dom}}$ 
		(with $u = 0$ when $x+1$ is not a coordinate of $\Lambda^{\mathrm{dom}}$).
		
		\begin{enumerate}
			
			\item If among the coordinates of $\Lambda^{\mathrm{dom}}$ that are greater than $x+u$ 
			there is a repeated coordinate, then $\Lambda^{\mathrm{dom}}$ has no unitary conjugates.
			
			\item Suppose that among the coordinates of 
			$\Lambda^{\mathrm{dom}}$ greater than $x+u$ there are no repeated coordinates, and that among the coordinates 
			$x+2, \ldots, x+u$ the repeated ones are 
			$x+v_1, \ldots, x+v_r$ with $r \ge 2$.  
			Then $\Lambda^{\mathrm{dom}}$ has no unitary conjugates. 
			
			\item Suppose that among the coordinates 
			of $\Lambda^{\mathrm{dom}}$ greater than $x+u$ there are no repeated coordinates, and that among the coordinates
			$x+2, \ldots, x+u$ there is exactly one repeated 
			coordinate $x+v$ (with $2\leq v \leq u$). Then the only possible unitary conjugate of $\La^{\dom}$ is
			\[
			\Lambda 
			= (x+v;\ 
			x, x-1, \ldots, -x,\ -x-1, \ldots, -x-u;\ 
			-\Lambda^{\mathrm{dom}}_{n-(2x + u + 2)}, 
			\ldots, -\Lambda^{\mathrm{dom}}_{1})
			\]
			Moreover, this parameter is unitary if and only if the number of sign changes 
			from $\La^{\dom}$ to $\La$ 
			is even.
			In particular, if the number of sign changes is odd, then $\La^{\dom}$ has no unitary conjugates.
			
			\item If none of the coordinates greater than $x$ is repeated, 
			the only possible unitary parameters are of one of the following forms.
			\begin{itemize}
				\item[(i)] If $u>1$, then for every $a\in[1,u-1]$ there is a 
				possible unitary conjugate of $\La^{\dom}$ in the $q$-case:
				\begin{align*}
					\La=&(x + a + 1;\ 
					x, \ldots, -x,\ -x-1, \ldots, -x-a;\\
					&\qquad \qquad -x - a - 2, \ldots, -x - u,\ 
					-\Lambda^{\mathrm{dom}}_{n-(2x + u + 1)}, 
					\ldots, -\Lambda^{\mathrm{dom}}_{1}).
				\end{align*}
				If $a = u-1$, then the string $-x- a - 2, \ldots, -x-u$ is an empty string.  
				
				\item[(ii)] If $u\geq 1$, then there is a possible unitary conjugate of $\La^{\dom}$ in the $p$-case of the form
				\begin{align*}
					\Lambda 
					= (& x+1, x, x-1, \ldots, -x; \ -x-2, \ldots, -x-u,
					-\Lambda^{\mathrm{dom}}_{n-(2x + u + 1)}, 
					\ldots, -\Lambda^{\mathrm{dom}}_{1}).
				\end{align*}
				If $u=1$, then the string $-x-2, \ldots, -x-u$ is an empty string.
				
				\item[(iii)] For any $\La^{\dom}$  there is a possible unitary conjugate of $\La^{\dom}$ in the $p$-case
				\[
				\Lambda = ( x, x-1, \ldots, -x,\ -x-1, \ldots, -x-u;\ 
				-\Lambda^{\mathrm{dom}}_{n-(2x + u + 1)}, 
				\ldots, -\Lambda^{\mathrm{dom}}_{1}).
				\]
				If $u=0$, then the string $-x-1, \ldots, -x-u$ is an empty string.
			\end{itemize}
			In each of the cases above, a parameter of the indicated form is a unitary conjugate of $\La^{\dom}$ if and only if the number of sign changes from $\La^{\dom}$ to $\La$ is even.
			
		\end{enumerate}
	\end{thm}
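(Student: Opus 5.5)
The plan is to mirror the proof of Theorem \ref{so*-zeroin}, replacing Propositions \ref{so*-case-1} and \ref{so*-case-2} by their half-integral analogues, Propositions \ref{so*-case-1-half-integral} and \ref{so*-case-2-half-integers}. Following Remark \ref{candidates}, I treat as candidates all $\frk$-regular $\La$ whose coordinates have the same absolute values as those of $\La^{\dom}$. I first select the unitary ones and only at the end impose the parity of the number of sign changes. There is one new phenomenon compared with the integral case. Since $0$ is not a coordinate, $\pm\half$ are both present, and the string $x,x-1,\dots,-x$ uses up both copies of each of $\half,\dots,x$. As a result, some configurations forced in the integral case become impossible, and some become conditional on parity.

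\textbf{Repeated coordinates.} Any repeated coordinate $y>x$ must appear in $\La$ with both signs. By the two propositions, in both the $q$-case and the $p$-case the positive coordinates of a unitary $\La$ can only occur in restricted positions.
\begin{itemize}
\item In the $q$-case, the only positive coordinates are $\La_1$ and the part $x,\dots,\half$ at the left of the $q$-string.
\item In the $p$-case, the $p$-string either begins with $x$, or equals $(x+1,x,\dots,-x)$.
\end{itemize}
Hence at most one positive coordinate exceeds $x$ (namely $\La_1$ in the $q$-case, or $x+1$ in the $p$-case). From this, (1) and (2) follow as in Theorem \ref{so*-zeroin}.
\begin{itemize}
\item For (1), a repeated $y>x+u$ would have to be $\La_1$ in the $q$-case. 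Then $\La_1+\La_q\ge 2$, which is the same computation as before. In the $p$-case the only possibility would be $y=x+1\le x+u$, which is excluded.
\item For (2), two repeated coordinates cannot both be placed.
\end{itemize}
Unlike the integral case, the $p$-case option with a long rightmost string $x+a,\dots,-x$ is no longer available, because Proposition \ref{so*-case-2-half-integers}(2) allows only $a\le 1$. This is why (2) now yields nothing.

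\textbf{Case (3).} With the single repeated coordinate $x+v$ and $v\ge2$, the $p$-case is impossible, since $x+1\ne x+v$. So $\La_1=x+v$ in the $q$-case. The inequality $\La_1+\La_q\le1$ forces $\La_q\le -x-v+1$. The $q$-string starts at $x$ and must contain $-x-v$, since the gap condition forbids $-x-v$ from starting the tail. Therefore the string continues at least to $-x-v$, and by the maximality of $u$ it runs to $-x-u$. This gives the displayed form. It must also be checked that $\La_1\ge\La_2+2$, which holds because $v\ge2$.

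\textbf{Case (4).} Here no coordinate $>x$ is repeated, and I enumerate the three shapes.
\begin{itemize}
\item \emph{$q$-case, $\La_1\le x+1$.} This option does not occur, because $\La_1\ge\La_2+2=x+2$.
\item \emph{$q$-case, $\La_1\ge x+2$.} The $q$-string is $x,\dots,-x-a$ with $a\ge1$; the case $a=0$ would require $\La_1\le 1+x$. The argument is exactly as in Theorem \ref{so*-zeroin}(4)(a): we need $\La_1\in[x+2,x+a+1]$, and non-repetition forces $\La_1=x+a+1$ with $a\le u-1$. This gives (i).
\item \emph{$p$-case.} Proposition \ref{so*-case-2-half-integers}(2) yields either the string $(x+1,x,\dots,-x)$, which gives (ii) and requires $u\ge1$ together with the gap at $-x-1$, or a string beginning with $x$. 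In the latter case maximality of $u$ forces the string to be $x,\dots,-x-u$, giving (iii).
\end{itemize}

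The remaining tail in every case is forced to be the negatives of the remaining coordinates, arranged in decreasing order. This holds because all later coordinates must be negative; otherwise a positive coordinate outside the allowed positions would appear. Finally, each candidate is a $W$-conjugate exactly when the number of sign changes is even, since $W$ consists of permutations with an even number of sign changes. Because zero is absent here, no correction of the kind discussed in the footnote to Remark \ref{candidates} is needed.

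The main obstacle I expect is case bookkeeping. One must verify the gap conditions in the statements of the propositions (for example $\La_q>\La_{q+1}+1$ and $\La_1-p>\La_{p+1}$) for each displayed form, and make sure no other placements of $\pm(x+1)$ slip through.
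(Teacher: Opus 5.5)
Your proposal is correct and is essentially the paper's proof. Both transplant the argument of Theorem \ref{so*-zeroin} via Propositions \ref{so*-case-1-half-integral} and \ref{so*-case-2-half-integers}, use that at most one positive coordinate of a unitary $\La$ exceeds $x$ (which rules out (1) and (2) and forces $\La_1=x+v$ in (3)), enumerate the $q$-case and the two admissible $p$-strings in (4), and finally impose the parity of sign changes. Your write-up is slightly more explicit than the paper's, for instance in adapting part (1) to the half-integral $p$-case. One small correction: it is the gap condition $\La_q>\La_{q+1}+1$ (resp.\ $\La_p>\La_{p+1}+1$), not the maximality of $u$, that forces the string to reach all the way down to $-x-u$; maximality of $u$ only gives the bound in the other direction.
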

	\begin{proof}
		(1) The proof follows the same arguments as the proof of Theorem \ref{so*-zeroin} (1) and is
		therefore omitted.
		\smallskip
		
		(2)  As in Theorem~\ref{so*-zeroin} (2), we conclude that in this case there are no unitary parameters belonging to the \(q\)-case. Suppose that \(\Lambda\) is a unitary conjugate belonging to the \(p\)-case. By Proposition~\ref{so*-case-2-half-integers} (2), either the string
		\eq\label{string 1}
		x, x-1, \ldots, -x
		\eeq
		is the leftmost part of the \(p\)-string of $\La$, or
		\eq\label{string 2}
		(\Lambda_1,\dots,\Lambda_p)
		=
		\left(
		x+1, x, x-1, \ldots, \tfrac{1}{2}, -\tfrac{1}{2}, \ldots, -x
		\right).
		\eeq
		However, \eqref{string 1} cannot form the leftmost part of the \(p\)-string, since there are repeated coordinates of \(\Lambda^{\mathrm{dom}}\) greater than \(x\). So \eqref{string 2} must hold, but  this contradicts the assumption that there are at least two repeated coordinates greater than \(x\), and both of these must occur in \(\Lambda\) with both the plus and the minus sign.
		\smallskip
		
		(3) Assume that $\La$ is a unitary conjugate of $\La^{\dom}$. Since $x + v$ is a repeated coordinate, it must appear in $\La$ with both $+$ and the $-$ sign. 
		
		If $\La$ is in the $p$--case, then by Proposition~\ref{so*-case-2-half-integers} (2), either the string $x, x-1, \ldots, -x$ is the leftmost part of the $p$--string or \eqref{string 2} holds.
		Both cases lead to a contradiction, since the coordinate $x + v$ must appear in $\La$ and $v \neq 1$. Therefore, there are no unitary conjugates in the $p$--case. 
		
		If $\Lambda$ is a unitary parameter in the $q$--case, then by Proposition \ref{so*-case-1-half-integral}, the string $x, x-1, \ldots, -x$ is the leftmost part of, but not the entire string,
		$\Lambda_2, \ldots, \Lambda_q$, and
		\[
		\Lambda_1 \in \{\Lambda_2 + 2, \ldots, 1 - \Lambda_q\}.
		\]
		Since $x+v$ is a repeated coordinate, it must appear in $\Lambda$ with both 
		$+$ and the $-$ sign. Therefore, the parameter is of the form
		\[
		\Lambda = \bigl(
		x+v;\;
		x, x-1, \ldots, -x,\;
		-x-1, \ldots, -x-u;\;
		-\Lambda^{\mathrm{dom}}_{n-(2x+u+2)}, \ldots, -\Lambda^{\mathrm{dom}}_{1}
		\bigr).
		\]
		(4) Suppose that none of the coordinates greater than $x$ is repeated.
		
		If $\Lambda$ is a unitary parameter in the $q$--case and $u > 0$, then by Proposition \ref{so*-case-1-half-integral}, the 
		$q$--string $\La_2,\dots,\La_q$ is of the form 
		\[
		x, x-1, \ldots, -x,\ -x-1, \ldots, -x-a,
		\qquad \text{for some}\quad 1 \le a \le u.
		\]
		Furthermore, $\Lambda_1 + \Lambda_q \le 1$, so we have 
		\[
		x + 2 \le \Lambda_1 \le x + a + 1.
		\]
		Since none of the coordinates $x+2, \ldots, x+a$ is a repeated coordinate of 
		$\Lambda^{\mathrm{dom}}$, we must have 
		\[
		\Lambda_1 \notin \{x+2, \ldots, x+a\}.
		\]
		Therefore $\Lambda_1 = x + a + 1$ for some $1 \le a < u$ 
		(the case $a = u$ is impossible, since $x + u + 1$ is not a coordinate of 
		$\Lambda^{\mathrm{dom}}$), and $\La$ is as in (a), i.e., 
		\begin{align*}
			\Lambda 
			= &(x + a + 1;\ 
			x, \ldots, -x,\ -x-1, \ldots, -x-a;\\
			&\qquad -x - a - 2, \ldots, -x - u,\ 
			-\Lambda^{\mathrm{dom}}_{n - (2x + u + 1)}, 
			\ldots, -\Lambda^{\mathrm{dom}}_{1}).
		\end{align*}
		If $a = u - 1$, then the string $-x - a - 2, \ldots, -x - u$ is empty.
		
		If $u = 0$, then there are no unitary conjugates of $\La^{\dom}$ in the $q$--case.
		
		If $\Lambda$ is a unitary conjugate of $\La^{\dom}$ belonging to the $p$--case, then by Proposition \ref{so*-case-2-half-integers}, the string 
		$x, x-1, \ldots, -x$ is either the leftmost part of the 
		$p$--string $\La_1,\dots,\La_p$, or 
		\[
		(\Lambda_1,\dots,\Lambda_p)
		=
		\left(
		x+1, x, x-1, \ldots, \tfrac{1}{2}, -\tfrac{1}{2}, \ldots, -x
		\right).
		\]
		
		If the $p$--string is $x+1, x, x-1, \ldots, \tfrac{1}{2}, -\tfrac{1}{2}, \ldots, -x$, then  $\Lambda$ is as in (b):
		\begin{align*}
			\Lambda 
			= (& x+1, x, x-1, \ldots, -x;\ -x-2, \ldots, -x-u, \ 
			-\Lambda^{\mathrm{dom}}_{n-(2x + u + 1)}, 
			\ldots, -\Lambda^{\mathrm{dom}}_{1}).
		\end{align*}

		If the string $x, x-1, \ldots, -x$ is the leftmost part of the $p$--string, 
		then $\Lambda$ is as in (c):
		\begin{align*}
			\Lambda 
			= (&x, x-1, \ldots, -x,\ -x-1, \ldots, -x-u;\ 
			-\Lambda^{\mathrm{dom}}_{n-(2x + u + 1)}, 
			\ldots, -\Lambda^{\mathrm{dom}}_{1}).
		\end{align*}
	\end{proof}
	\begin{ex}{\rm 
			Let $\La^{\dom} = \left ( \frac{13}{2}, \frac{9}{2}, \frac{7}{2}, \frac{5}{2}, \frac{5}{2}, \frac{3}{2}, \frac{3}{2}, \frac{1}{2}, - \frac{1}{2} \right )$. Here $x = \frac{5}{2}$, $u = 2$ and from Theorem \ref{so*-halfinteger-onehalfin} it follows that the candidates for the unitary conjugates of $\La^{\dom}$ are
			\begin{align*}
				& \left (  \frac{9}{2}; \frac{5}{2},  \frac{3}{2}, \frac{1}{2}, - \frac{1}{2}, -\frac{3}{2}, -\frac{5}{2}, -\frac{7}{2}; -\frac{13}{2}\right ), \\
				& \left (  \frac{7}{2}, \frac{5}{2},  \frac{3}{2}, \frac{1}{2}, - \frac{1}{2}, -\frac{3}{2}, -\frac{5}{2}; -\frac{9}{2}, -\frac{13}{2}\right ), \\
				& \left (  \frac{5}{2},  \frac{3}{2}, \frac{1}{2}, - \frac{1}{2}, -\frac{3}{2}, -\frac{5}{2}, -\frac{7}{2}, -\frac{9}{2};  -\frac{13}{2}\right ).
			\end{align*}
			Since the first two have an even number of sign changes with respect to $\Lambda^{\mathrm{dom}}$, they  are unitary conjugates of $\La^{\dom}$. 
			The third one is not a unitary conjugate of $\La^{\dom}$, since the number of sign changes is odd.
		}
	\end{ex}

\end{document}